\theoremstyle{plain}
\newtheorem{theorem}{Theorem}[section]
\newtheorem{lemma}{Lemma}[section]
\theoremstyle{definition}
\numberwithin{equation}{section}
\begin{document}

\title{Global classical solutions to
 a  two-dimensional chemotaxis-fluid system involving signal-dependent degenerate diffusion}

\author{Yansheng Ma\\
School of Mathematics and Statistics, Northeast Normal University\\
Changchun, 130024, People's Republic of China
\\
  Peter Y.~H.~Pang\\
Department of Mathematics, National University of Singapore\\
10 Lower Kent Ridge Road, Republic of Singapore 119076
\\
Yifu Wang\thanks{
Corresponding author. Email: {\tt wangyifu@bit.edu.cn}}\\
School of Mathematics and Statistics, Beijing
Institute of Technology\\
Beijing 100081, People's Republic of China}
\date{}
\maketitle
\vspace{0.0cm}
\noindent
\begin{abstract}
This paper is concerned with the two-dimensional chemotaxis-fluid model
\begin{equation*}
\begin{cases}
n_t+u\cdot\nabla n=\Delta (n\phi(v))+\mu n(1-n),\\
v_t+u\cdot\nabla v=\Delta v-nv,\\
u_t+ \kappa (u\cdot\nabla) u=\Delta u+n\nabla\Phi-\nabla P, \quad\nabla\cdot u=0,
\end{cases}
\end{equation*}
  accounting for signal-dependent motilities of microbial populations
 interacting with  an incompressible liquid  through transport and buoyancy, where the suitably smooth function $\phi$ satisfies $\phi>0$ on $(0,\infty)$ with $\phi(0)=0$ and $\phi'(0)>0$, and the parameter $\mu\geq 0$.
For all reasonably regular initial data, if $\mu=0$,
the  corresponding initial boundary value problem possesses  global classical solutions with a smallness condition on $\int_\Omega n_0$; whereas  if $\mu>0$, this problem possesses global bounded classical solutions, which can converge toward (1,0,0) as time tends to infinity when a
certain small mass is imposed on the initial data $v_0$. These results extend recent results for the fluid-free system to one in a Navier-Stokes fluid environment.

\end{abstract}

\vspace{0.3cm}
\noindent {\bf\em Key words:}~Chemotaxis consumption model,  signal-dependent motility, Navier-Stokes, global existence.

\noindent {\bf\em 2020 Mathematics Subject Classification}:~35K55, 35K65, 35B40, 35Q35, 92C17.

\newpage
\section{Introduction}

This study is concerned with the chemotaxis-fluid model
\begin{equation}\label{1.1}
\begin{cases}
n_t+u\cdot\nabla n=\Delta (n\phi(v))+\mu n(1-n),\\
v_t+u\cdot\nabla v=\Delta v-nv,\\
u_t+ \kappa (u\cdot\nabla) u=\Delta u+n\nabla\Phi-\nabla P, \quad\nabla\cdot u=0,
\end{cases}
\end{equation}
for the  behavior of a microbial population
 interacting with an incompressible fluid through transport and buoyancy (\cite{Tuval(2005)}). In this model, the motility of the microbial population with density $n$ is assumed to depend on the concentration $v$ of a certain signal substance consumed by $n$ upon contact. As such, this study makes further progress on
recent interest and advances in the modeling literature addressing bacterial movement
in situations where cell motility is biased by chemical cues (\cite{FTLH,Liu}).
In addition to the signal-induced motility, the microbial population $n$ is assumed to interact with its fluid environment through the fluid's velocity field
$u$ and associated pressure $P$ via buoyant forces mediated by the given gravitational potential $\Phi =\Phi(x)$, while both the microbial population and the signal substance are subject to convection.

In contrast with the prevalent literature on chemotaxis-fluid systems (\cite{WinklerTAMS,WinklerCPDE,Cao}), there is an intriguing interrelation between the random diffusion and cross-diffusion in the system (1.1). Indeed, when splitting the nonlinear second order operator
 $\Delta (n\phi(v))$ into the diffusive part  $\nabla\cdot(\phi(v)\nabla n)$  and
 the cross-diffusive contribution $-\nabla\cdot(\psi(v) n\nabla v)$,
 one can see that the migration rate is precisely linked through the relation $\psi(v)=-\phi'(v)$, which indicates
 that the microbial population achieves its signal sensing in a distinctly local manner (\cite{Liu,FTLH}).

In order to understand the implication of this link between random diffusion and cross-diffusion,
we start with the extensive literature on the Neumann
problem for the Keller-Segel model
with signal-dependent motility 
\begin{equation}\label{1.2a}
\begin{cases}
n_t=\Delta (n\phi(v))+\mu n(1-n),\\
v_t=\Delta v-v+n,\\
\end{cases}
\end{equation}
in a smooth bounded domain $\Omega\subset \mathbb{R}^d$, $d\geq1$. For instance, when $ \mu=0$,  the motility function $\phi\in C^3((0,\infty))$  and has strictly positive upper and lower bounds,
the existence of a unique globally bounded classical solution  to \eqref{1.2a} was shown in \cite{TaoM3AS,XiaoJDE}. In particular,
a critical-mass phenomenon occurs for \eqref{1.2a} with $\phi(v) =e^{-\chi v}$ in the two-dimensional case (\cite{Fujie,JinPAMS}).
We remark that with certain initial data of super-critical mass, the
solution of \eqref{1.2a} becomes unbounded at time infinity, which differs from the finite-time blowup
behavior of the classical Keller-Segel model. When $\mu>0$ and $\displaystyle\lim_{v\rightarrow \infty} \phi(v)=0$, the dampening effect on the prevention of blow-up of \eqref{1.2a}
was  detected in \cite{JinWang,Xiao}.

When the signal substance is not produced but rather consumed by the microbial population, the signal-dependent model \eqref{1.2a} becomes
\begin{equation}\label{1.3a}
\begin{cases}
n_t=\Delta (n\phi(v))+\mu n(1-n),\\
v_t=\Delta v-nv,\\
\end{cases}
\end{equation}
which is the fluid-free counterpart of \eqref{1.1}.
 In contrast to \eqref{1.2a},  a distinguishing core hypothesis  underlying \eqref{1.3a} is that the signal $v$ is degraded  by the microbial cells upon contact and becomes substantially dimmed down. Accordingly,  the degeneracy near $v=0$ by $\phi(0)=0$
affects the solution behavior of \eqref{1.3a} at large time, possibly leading to the formation of patterns, such as structure-supporting features observed in some contexts of bacterial migration in nutrient-poor environments \cite{Leyva,Matsushita}. Indeed, in some non-degenerate cases determined by the strict positivity of 
the regular motility function $\phi$ on $[0,\infty)$,  solutions to the associated  Neumann
problem of \eqref{1.3a}  with $\mu=0$ stabilize to spatially homogeneous steady states (\cite{LiWinkler,PL}), whereas for degenerate motility functions
$\phi$ such as $\phi(v)=v^\alpha$ with suitable $ \alpha\geq 1$, the large time limits of the first solution component of \eqref{1.3a} with $\mu=0$ may deviate from being constant for some initial data \cite{WinklerAIHP,WinklerNonlinearity,LiLou}.

From the perspective of mathematical analysis, the degeneracy of signal-dependent type
brings about noticeable challenges
 already at the level of basic existence issue, as the smoothing effects of diffusion and the potentially destabilizing action of cross-diffusion are reduced simultaneously at small signal concentration in this circumstance. In this vein,
we note that the approaches coping with the diffusion-inhibiting mechanisms in \eqref{1.3a} are quite different from those for degeneracies of porous medium type. Indeed, since the degeneracy of the motility function $\phi$ in \eqref{1.3a}
 reduces a priori information on regularity to a significant extent,  the derivation of crucial associated estimates
requires us to take into account the intricate spatio-temporal nonlocal character of the cross-degeneracy in \eqref{1.3a}.
 Accordingly, even the issues on the global solvability of \eqref{1.3a} and its close variants seem far from evident, and some results on global solvability could only be achieved  in certain generalized frameworks by far, despite the fact that global bounded and even continuous solutions could be constructed in one- and two-dimensional
domains when the  degeneracy of $\phi$  near $0$ is sufficiently mild
 (\cite{WinklerM3AS,Winkleradv,WinklerZAMP,Ligenglin,Liangwang}).
For example,
when  $\phi(\xi)\sim \xi^\alpha $ for small values $\xi>0$, the existence of global very weak solutions
 of \eqref{1.3a}
  was proven for
all $\alpha>0$   in \cite{WinklerZAMP}. Furthermore, bounds
of  $ \int^T_0\int_{\Omega} n^2 \phi(v)$, the crucial regularity features of solutions to the associated
 problem of \eqref{1.3a}, were derived by exploiting the identity
\begin{equation}\label{1.4}
\partial_t B^{-1}n+ n\phi (v)= B^{-1}( n\phi (v)) \end{equation}
 where $B$ denotes a suitable realization of $-\Delta+1$.

 Recently some  progress has been achieved in the global solvability of  \eqref{1.2a} coupled with the Navier-Stokes equations
   \begin{equation}\label{1.5}
\begin{cases}
n_t+u\cdot\nabla n=\Delta (n\phi(v)),\\
v_t+u\cdot\nabla v=\Delta v+n-v,\\
u_t+ \kappa (u\cdot\nabla) u=\Delta u+n\nabla\Phi-\nabla P, \quad\nabla\cdot u=0,
\end{cases}
\end{equation}
in certain generalized sense, where the motility function $\phi$ is positive on $(0,\infty)$ but may decay algebraically at
large values (\cite{Tian,WinklerADE}).
 Due to the additional coupling to
the incompressible Navier-Stokes system, it seems that
 the analytical approach based on the corresponding
fluid perturbation of \eqref{1.4}
\begin{equation}\label{1.6}
\partial_t B^{-1}n+ n\phi (v)= B^{-1}( n\phi (v))-  B^{-1}(u\cdot\nabla n)
\end{equation}
is  no longer available.  Indeed, along with  the embedding inequality
$$\|B^{-1} \nabla \varphi
\|_
{
L^{\frac{2p}{2-p}}(\Omega)
}\leq C(p)\|\varphi\|_
{
L^{p}(\Omega)
}$$
in two dimensional  settings,
a priori knowledge on fluid regularity in the Stokes evolution equations (i.e. $\kappa=0$) allows the
contribution of $ B^{-1}(u\cdot\nabla n)$ to be suitably controlled by the corresponding  diffusive action
(see the proof of \cite [Lemma 4.3] {Tian}). However, as pointed in \cite{Winklerpreprint,WinklerADE}, in the case of $\kappa=1$,
available
a priori knowledge on fluid regularity seems insufficient to provide beneficial information about
 $ B^{-1}(u\cdot\nabla n)$; in particular, it is unclear to what extent
 $ \int_\Omega |B^{-\frac 12} n|^2 $
 retains crucial quasi-energy properties that
   provide bounds for $ \int^T_0\int_{\Omega} n^2 \phi(v)$.

 To the best of our knowledge, none of the results available so far seems applicable to
the chemotaxis-Navier-Stokes system with  degenerate motility function.
 The objective of this paper is to undertake a first step toward the qualitative analysis of \eqref{1.1}.

We shall consider \eqref{1.1} in a smooth bounded domain $\Omega\subset \mathbb{R}^2$ with the
 boundary conditions
  \begin{equation}\label{1.7}
 \frac{\partial n}{\partial \nu}=\frac{\partial v}{\partial \nu}=0\quad\hbox{and}\quad
  u=0\quad \hbox{for}\, x\in \partial\Omega \quad\hbox{and}\quad t>0,
\end{equation}
 and initial conditions
 \begin{equation}\label{1.8}
 n(x,0)=n_0(x),\,v(x,0)=v_0(x),\,u(x,0)=u_0(x),\quad x\in \Omega.
\end{equation}
We will assume throughout that the initial data satisfy the technically motivated regularity and positivity requirements
\begin{equation}\label{1.9}
\left\{\begin{array}{l}
n_0\in C^\alpha(\bar{\Omega})\mbox{ for some }\alpha\in(0,1),\mbox{ with }n_0\geq0\mbox{ and }n_0\not\equiv0\mbox{ in }\bar{\Omega},\\
v_0\in W^{1,\infty}(\Omega)\mbox{ with }v_0>0\mbox{ in }\bar{\Omega},\\
u_0\in D(A^\beta)\mbox{ for some }\beta\in(\frac{1}{2},1),
\end{array}
\right.
\end{equation}
where $A=-\mathcal{P}\Delta$ denotes the realization of the Stokes operator in $L^2(\Omega;\mathbb{R}^2)$ defined on the domain $D(A):=W^{2,2}(\Omega;\mathbb{R}^2)\cap W_0^{1,2}(\Omega;\mathbb{R}^2)\cap L_\sigma^{2}(\Omega;\mathbb{R}^2)$, with $L_\sigma^2(\Omega;\mathbb{R}^2):=\{\varphi\in L^{2}(\Omega;\mathbb{R}^2)|\nabla\cdot\varphi=0\}$ and $\mathcal{P}$ being the Helmholtz projection of $L^2(\Omega;\mathbb{R}^2)$ onto $L_\sigma^2(\Omega;\mathbb{R}^2)$.

As to the parameter functions in  \eqref{1.1}, we shall suppose that
 the given gravitational potential function $\Phi$ fulfills
$\Phi\in W^{2,\infty}(\Omega)$ and the motility function
\begin{equation}\label{1.10}
\phi\in C^1[0,\infty)\cap C^3(0,\infty)\, \, \hbox{with } \,    \phi(0)=0, \phi'(0)>0\,  \,   \hbox {and}  \,  \,  \phi>0\,  \,   \, \hbox{on}\,(0,\infty).
 \end{equation}

Our main result asserts the global solvability of \eqref{1.1} \eqref{1.7} \eqref{1.8}. A natural next step might comprise describing the large-time behavior
of the solutions in the absence of a logistic source.
For example, the results from \cite{WinklerAIHP}
 suggest that  for the case of  $\mu=0$, the solution component $n$ of  \eqref{1.1} \eqref{1.7} \eqref{1.8}
 might stabilize toward inhomogeneous states in the large
time limit.

Alas, up to now, in contrast to the availablity of results on large-time behavior of solutions to the Keller-Segel type chemotaxis system in the literature,
corresponding results for the chemotaxis system involving signal-dependent degeneracy appear scant and available only in some particular cases under smallness conditions on the initial data even in the one-dimensional setting (\cite{WinklerAIHP,PL,WinklerTAMS1}). The difficulty in the latter situation is that the degeneracy of the signal-dependent motility
reduces the a priori regularity of the solutions; as a consequence, investigations of asymptotic behavior via Lyapunov functions are no longer accessible.
In the chemotaxis-fluid context, the situation is further complicated by the fact that the estimates involving $u$ have unfavorable time dependence.
Thus, a full treatment of the large-time behavior seems to require significant further
efforts and hence goes beyond the scope of the present study. We focus our attention on boundedness instead.

Our main result then reads as follows:
\begin{theorem}
	Let $\Omega\subset\mathbb{R}^2$ be a bounded domain with smooth boundary and $\kappa=1$.
Assume that the motility function $\phi$ fulfills \eqref{1.10} and the initial data satisfy
\eqref{1.9}.

(i) If $\mu=0$, then for all $K>0$ there is $\delta(K)>0$ with  the property that
 whenever $\|v_{0}\|_{L^{\infty}(\Omega)}<K$ and
	\begin{equation}\label{1.11}
		\int_{\Omega}n_0< \delta(K),
	\end{equation}
  \eqref{1.1} \eqref{1.7} \eqref{1.8}  possesses a global classical solution;

(ii) If $\mu>0$, then \eqref{1.1} \eqref{1.7} \eqref{1.8}   possesses a global classical solution, which is bounded in the sense that for all $p>1$, there exists some $C(p)>0$ such that
 \begin{equation}\label{1.12}
	\sup_{t>0}\left\{\|n(\cdot,t)\|_{L^p(\Omega)}\!
+\!\|v(\cdot,t)\|_{W^{1,\infty}(\Omega)}+\|A^\beta u(\cdot,t)\|_{L^2(\Omega)}\right\}\leq C(p).
\end{equation}
 Furthermore, for all $K>0$ there is $\delta(K)>0$ with  the property that
 whenever $\|v_{0}\|_{L^{\infty}(\Omega)}<K$ and
	\begin{equation}\label{1.13}
		\int_{\Omega} v_0< \delta(K),
	\end{equation}
then we have $$
\displaystyle\lim_{ \overline{t\rightarrow \infty}} \|n(\cdot,t)-1\|_{L^2(\Omega)}= 0,\quad
\displaystyle\lim_{t\rightarrow \infty} \|v(\cdot,t)\|_{W^{1,\infty}(\Omega)} =0, $$
as well as
$$
\displaystyle\lim_{ \overline{t\rightarrow \infty}} \|u(\cdot,t)\|_{W^{1,2}(\Omega)}=0.
$$
\end{theorem}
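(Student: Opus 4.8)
The plan is to construct solutions by an approximation scheme that regularizes the degeneracy of $\phi$ near $v=0$, replacing $\phi$ with a family $\phi_\varepsilon \in C^3([0,\infty))$ satisfying $\phi_\varepsilon \geq \varepsilon$ and $\phi_\varepsilon \to \phi$ locally uniformly, and then deriving $\varepsilon$-independent a priori bounds sufficient to pass to the limit. For the regularized problems, local existence of classical solutions follows from standard fixed-point arguments for chemotaxis-Navier-Stokes systems, together with an extensibility criterion quantified in suitable norms; the main task is global-in-time $\varepsilon$-uniform estimates. The foundational observations are mass control of $n$ (conserved when $\mu=0$, bounded when $\mu>0$ via the logistic damping $\mu n(1-n)$), the pointwise bound $0<v\leq \|v_0\|_{L^\infty(\Omega)}$ from the maximum principle applied to the $v$-equation (using $n\geq 0$), and the $L^2$ smallness/decay of $v$ obtained by testing the $v$-equation against $v$ and exploiting $\int_\Omega n v \geq 0$; the latter is what makes the smallness hypotheses \eqref{1.11} and \eqref{1.13} usable, since near-zero $v$ is where $\phi$ degenerates and where cross-diffusion is also weak.

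The heart of the argument is a coupled functional inequality bootstrapping $\int_\Omega n^p$, $\int_\Omega |\nabla v|^{2q}$, and $\int_\Omega |A^\beta u|^2$ (equivalently $\|u\|_{W^{1,2}}$ together with a higher Sobolev bound via the Stokes semigroup). I would test the $n$-equation with $n^{p-1}$: the diffusion term yields $-c_p\int_\Omega \phi_\varepsilon(v) n^{p-2}|\nabla n|^2$, the cross-diffusive part $\nabla\cdot(\phi_\varepsilon'(v) n\nabla v)$ contributes a term controlled by $\int_\Omega \phi_\varepsilon'(v)^2 n^p |\nabla v|^2 / \phi_\varepsilon(v)$ after Young's inequality against the good diffusion term — here $\phi'(0)>0$ and $\phi(v)\sim \phi'(0)v$ near $0$ keep the quotient $\phi'(v)^2/\phi(v)$ bounded, which is the structural reason the "distinctly local" sensing relation $\psi=-\phi'$ is benign — and the convective term drops by $\nabla\cdot u=0$. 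For the $v$-gradient I would use the standard identity for $\frac{d}{dt}\int_\Omega |\nabla v|^{2q}$, handling the boundary term via $\partial_\nu|\nabla v|^2\leq 2\Lambda|\nabla v|^2$ on $\partial\Omega$, the $-nv$ term by its sign structure after integration by parts (producing $\int_\Omega v\nabla n\cdot\nabla(|\nabla v|^{2q-2})$-type terms absorbed using the $L^\infty$ bound on $v$), and the convective term $u\cdot\nabla v$ by the $W^{1,2}$-bound on $u$ and Gagliardo-Nirenberg in two dimensions. For $u$, I would use the fractional-power Stokes estimate $\|A^\beta u(t)\|_{L^2}\leq \dots + \int_0^t \|A^\beta e^{-(t-s)A}\mathcal{P}(n\nabla\Phi)\|_{L^2}\,ds$ with $\|A^\beta e^{-\tau A}\|\lesssim \tau^{-\beta}e^{-\lambda\tau}$, so that $\|u\|_{W^{1,2}}$ is controlled by $\sup_s \|n(s)\|_{L^p}$ with $p$ chosen via Sobolev so that $D(A^\beta)\hookrightarrow W^{1,2}$; this breaks the circularity provided the $n$- and $v$-estimates close among themselves, and this is exactly where the smallness of $\int_\Omega n_0$ (resp. of $\int_\Omega v_0$, controlling $\|v\|_{L^2}$ and hence the cross-diffusion feedback) is invoked to render the relevant coefficient in the differential inequality subcritical, so that a Grönwall-type argument on the sum $\int_\Omega n^p + \int_\Omega |\nabla v|^{2q} + 1$ yields a bound on any finite interval (case (i)) or a uniform-in-time bound (case (ii), using additionally the logistic absorption $-\mu\int_\Omega n^{p+1}$).

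Once $\varepsilon$-uniform bounds on $\|n\|_{L^p}$, $\|v\|_{W^{1,q}}$, $\|u\|_{D(A^\beta)}$ are in hand, parabolic and Stokes regularity (Schauder and $L^p$-theory, using $\phi(v)\geq \phi'(0)\min_\Omega v(\cdot,t)/2 > 0$ on compact time intervals away from any point where $v$ might vanish — and in fact $v>0$ is preserved so the limit problem is non-degenerate on $(0,T)$ for each $T$ under our hypotheses) upgrades these to Hölder and then classical bounds, and a diagonal/compactness extraction produces a classical solution of \eqref{1.1}; uniqueness and the stated boundedness \eqref{1.12} follow from the same estimates. For the convergence statement in case (ii), I would combine the uniform bounds with the logistic quasi-energy: testing the $n$-equation with $n-1$ and the $v$-equation suitably gives $\int_1^\infty \int_\Omega (n-1)^2 < \infty$ and $\int_1^\infty\int_\Omega |\nabla v|^2 + \int_1^\infty\int_\Omega n v<\infty$ type decay integrals, whence along a sequence $t_k\to\infty$ one gets $\|n(t_k)-1\|_{L^2}\to 0$ and $\|v(t_k)\|_{W^{1,\infty}}\to 0$ (the latter in fact for all $t$, by exponential decay of $\|v\|_{L^2}$ together with parabolic smoothing and the uniform higher bounds), and then $u(t_k)\to 0$ in $W^{1,2}$ follows from the Stokes Duhamel formula since its forcing $n\nabla\Phi$ approaches $\nabla\Phi$, whose Helmholtz projection vanishes — so $\mathcal{P}(n\nabla\Phi)\to 0$ in the relevant sense, forcing $A^\beta u\to 0$. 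The main obstacle, and the reason for the $\limsup$ (written $\lim_{\overline{t\to\infty}}$) rather than $\lim$ in the $n$- and $u$-statements, is precisely the degeneracy: without a Lyapunov functional one cannot upgrade sequential convergence to full convergence for $n$, and the time-dependent (non-uniform in $t$ on $(0,\infty)$ a priori) nature of the fluid estimates when $\kappa=1$ is what must be tamed by the uniform bounds of case (ii) before the decay integrals can be exploited.
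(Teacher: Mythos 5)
Your proposal founders on the central structural difficulty of the problem. You claim that because $\phi'(0)>0$ and $\phi(v)\sim\phi'(0)v$ near $0$, the quotient $\phi'(v)^2/\phi(v)$ stays bounded, so that the cross-diffusive contribution can be absorbed into the diffusion term by Young's inequality. This is false: by Lemma \ref{lemma2.2} one only has $|\phi'(v)|^2/\phi(v)\leq\Lambda^2(K)/(\lambda(K)v)$, which blows up like $1/v$ exactly where the diffusion coefficient $\phi(v)\sim v$ degenerates. Testing the $n$-equation with $n^{p-1}$ therefore leaves the term $\int_\Omega \frac{n^p}{v}|\nabla v|^2$ against the weak dissipation $\int_\Omega v\,n^{p-2}|\nabla n|^2$, and this mismatch of weights cannot be resolved by Young's inequality alone. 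The paper's entire machinery exists to handle this: the singularly weighted functionals $\int_\Omega\frac{|\nabla v|^2}{v}$ and $\int_\Omega\frac{|\nabla v|^4}{v^3}$ (Lemmas \ref{lemma3.2} and \ref{lemma4.6}), the interpolation inequality of Lemma \ref{lemma2.4} whose absorption coefficient is proportional to $\int_\Omega n=\int_\Omega n_0$ (this is precisely where the smallness \eqref{1.11} enters for $\mu=0$), and the quasi-entropy $\mathcal{F}_1$ coupling these with $\int_\Omega|u|^2$. Your unweighted functional $\int_\Omega|\nabla v|^{2q}$ does not see this structure, and your account of where the smallness of $\int_\Omega n_0$ is used ("rendering a coefficient subcritical" in a Gr\"onwall argument on $\int n^p+\int|\nabla v|^{2q}$) does not correspond to an estimate that closes. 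Separately, after obtaining $L^p$ bounds you still need a \emph{quantitative} positive lower bound for $v$ on $(0,T_{max})$ (Lemma \ref{lemma3.9}, via the equation for $-\ln v$ and Orlicz-space parabolic regularity) before Moser iteration can give $\|n\|_{L^\infty}$; mere pointwise positivity of $v$, which is all you invoke, is not enough near a hypothetical blow-up time.

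For part (ii), your asymptotics argument has a second gap. The claimed exponential decay of $\|v\|_{L^2}$ is unjustified: testing the $v$-equation with $v$ gives $\frac{d}{dt}\int_\Omega v^2+\int_\Omega|\nabla v|^2=-2\int_\Omega nv^2\leq0$, and since $\overline{v}\neq0$ the Poincar\'e inequality yields no decay; decay of $v$ must come from the consumption term, which requires a lower bound on $n$. The paper obtains this in Lemma \ref{lemma4.7} by testing the $n$-equation with $n^{-1}$ and showing that the time average of $\|n(\cdot,t)\|_{L^1}$ stays above $|\Omega|/3$, \emph{provided} $\|v_0\|_{L^1}$ is small enough that the term $\frac{\Lambda^2(K)}{2\lambda(K)}\int_\Omega\frac{|\nabla v|^2}{v}\leq C\left\{\int_\Omega v_0\right\}^{1/2}$ does not overwhelm $\mu|\Omega|$. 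This is the actual role of the hypothesis \eqref{1.13}, which your proposal misattributes to controlling "cross-diffusion feedback". Without this lower bound on $n$, the chain $\|v\|_{L^1}\to0\Rightarrow\|v\|_{W^{1,\infty}}\to0\Rightarrow\int_\Omega\frac{|\nabla v|^2}{v}\to0\Rightarrow\limsup\|n-1\|_{L^2}\to0$ never gets started, and the convergence statements of Theorem 1.1(ii) are not reached.
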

\vskip1mm

It is noted that when the motility function $\phi$ in \eqref{1.1} satisfies
$\phi\in  C^3[0,\infty), \phi>0$ on $[0,\infty)$, the global classical solvability and uniform boundedness of \eqref{1.1} \eqref{1.7} \eqref{1.8} have been proven in
\cite{Winklerpreprint} (cf their Theorem 1.2).  In fact,
due to the positivity of $\phi$, $L\log L$ bounds of $n$ can be derived by means of  a testing procedure.
These bounds can then establish the $L^p$ boundedness of $u$ for any $p>2$ when combined with results asserted in Theorem 1.2 of \cite{Winklerpreprint}.
However, in our situation, unlike in the fluid-free counterpart of (1.1), even \eqref{1.6} cannot provide us substantial a priori regularity information for
   $ \int^T_0\int_{\Omega} n^2 \phi(v)$.

The core of our consideration in the case of $\mu=0$  consists of
   tracking the evolution of the functional
  \begin{equation}\label{1.14}
  \mathcal{F}_1(t):=\int_\Omega n\ln n+ \int_\Omega  \frac{|\nabla v|^2}v+C\int_\Omega |u|^2
\end{equation}
 with some $C>0$, and identifying suitable quasi-entropy properties that it has, provided that the mass of  $n_0$ is suitably small
 (Lemma \ref{lemma3.5}). The properties \eqref{3.16} \eqref{3.17} obtained in Lemma \ref{lemma3.5}
will then form the basis for the derivation of further a priori estimates, such as the time-dependent boundedness of
$\|n(\cdot,t)\|_{L^p(\Omega)} $ (Lemma \ref{lemma3.6}), $\|A^\beta u(\cdot,t)\|_{L^2(\Omega)}$ (Lemma \ref{lemma3.7}), and inter alia
 the time-dependent lower bound for $v$ (Lemma \ref{lemma3.6}). The latter will allow us to achieve local-in-time $L^\infty$ bounds for $n$ through a Moser-type iterative argument (Lemma \ref{lemma3.10}).

To investigate the large-time behavior of classical solutions in the case $\mu>0$,  it is required  to  derive  the uniform boundedness of global classical solutions of  \eqref{1.1} \eqref{1.7} \eqref{1.8}. The  crucial step is  to verify that
 $$
\mathcal{F}_2(t):=   \int_\Omega n\ln n+ C_1 \int_\Omega
\frac{|\nabla v|^2}v
+C_1 \int_\Omega |u|^2
$$
satisfies
  $$
\mathcal{F}_2'(t)+\frac{1}{C_2} \mathcal{F}_2(t)+ \frac{1}{C_2}
 \int_\Omega \frac{|\nabla v|^4}{v^3}\leq C_2,
$$
where $C_i=C_i(\|v_{0}\|_{L^{\infty}(\Omega)})>0$,  $i=1,2$, by taking advantage of the damping effect of the logistic term (Lemma 4.3). Indeed,
making use of the logistic term in the first equation of (1.1),
Lemma 4.3 provides a priori time-independent estimates for
$
 \int^{t}_{(t-1)_+} \int_\Omega  \frac{|\nabla v|^4}{v^3},
$
which, by means of a standard testing procedure and using Lemmas \ref{lemma2.5} and \ref{lemma4.4}, in turn lead us to the time-uniform boundedness
of  $\|n(\cdot,t)\|_{L^p(\Omega)}$. Thereafter, the crucial work is to establish lower bounds of  $\|n(\cdot,t)\|_{L^1(\Omega)}$
through contolling both the destabilizing action of cross-diffusion and the population-diminishing effect of the quadratic death term in (1.1).
However, in the context of the degeneracy of the motility function, the sparse regularity property of $n$ is only sufficient to consider the time average value of
$\|n(\cdot,t)\|_{L^1(\Omega)}$ rather than $\|n(\cdot,t)\|_{L^1(\Omega)}$ itself. Indeed, with the help of the boundedness of $\int_\Omega \frac{|\nabla v(\cdot,t)|^4}{v^3(\cdot,t)}$
established in Lemma 4.6,  a lower bound of the time average value of $\|n(\cdot,t)\|_{L^1(\Omega)}$ is derived in Lemma 4.7, provided that $\|v_0\|_{L^1(\Omega)}$ is sufficiently small. Thereafter, thanks to the decreasing property of $\|v(\cdot,t)\|_{L^1(\Omega)}$, this lower bound allows us to derive the decay property of $\|v(\cdot,t)\|_{W^{1,\infty}(\Omega)}$ (Lemma \ref{lemma4.8}). The latter becomes the starting point of the derivation of the large-time behaviour of the solution components $n$ and $u$ (Lemma \ref{lemma4.9} and Lemma \ref{lemma4.10}).

\section{Preliminaries}

Let us recall  a standard approach to the local solvability for chemotaxis-fluid problems \eqref{1.1} \eqref{1.7} \eqref{1.8}, which will form the basis for establishing global solvability using
an extensibility argument.
\begin{lemma}\label{lemma2.1}(Local existence).
	Let $\Omega\subset\mathbb{R}^{2}$ be a bounded domain with smooth boundary, and $\Phi\in W^{2,\infty}(\Omega)$. Assume the motility function $\phi$  satisfies \eqref{1.10} and the initial data $(n_{0},v_{0},u_{0})$
	satisfy \eqref{1.9}. Then there exist $T_{max}\in(0,\infty]$ and
functions
 \begin{equation}\label{2.1}
		\left\{\begin{array}{l}
			n\in C^{0}(\bar{\Omega}\times[0,T_{max}))\cap C^{2,1}(\bar{\Omega}\times(0,T_{max})),\\
			v\in C^{0}(\bar{\Omega}\times[0,T_{max}))\cap C^{2,1}(\bar{\Omega}\times(0,T_{max})),\\
			u\in C^{0}(\bar{\Omega}\times[0,T_{max}); \mathbb{R}^2)\cap C^{2,1}(\bar{\Omega}\times(0,T_{max}); \mathbb{R}^2)\,\, \hbox{and}\\
P\in C^{1,0}(\bar{\Omega}\times (0,T_{max})),
		\end{array}
		\right.
	\end{equation}
such that  $n> 0$ and  $v> 0$ in $\overline\Omega\times (0,T_{max})$, that  \eqref{1.1} \eqref{1.7} \eqref{1.8} is solved classically in $\Omega\times (0,T_{max})$,  	and that  if
 $T_{max}<\infty$, then for $q>2$ and $\beta\in (\frac12,1)$,
	\begin{equation}\label{2.2}
		\limsup\limits_{t\nearrow T_{max}}\left\{ \|n(\cdot,t)\|_{L^{\infty}(\Omega)}\!
+\!\|v(\cdot,t)\|_{W^{1,q}(\Omega)}+\|A^\beta u(\cdot,t)\|_{L^2(\Omega)}\right\}=\infty.
	\end{equation}
Moreover, in the case of $\mu=0$,
\begin{equation}\label{2.3}
\int_\Omega  n(\cdot,t)=\int_\Omega  n_0 \quad\hbox{for all}  \,\, t
\in(0,T_{max})
\end{equation}
and in the case of $\mu>0$,
\begin{equation}\label{2.3b}
\int_\Omega  n(\cdot,t)\leq \max\{\int_\Omega  n_0,|\Omega|\} \quad\hbox{for all}  \,\, t
\in(0,T_{max})
\end{equation}
and for  all $\mu\geq 0$,  \begin{equation} \label{2.4}
 \|v(\cdot,t)\|_{L^\infty(\Omega)}\leq K:=\|v_{0}\|_{L^{\infty}(\Omega)}
 \quad\hbox{for all}  \,\, t\in(0,T_{max}).
  \end{equation}
\end{lemma}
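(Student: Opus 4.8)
The plan is to establish Lemma~\ref{lemma2.1} by the now-standard Banach fixed-point scheme for coupled chemotaxis-fluid systems, adapted to the signal-dependent diffusion $\Delta(n\phi(v))$. First I would fix $T\in(0,1)$ to be specified, a large $R>0$, and work in the closed bounded set
\[
S_T:=\Bigl\{(n,v,u)\in C^0(\bar\Omega\times[0,T])^{1+1+2}\ \Big|\ \|n\|_{C^0}+\|v\|_{C^0}+\|A^\beta u\|_{C^0([0,T];L^2)}\le R,\ v\ge \tfrac12\inf_\Omega v_0\Bigr\},
\]
and define a map $\Psi$ on $S_T$ by solving, for a given $(\tilde n,\tilde v,\tilde u)\in S_T$: the linear parabolic problem $v_t+\tilde u\cdot\nabla v=\Delta v-\tilde n v$ with $v(0)=v_0$ (which keeps $v>0$ by the maximum principle and, on a short time interval, keeps $v$ bounded below away from $0$, using $v_0>0$ in $\bar\Omega$ from \eqref{1.9}); then the Stokes evolution problem $u_t+\kappa(\tilde u\cdot\nabla)\tilde u=\Delta u+\tilde n\nabla\Phi-\nabla P$, $\nabla\cdot u=0$, $u(0)=u_0$, solved via the analytic semigroup $(e^{-tA})_{t\ge0}$; and finally the first equation, written in divergence form $n_t+\tilde u\cdot\nabla n=\nabla\cdot\bigl(\phi(v)\nabla n\bigr)-\nabla\cdot\bigl(\phi'(v)\,n\,\nabla v\bigr)+\mu n(1-n)$, which is uniformly parabolic on $[0,T]$ precisely because $\phi>0$ on $(0,\infty)$ and $v$ stays in a compact subset of $(0,\infty)$ there (here the hypothesis $\phi\in C^1[0,\infty)\cap C^3(0,\infty)$ and $\phi'(0)>0$ is what guarantees the coefficients are Hölder in $(x,t)$ and bounded); nonnegativity of $n$ follows from the maximum principle since $n_0\ge0$, and strict positivity of $n$ in $\Omega\times(0,T_{max})$ follows from the strong maximum principle together with $n_0\not\equiv0$.

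Next I would verify that $\Psi$ maps $S_T$ into itself and is a contraction there for $T=T(R)$ small: the semigroup estimates $\|A^\beta e^{-tA}\varphi\|_{L^2}\le Ct^{-\beta}\|\varphi\|_{L^2}$, $\|A^\beta e^{-tA}\mathcal P\nabla\cdot F\|_{L^2}\le Ct^{-\beta-1/2}\|F\|_{L^2}$ and the embedding $D(A^\beta)\hookrightarrow W^{1,2}\hookrightarrow L^\infty$ (valid in two dimensions for $\beta>\tfrac12$) control $u$; parabolic Schauder and $L^p$–$L^q$ estimates control $v$ and then $n$, with all the resulting norms bounded by $C(R)\,T^{\theta}$ for some $\theta>0$ plus the initial-data norms, so that choosing $R$ governed by the initial data and then $T$ small closes the self-mapping property, and the same estimates applied to differences give the contraction factor $C(R)T^\theta<1$. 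The unique fixed point is then upgraded to the regularity \eqref{2.1} by a bootstrap: once $(n,v,u)$ is continuous, parabolic interior Schauder theory applied successively to the $v$-, $u$- and $n$-equations (the latter now having $C^{\alpha}$ coefficients $\phi(v),\phi'(v)$ away from $t=0$) yields $C^{2,1}$ regularity in $\bar\Omega\times(0,T_{max})$, and elliptic theory for the pressure gives $P\in C^{1,0}$; standard continuation then produces a maximal $T_{max}\in(0,\infty]$ with the blow-up alternative \eqref{2.2}, the precise form of the norm there being dictated by exactly the quantities that the fixed-point argument needs to restart.

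The mass identities are straightforward: integrating the first equation over $\Omega$ and using $\nabla\cdot u=0$ together with the no-flux condition \eqref{1.7} gives $\frac{d}{dt}\int_\Omega n=\mu\int_\Omega n(1-n)$, which is $\equiv0$ when $\mu=0$, proving \eqref{2.3}; when $\mu>0$, Jensen's inequality $\int_\Omega n^2\ge\frac1{|\Omega|}\bigl(\int_\Omega n\bigr)^2$ turns this into the ODI $y'\le\mu y(1-y/|\Omega|)$ for $y(t):=\int_\Omega n$, whence $y(t)\le\max\{y(0),|\Omega|\}$, giving \eqref{2.3b}. For \eqref{2.4}, since $n\ge0$ the second equation satisfies $v_t+u\cdot\nabla v-\Delta v=-nv\le0$, so the maximum principle (again using $\nabla\cdot u=0$ and $\partial_\nu v=0$) yields $\|v(\cdot,t)\|_{L^\infty}\le\|v_0\|_{L^\infty}=:K$ for all $t<T_{max}$, which also confirms that $v$ remains in a fixed compact subinterval of $(0,\infty)$ on any finite time interval, consistent with the parabolicity used above. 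The main obstacle I anticipate is not any single estimate but the bookkeeping needed to keep $v$ bounded strictly away from $0$ on the fixed-point time interval while simultaneously closing all the norms: the lower bound on $v$ is what makes the $n$-equation non-degenerate, yet it is only available locally in time and must be propagated carefully through the iteration; handling the convective term $\tilde u\cdot\nabla n$ in the $n$-equation (which costs one derivative of $n$) also requires the $L^p$–$L^q$ smoothing of the parabolic operator rather than mere semigroup bounds, so the argument for the $n$-component is the technically heaviest part.
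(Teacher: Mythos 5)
Your overall architecture (successive linear solves, Banach fixed point, bootstrap, continuation) matches the paper's, which simply cites standard local theory for this part; the mass identities, the ODI for \eqref{2.3b} via Jensen, and the comparison argument for \eqref{2.4} are all correct and are exactly what the paper does. The genuine gap is in the extensibility criterion. Because the diffusion in the $n$-equation degenerates where $v=0$, the blow-up alternative that your fixed-point/continuation scheme actually delivers is not \eqref{2.2} but the weaker statement that
\[
\limsup_{t\nearrow T_{max}}\Bigl\{\|n(\cdot,t)\|_{L^{\infty}(\Omega)}+\|v(\cdot,t)\|_{W^{1,q}(\Omega)}+\|A^\beta u(\cdot,t)\|_{L^2(\Omega)}+\bigl\|\tfrac{1}{v(\cdot,t)}\bigr\|_{L^{\infty}(\Omega)}\Bigr\}=\infty,
\]
since a positive lower bound on $v$ is one of the quantities you must control to restart the iteration (your set $S_T$ explicitly imposes $v\ge\frac12\inf_\Omega v_0$). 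You acknowledge that this lower bound ``must be propagated carefully through the iteration,'' but you never propagate it; worse, you assert that \eqref{2.4} ``confirms that $v$ remains in a fixed compact subinterval of $(0,\infty)$ on any finite time interval,'' which is false --- \eqref{2.4} is only an upper bound and says nothing about $v$ staying away from $0$.

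The missing step, which is the one non-routine ingredient of this lemma and the main content of the paper's proof, is: assuming $T_{max}<\infty$ and that the three quantities in \eqref{2.2} stay bounded by some $c_1$, the second equation gives $v_t+u\cdot\nabla v\ge\Delta v-c_1 v$, whence by the parabolic comparison principle $v(x,t)\ge\inf_{\Omega}v_0\cdot e^{-c_1 t}\ge\inf_{\Omega}v_0\cdot e^{-c_1T_{max}}>0$ on $\Omega\times(0,T_{max})$. This shows $\|1/v\|_{L^\infty}$ is controlled by $\|n\|_{L^\infty}$ on finite time intervals, so the fourth term can be removed from the criterion, yielding \eqref{2.2} as stated. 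Without this argument your continuation step does not prove the lemma in the form claimed, and later sections (which invoke \eqref{2.2} with exactly these three quantities) would not go through.
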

\begin{proof}[Proof]
This can  be  proved by  appropriate modifications of the standard arguments from local existence theories of taxis-type parabolic problems involving nonlinear diffusion (see \cite{WinklerNARWA,WinklerCPDE,HYJin}  for detailed proofs addressing two closely related situations).  Indeed, in view of the assumptions \eqref{1.9} and \eqref{1.10},  according to well-established methods based on the contraction mapping principle  along
with standard parabolic regularity theory (\cite{Amann}), one can conclude that with some $T_{max}\in (0,\infty]$,  \eqref{1.1} \eqref{1.7} \eqref{1.8} admits a unique quadruple of functions $ (n, v, u, P)$, up to addition of constants to  the pressure $P$, fulfilling (2.1) as a solution on
$(0, T_{max})$, and that either $T_{max}=\infty$, or $T_{max}<\infty$ in which case for all $q>2$ and $\beta\in (\frac12,1)$
	\begin{equation}\label{2.6}
		\limsup\limits_{t\nearrow T_{max}}\left\{ \|n(\cdot,t)\|_{L^{\infty}(\Omega)}\!
+\!\|v(\cdot,t)\|_{W^{1,q}(\Omega)}+\|A^\beta u(\cdot,t)\|_{L^2(\Omega)}
+ \|\frac1{v(\cdot,t)}\|_{L^{\infty}\Omega)}\right\}=\infty.
	\end{equation}

In order to verify the validity of criterion  \eqref{2.2}, we presuppose that
$T_{max}<\infty$, but for some $q>2$ and $\beta\in (\frac12,1)$,
	\begin{equation}\label{2.7b}
	\|n(\cdot,t)\|_{L^{\infty}(\Omega)}\!
+\!\|v(\cdot,t)\|_{W^{1,q}(\Omega)}+\|A^\beta u(\cdot,t)\|_{L^2(\Omega)}\leq c_1
	\end{equation}	
with some $c_1>0$ for all $t<T_{max}$. From the second equation in (1.1), it follows that $v_t-u\cdot \bigtriangledown v\geq \bigtriangleup v -c_1 v$, and then by the parabolic comparison principle,  we have
 that
$v(x,t)\geq \inf_{x\in\Omega} v_0(x) e^{- c_1 t}$ for all $x\in \Omega$ and $t<T_{max}$, which particularly shows that $v(x,t)\geq c_2:=\inf_{x\in\Omega} v_0(x) e^{- c_1 T_{max}}$ (see the proof of either Lemma 3.7 of \cite{Fuest} or Lemma 2.1 of \cite{WinklerNARWA}  for the details thereof), which along with \eqref{2.7b} would contradict \eqref{2.6} and thereby \eqref{2.2} is actually valid.
Furthermore, interpreting  the first equation in (1.1) as the inhomogeneous linear equation
$$
n_t=\varphi(v) \Delta n+a(x,t)\cdot \nabla n + b(x,t) n
$$
with the smooth functions given by
$a(x,t):= 2\varphi'(n)\nabla v+u$ and    $b(x,t):=\varphi'(v) \Delta\varphi +\varphi''(v)|\nabla v|^2+\mu-\mu n$, $n $ is actually strictly positive in $\overline \Omega\times (0, T_{max})$  by the strong maximum principle.  \eqref{2.3} and \eqref{2.3b} result from an integration of the first equation with respect to $x\in\Omega$ and using the solenoidality of $u$.  The inequality \eqref {2.4} follows from
an application of the comparison principle to the second equation in (1.1) immediately.
\end{proof}

\begin{lemma}\label{lemma2.2}Let  $\phi$  satisfy \eqref{1.10} and $ \|v_{0}\|_{L^{\infty}(\Omega)}\leq K$. Then there exist $\lambda(K)>0$
 and $\Lambda(K)>0$ such that $\lambda(K)v\leq \phi(v)\leq \Lambda(K)v $ and $|\phi'(v)|\leq \Lambda(K)$ in $\Omega\times (0,T_{max})$.
\end{lemma}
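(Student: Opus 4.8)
The plan is to reduce the assertion to elementary one-variable considerations on the compact interval $[0,K]$, using the pointwise bound $0<v\le K$ recorded in \eqref{2.4}. The first step is to observe that the quotient $\phi(v)/v$ extends to a continuous function on all of $[0,K]$. Indeed, set $g(s):=\phi(s)/s$ for $s\in(0,K]$ and $g(0):=\phi'(0)$; since $\phi\in C^1[0,\infty)$ with $\phi(0)=0$, the mean value theorem gives $\phi(s)/s=\phi'(\theta_s)$ for some $\theta_s\in(0,s)$, so $g(s)\to\phi'(0)$ as $s\to 0^+$ by continuity of $\phi'$, and hence $g\in C^0[0,K]$.

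The second step is to extract the constants. By \eqref{1.10}, $g$ is strictly positive on $[0,K]$: on $(0,K]$ because $\phi>0$ on $(0,\infty)$, and at $s=0$ because $g(0)=\phi'(0)>0$. Thus, being a continuous positive function on a compact set, $g$ attains
\[
\lambda(K):=\min_{s\in[0,K]}g(s)>0,\qquad M(K):=\max_{s\in[0,K]}g(s)<\infty .
\]
Likewise $\phi'\in C^0[0,\infty)$ is bounded on $[0,K]$, say $|\phi'(s)|\le M'(K)$ there; set $\Lambda(K):=\max\{M(K),M'(K)\}$.

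The third and final step is to evaluate these bounds along the solution. Since \eqref{2.4} guarantees $v(x,t)\in(0,K]$ throughout $\Omega\times(0,T_{max})$, substituting $s=v(x,t)$ into the above yields $\lambda(K)\,v\le g(v)\,v=\phi(v)\le\Lambda(K)\,v$ together with $|\phi'(v)|\le\Lambda(K)$, pointwise in $\Omega\times(0,T_{max})$, which is precisely the claim. (If an explicit monotone dependence on $K$ is wanted, it follows at once from the nesting $[0,K_1]\subseteq[0,K_2]$.)

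There is no genuine obstacle in this lemma; the only point requiring care --- and the only place where the structural hypotheses $\phi(0)=0$ and $\phi'(0)>0$ are actually used --- is the passage to the degenerate value $v=0$, which is exactly what keeps $\phi(v)/v$ bounded above and bounded away from zero near the lower end of the range of $v$. Everything else is continuity and compactness.
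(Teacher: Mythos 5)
Your proof is correct and follows the same route the paper intends: the paper simply invokes the $L^\infty$ bound \eqref{2.4} and defers the elementary continuity--compactness argument to Lemma 3.2 of the cited reference, which is exactly the argument you have written out (continuous extension of $\phi(s)/s$ to $s=0$ via $\phi'(0)$, then positivity and extrema on the compact interval $[0,K]$). No gaps; you correctly identify that $\phi(0)=0$ and $\phi'(0)>0$ are precisely what control the quotient at the degenerate end.
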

\begin{proof} Utilizing the fact that  $\|v(\cdot,t)\|_{L^\infty(\Omega)}\leq K$ for all $ t\in(0,T_{max})$ obtained in Lemma \ref{lemma2.1} and the assumption  \eqref{1.10}, one can readily complete the proof of this lemma (see Lemma 3.2 of \cite{WinklerAIHP} for details).
\end{proof}
 The following lemmas  play an important role in the derivation of the associated  functional inequality, through which  the associated bounds of solutions are established. 

\begin{lemma}\label{lemma2.3}
(\cite [Lemma 2.2 and Lemma 2.3] {Winkler(2020)})
	Suppose that $\Omega \subset\mathbb{R}^{2}$ is a smooth bounded domain. Then
for all $\varepsilon>0$
there exists $M=M (\varepsilon, \Omega)>0$ such that if $0 \not\equiv \varphi \in C^{0}(\overline\Omega )$ is nonnegative and $\psi\in W^{1,2}(\Omega)$, then for each $a>0$,
\begin{equation}\label{2.7}
		\begin{split}
			\int_{\Omega}\varphi | \psi | &\leq \frac{1}{a}\int_{\Omega}\varphi \ln\frac{ \varphi}{\overline{\varphi}}+ \frac{(1 + \varepsilon )a}{8\pi} \cdot \biggl\{\int_{\Omega}\varphi \biggr\} \cdot\int_{\Omega} | \nabla \psi |^{2}\\
			&\quad+M a\cdot \biggl\{\int_{\Omega} \varphi \biggr\} \cdot \biggl\{ \int_{\Omega}| \psi | \biggr\}^{2}+\frac{M}{a}\int_{\Omega}\varphi ,
		\end{split}
	\end{equation}
where $\overline{\varphi}:= \frac{1}{|\Omega |}\int_{\Omega} \varphi$.
\end{lemma}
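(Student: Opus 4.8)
The plan is to obtain \eqref{2.7} by pairing the nonnegative weight $\varphi$ against $a|\psi|$ through an entropy--exponential duality, and then absorbing the resulting exponential integral by means of the Trudinger--Moser inequality on $\Omega$. The one analytic input I would isolate from \cite{Winkler(2020)} is the Neumann version of that inequality in its linear-exponent form: for every $\varepsilon>0$ there is $C=C(\varepsilon,\Omega)>0$ such that
\begin{equation*}
\int_\Omega e^{w}\leq C\exp\left\{\frac{1+\varepsilon}{8\pi}\int_\Omega|\nabla w|^2+\frac1{|\Omega|}\int_\Omega w\right\}\qquad\text{for all }w\in W^{1,2}(\Omega),
\end{equation*}
the constant $\tfrac{1}{8\pi}$ being the sharp one for $W^{1,2}(\Omega)$ in two dimensions. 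I would apply it with the choice $w=a|\psi|$, using that $|\psi|\in W^{1,2}(\Omega)$ with $|\nabla|\psi||\leq|\nabla\psi|$ almost everywhere, so that $\int_\Omega|\nabla w|^2\leq a^2\int_\Omega|\nabla\psi|^2$.

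The algebraic heart of the argument is the Fenchel--Young inequality for the convex function $s\mapsto s\ln s-s$ and its conjugate $t\mapsto e^{t}$, namely $st\leq s\ln s-s+e^{t}$ for $s\geq0$ and $t\in\mathbb{R}$. Writing $m:=\int_\Omega\varphi$ and $\overline\varphi=m/|\Omega|>0$, which is positive thanks to $\varphi\not\equiv0$, I would apply this pointwise with $s=\varphi/\overline\varphi$ and $t=a|\psi|-c$ for a free parameter $c>0$, then multiply by $\overline\varphi$ and integrate to arrive at
\begin{equation*}
a\int_\Omega\varphi|\psi|\leq\int_\Omega\varphi\ln\frac{\varphi}{\overline\varphi}+(c-1)m+\overline\varphi\,e^{-c}\int_\Omega e^{a|\psi|}.
\end{equation*}
Inserting the Trudinger--Moser bound and then selecting $c:=\frac{(1+\varepsilon)a^2}{8\pi}\int_\Omega|\nabla\psi|^2+\frac{a}{|\Omega|}\int_\Omega|\psi|+\ln C$ collapses the exponential contribution to $\overline\varphi\,e^{-c}\int_\Omega e^{a|\psi|}\leq\overline\varphi=m/|\Omega|$, while $(c-1)m$, after division by $a$, reproduces precisely
\begin{equation*}
\frac{(1+\varepsilon)am}{8\pi}\int_\Omega|\nabla\psi|^2+\frac{m}{|\Omega|}\int_\Omega|\psi|+\frac{(\ln C-1)m}{a}.
\end{equation*}

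It then remains only to recast the linear leftover $\frac{m}{|\Omega|}\int_\Omega|\psi|$ into the quadratic form demanded by \eqref{2.7}, which I would do with the elementary Young inequality $\frac{1}{|\Omega|}X\leq K a X^{2}+\frac{1}{4Ka|\Omega|^{2}}$ applied to $X=\int_\Omega|\psi|$, so that this term is dominated by $Kam\bigl\{\int_\Omega|\psi|\bigr\}^{2}+\frac{m}{4Ka|\Omega|^{2}}$. Collecting all the remaining $\tfrac1a$-weighted constants into one factor $M=M(\varepsilon,\Omega)$ then yields the asserted inequality, the entropy $\int_\Omega\varphi\ln\frac{\varphi}{\overline\varphi}\geq0$ being exactly the term that is carried along unchanged. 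I expect the genuine obstacle to lie not in this bookkeeping but in securing the sharp constant $\tfrac{1}{8\pi}$ in the Neumann Trudinger--Moser inequality together with its quantitative $(1+\varepsilon)$ relaxation; once that input (imported here from \cite{Winkler(2020)}) is granted, the downstream matching of coefficients is exact.
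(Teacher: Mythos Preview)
Your proposal is correct, and it matches the approach taken in the cited source. Note that the present paper does not itself prove this lemma; it quotes it verbatim from \cite[Lemmas~2.2 and~2.3]{Winkler(2020)}. In that reference, Lemma~2.2 is exactly the Neumann Trudinger--Moser inequality you state (with the sharp constant $\tfrac{1}{8\pi}$ and the $(1+\varepsilon)$ relaxation), and Lemma~2.3 is obtained from it by the entropy--exponential (Fenchel--Young) pairing $st\le s\ln s - s + e^{t}$ applied with $s=\varphi/\overline\varphi$ and $t=a|\psi|$, followed by the same optimization in an auxiliary constant and a final Young inequality to turn $\int_\Omega|\psi|$ into $\bigl(\int_\Omega|\psi|\bigr)^{2}$. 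Your bookkeeping is accurate; in particular, the observation that $|\psi|\in W^{1,2}(\Omega)$ with $|\nabla|\psi||\le|\nabla\psi|$ a.e.\ is the point that justifies applying Trudinger--Moser to $a|\psi|$, and the sign constraint $c>0$ you impose is immaterial since the Young-type inequality $st\le s\ln s-s+e^{t}$ holds for all $t\in\mathbb{R}$.
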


\begin{lemma}\label{lemma2.4}(\cite[Lemma 3.5]{WinklerAIHP})
	Suppose that $\Omega \subset\mathbb{R}^{2}$ is a smooth bounded domain. For  all $p\geq 1$, there is $\Gamma >1$ such that
 for each  $\varphi \in C^{1}(\overline\Omega )$ and $\psi\in C^{1}(\overline\Omega)$  fulfilling $\varphi>0$ and $\psi>0$ in $\overline\Omega$, we have
\begin{equation}\label{2.9}
		\begin{split}
			\int_{\Omega} \frac{\varphi^p}{\psi} |\nabla \psi|^2 &\leq \eta \int_{\Omega}\varphi^{p-2}\psi|\nabla \varphi|^2+
\eta \int_{\Omega} \varphi \psi+p^2\Gamma^p(1+\frac 1 \eta)
 \biggl\{
 \int_{\Omega}\varphi^p+
 \biggl\{ \int_\Omega \varphi \biggr\} ^{2p-1} \biggr\} \cdot
 \int_{\Omega} \frac{|\nabla \psi|^4}{\psi^3}	
 	\end{split}
	\end{equation}
for all $\eta>0$.
\end{lemma}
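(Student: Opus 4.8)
This is the weighted functional inequality \cite[Lemma 3.5]{WinklerAIHP}; it is tailored so that, with $\varphi=n$ and $\psi=v$, the cross-diffusive term $\int_\Omega\frac{\varphi^p}{\psi}|\nabla\psi|^2$ arising when the first equation in \eqref{1.1} is tested by $n^{p-1}$ can be absorbed into the diffusive dissipation $\int_\Omega\varphi^{p-2}\psi|\nabla\varphi|^2$ up to a controllable multiple of the ``entropy-type'' quantity $\int_\Omega\frac{|\nabla\psi|^4}{\psi^3}$. The plan is an interpolation argument in the plane. First I would pass to $z:=\varphi^{p/2}$, which is admissible since $\varphi\in C^1(\overline\Omega)$ is bounded away from $0$; then $\nabla z=\tfrac p2\varphi^{p/2-1}\nabla\varphi$, so that $\int_\Omega\varphi^{p-2}\psi|\nabla\varphi|^2=\tfrac{4}{p^2}\int_\Omega\psi|\nabla z|^2$, while $\int_\Omega\frac{\varphi^p}{\psi}|\nabla\psi|^2=\int_\Omega\frac{z^2}{\psi}|\nabla\psi|^2$, $\int_\Omega\varphi^p=\int_\Omega z^2$ and $\int_\Omega\varphi=\int_\Omega z^{2/p}$. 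This reduces the claim to a statement in $z>0$, $\psi>0$ in which the $p$-dependence sits only in the prefactor.

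Next I would isolate $\int_\Omega\frac{|\nabla\psi|^4}{\psi^3}$ by a pointwise Young inequality of the form $\frac{\varphi^p}{\psi}|\nabla\psi|^2=\big(\varphi^p\psi^{1/2}\big)\cdot\frac{|\nabla\psi|^2}{\psi^{3/2}}\le\varepsilon\,\frac{|\nabla\psi|^4}{\psi^3}+\frac{1}{4\varepsilon}\,\varphi^{2p}\psi$ for arbitrary $\varepsilon>0$, leaving the higher-order integral $\int_\Omega\varphi^{2p}\psi$ to be controlled. To this I would apply a two-dimensional Gagliardo--Nirenberg inequality (equivalently, the Sobolev embedding $W^{1,1}\hookrightarrow L^2$) to a suitable power of $\varphi$ weighted by a power of $\psi$, combined with H\"older's inequality to reinstate the correct power of $\psi$; differentiating the test function returns (fixed multiples of) the gradient integrals $\int_\Omega\varphi^{p-2}\psi|\nabla\varphi|^2$ and $\int_\Omega\frac{\varphi^p}{\psi}|\nabla\psi|^2$ itself --- after, where needed, a further Young step to repair the weights --- each multiplied, as is characteristic of planar interpolation, by the mass factors $\int_\Omega\varphi^p$ and $\big(\int_\Omega\varphi\big)^{2p-1}$, plus a lower-order contribution that goes into $\int_\Omega\varphi\psi$. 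A concluding set of Young inequalities then distributes these: choosing the small parameters so that the regenerated $\int_\Omega\frac{\varphi^p}{\psi}|\nabla\psi|^2$ carries coefficient strictly below $1$ allows it to be moved to the left, the coefficients of $\int_\Omega\varphi^{p-2}\psi|\nabla\varphi|^2$ and of $\int_\Omega\varphi\psi$ can be brought down to the prescribed $\eta$, and lumping the remaining constants into $p^2\Gamma^p(1+\tfrac1\eta)$ --- with $\Gamma>1$ covering the $p$-dependence of the Gagliardo--Nirenberg constant --- delivers the stated inequality.

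I expect the weight bookkeeping to be the main obstacle: the powers of $\psi$ attached to the gradient integrals on the right-hand side are rigidly prescribed ($\psi^{+1}$ next to $|\nabla\varphi|^2$, $\psi^{-1}$ next to $|\nabla\psi|^2$, $\psi^{-3}$ next to $|\nabla\psi|^4$), so the exponents used in the Young splitting and in the Gagliardo--Nirenberg test function have to be tuned so that precisely these weights re-emerge and, above all, so that the reproduced copy of $\int_\Omega\frac{\varphi^p}{\psi}|\nabla\psi|^2$ is genuinely absorbable. Since $\psi$ is assumed only positive and $C^1$, not bounded, one cannot take the shortcut of pulling out $\sup_\Omega\psi$; this is exactly what forces the interpolation route and makes the super-linear mass factor $\big(\int_\Omega\varphi\big)^{2p-1}$ unavoidable.
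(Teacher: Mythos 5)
The paper does not prove this lemma at all: it is imported verbatim from \cite[Lemma 3.5]{WinklerAIHP}, so there is no in-paper argument to compare against. Your plan does reconstruct the strategy of the cited source --- a two-dimensional interpolation ($W^{1,1}(\Omega)\hookrightarrow L^{2}(\Omega)$, equivalently Gagliardo--Nirenberg) applied to a weighted power such as $\varphi^{p}\psi^{1/2}$, followed by Cauchy--Schwarz to regenerate the gradient integrals with mass factors and an absorption of the reproduced copy of $\int_\Omega\frac{\varphi^p}{\psi}|\nabla\psi|^2$ --- so the approach is the right one. One caution if you execute it: the parameter $\varepsilon$ in your opening pointwise Young step cannot be a fixed constant. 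The target bounds the $\int_\Omega\frac{|\nabla\psi|^4}{\psi^3}$ contribution by a multiple of $\int_\Omega\varphi^p+\bigl\{\int_\Omega\varphi\bigr\}^{2p-1}$, which may be arbitrarily small, whereas a fixed $\varepsilon$ leaves a term $\varepsilon\int_\Omega\frac{|\nabla\psi|^4}{\psi^3}$ with no such factor; consistently, absorbing the regenerated copy of the left-hand side forces $\varepsilon\gtrsim\frac{p^2}{\eta}\int_\Omega\varphi^p$ anyway, which is what makes the final coefficient come out proportional to the mass functionals. Your closing remark that $\psi$ is ``not bounded'' is also imprecise ($\psi\in C^1(\overline\Omega)$ on a compact closure is bounded above and below by positive constants); the real constraint is that $\Gamma$ must be independent of $\inf\psi$ and $\sup\psi$, which is indeed what rules out crude sup-norm estimates and forces the weighted interpolation whose bookkeeping (in particular the emergence of exactly $\bigl\{\int_\Omega\varphi\bigr\}^{2p-1}$) you correctly identify as the remaining labor.
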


\begin{lemma}\label{lemma2.5}
(\cite [Lemma 2.3] {PWang})
Let $T\in (0,\infty], 0<\tau<T$ and suppose that $y$ is a nonnegative absolutely continuous function satisfying
\begin{equation}\label{2.11}
y'(t)+a(t)y(t)\leq b(t)y(t)+c(t) \quad ~~~~~\hbox{for a.e. }t\in (0,T)
\end{equation}
for some functions $a,b,c\in L^1_{loc}(0,T)$ where $a(t)>0,b(t)\geq 0, c(t)\geq 0$ and such that there exist
$
b_1,c_1>0$ and $\varrho>0$ such that
$$
\displaystyle\sup_{0\leq t\leq T} \int^{t}_{(t-\tau)_+} b(s)ds\leq b_1,~~
\displaystyle\sup_{0\leq t\leq T} \int^{t}_{(t-\tau)_+} c(s)ds\leq c_1
$$
 and
$$
\int^{t}_{(t-\tau)_+} a(s)ds-\int^{t}_{(t-\tau)_+} b(s)ds\geq \varrho
~~~\hbox{for any}~ t\in (0,T).$$
Then
$$
y(t)\leq y(0)e^{b_1}+\displaystyle\frac{c_1 e^{2b_1}}{1-e^{-\varrho}}+c_1e^{b_1}~~\hbox{ for all}~t\in (0,T).
$$
\end{lemma}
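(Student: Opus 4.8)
The plan is to treat the inequality as a linear first-order differential inequality and to propagate the bound window by window over subintervals of length $\tau$. First I would absorb the $a$ and $b$ terms into a single coefficient by rewriting \eqref{2.11} as $y'(t)+(a(t)-b(t))y(t)\le c(t)$ for a.e.\ $t\in(0,T)$. Multiplying by the integrating factor $\exp\bigl(\int_0^t(a-b)\bigr)$ --- which is absolutely continuous since $a,b\in L^1_{loc}$ --- and integrating from any chosen $t_0$ to $t$ yields the variation-of-constants estimate
$$y(t)\le y(t_0)\,e^{-\int_{t_0}^t(a-b)}+\int_{t_0}^t e^{-\int_s^t(a-b)}\,c(s)\,ds.$$
This representation is the basic engine; everything else is a matter of controlling the exponential weights using the three window hypotheses.

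The key sign observation is that, although $a-b$ need not be nonnegative pointwise, for any $s\le t$ with $t-s\le\tau$ one has $\int_s^t(a-b)\ge-\int_s^t b\ge -b_1$, because $a>0$ forces $\int_s^t a\ge0$ while $[s,t]\subseteq[(t-\tau)_+,t]$ gives $\int_s^t b\le b_1$. Hence over any partial window the weight $e^{-\int_s^t(a-b)}$ is at most $e^{b_1}$, whereas over a full window $[(k-1)\tau,k\tau]$ the hypothesis $\int(a-b)\ge\varrho$ supplies the contraction factor $e^{-\varrho}$. Writing $t_k:=k\tau$ and applying the representation on $[t_{k-1},t_k]$, I would combine these two facts, together with $\int_{t_{k-1}}^{t_k}c\le c_1$ for the source term, to obtain the recursion
$$y(t_k)\le e^{-\varrho}\,y(t_{k-1})+c_1\,e^{b_1}.$$
Iterating this recursion and summing the resulting geometric series in $e^{-\varrho}$ gives the endpoint bound $y(t_k)\le y(0)+\dfrac{c_1e^{b_1}}{1-e^{-\varrho}}$ at every $t_k\le T$.

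Finally, for a general $t\in(0,T)$ I would let $k$ be the largest integer with $t_k\le t$ and apply the representation once more on the last, possibly incomplete, window $[t_k,t]$. Since $t-t_k<\tau$, the partial-window bound yields $y(t)\le e^{b_1}y(t_k)+c_1e^{b_1}$, and inserting the endpoint estimate produces exactly
$$y(t)\le y(0)\,e^{b_1}+\frac{c_1e^{2b_1}}{1-e^{-\varrho}}+c_1e^{b_1},$$
as claimed; the case $t<\tau$ is the same computation with $t_k=0$. I expect the only real subtlety to be the sign-control of $\int_s^t(a-b)$ described above: because $a-b$ may change sign, one cannot directly monotone-bound these integrals, and the argument hinges on cleanly separating the guaranteed per-window gain $\varrho$ from the at-most-$b_1$ loss contributed by $b$ on each partial window. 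The measure-theoretic points (absolute continuity of $y$ and of the integrating factor, and the validity of the fundamental theorem of calculus for these $L^1_{loc}$ data) are routine and I would record them only briefly.
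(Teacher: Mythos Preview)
Your argument is correct. The paper does not actually prove this lemma; it merely cites it as \cite[Lemma~2.3]{PWang}, so there is no in-paper proof to compare against, and your window-by-window Gronwall iteration reproduces exactly the stated bound.
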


\section{Proof of  Theorem 1.1(i)}

In this section, we establish the global classical solvability of the system \eqref{1.1} \eqref{1.7} \eqref{1.8}  with $\mu=0$.
As a starting point of the analysis, we derive a basic differential inequality. Similar ideas can be found in \cite{WinklerCPDE}.
\begin{lemma}\label{lemma3.1} Let  $ \|v_{0}\|_{L^{\infty}(\Omega)}\leq K$ and suppose that
$T_{max}<\infty$. Then for all $t<T_{max}$, we have
	\begin{equation}\label{3.1}
\frac d{dt}\int_\Omega n\ln n+
\frac{\lambda(K)} 2\int_\Omega\frac{v}{n}|\nabla n|^2
\leq \frac{\Lambda^2(K)}{2\lambda(K)}
 \int_\Omega\frac nv |\nabla v|^2,
\end{equation}
where $\lambda(K)$ and $\Lambda(K)$ are as in Lemma \ref{lemma2.2}.

\end{lemma}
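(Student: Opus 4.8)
The plan is to test the first equation of \eqref{1.1} by $1+\ln n$, which is the natural entropy structure for the density equation. Since $\nabla\cdot u=0$ and $u=0$ on $\partial\Omega$, the convective term $\int_\Omega (u\cdot\nabla n)(1+\ln n)=\int_\Omega u\cdot\nabla(n\ln n)=0$ drops out. With $\mu=0$ there is no logistic contribution, so we are left with
\begin{equation*}
\frac{d}{dt}\int_\Omega n\ln n=\int_\Omega \Delta(n\phi(v))\cdot(1+\ln n)=-\int_\Omega \nabla(n\phi(v))\cdot\frac{\nabla n}{n},
\end{equation*}
using the homogeneous Neumann boundary condition on $n$ (and the fact that $n>0$ in $\overline\Omega\times(0,T_{max})$, which is guaranteed by Lemma~\ref{lemma2.1}, so that $\ln n$ is admissible).

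Next I would expand $\nabla(n\phi(v))=\phi(v)\nabla n+n\phi'(v)\nabla v$, so that the right-hand side becomes
\begin{equation*}
-\int_\Omega \frac{\phi(v)}{n}|\nabla n|^2-\int_\Omega \phi'(v)\,\nabla v\cdot\nabla n.
\end{equation*}
For the first (good) term I invoke the lower bound $\phi(v)\ge\lambda(K)v$ from Lemma~\ref{lemma2.2}, which gives $-\int_\Omega \frac{\phi(v)}{n}|\nabla n|^2\le-\lambda(K)\int_\Omega\frac{v}{n}|\nabla n|^2$. For the cross term I apply Young's inequality in the weighted form
\begin{equation*}
\Bigl|\int_\Omega \phi'(v)\,\nabla v\cdot\nabla n\Bigr|\le \frac{\lambda(K)}{2}\int_\Omega\frac{v}{n}|\nabla n|^2+\frac{1}{2\lambda(K)}\int_\Omega\frac{n}{v}|\phi'(v)|^2|\nabla v|^2,
\end{equation*}
and then bound $|\phi'(v)|\le\Lambda(K)$, again from Lemma~\ref{lemma2.2}, to control the last integral by $\frac{\Lambda^2(K)}{2\lambda(K)}\int_\Omega\frac{n}{v}|\nabla v|^2$. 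Collecting terms, one half of the good dissipation term survives on the left, and \eqref{3.1} follows.

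The only delicate point is the legitimacy of the formal testing procedure: strictly speaking one should carry it out on $(1+\ln n)$ with $n$ replaced by $n+\varepsilon$, or work on a time interval $(\tau,t)$ with $\tau>0$ where $n,v\in C^{2,1}$ and $n>0$ on the compact set $\overline\Omega$ (hence bounded below), and then let $\tau\downarrow0$; the regularity asserted in \eqref{2.1} together with the strict positivity of $n$ and $v$ makes every integral finite and every integration by parts valid, so this is routine rather than substantive. I do not expect a real obstacle here — this is essentially the standard entropy identity for the signal-dependent diffusion operator, and the assumption $T_{max}<\infty$ is only used to ensure the constants $\lambda(K),\Lambda(K)$ are available via Lemma~\ref{lemma2.2} (which in turn rests on \eqref{2.4}).
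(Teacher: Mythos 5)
Your proposal is correct and follows essentially the same route as the paper: test with $(1+)\ln n$, use solenoidality to kill the transport term, split $\nabla(n\phi(v))=\phi(v)\nabla n+n\phi'(v)\nabla v$, and absorb the cross term by Young's inequality together with the bounds $\phi(v)\ge\lambda(K)v$ and $|\phi'(v)|\le\Lambda(K)$ from Lemma~\ref{lemma2.2}. The only cosmetic difference is that the paper uses $\phi(v)/n$ directly as the Young weight (keeping $\tfrac12\int_\Omega\frac{\phi(v)}{n}|\nabla n|^2$ before invoking Lemma~\ref{lemma2.2}) whereas you use $\lambda(K)v/n$; the resulting constants are identical.
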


\begin{proof} According to Lemma  \ref{lemma2.1}, we have $n(x,t)>0$ for $x\in \Omega,t>0$.  Multiplying the first equation in (\ref{1.1}) by $\frac1{n}$,  using the solenoidality of $u$ and by the Young inequality,
we arrive at
\begin{equation}\label{3.2}
\begin{split}
\frac d{dt}\int_\Omega n\ln n
&=\int_\Omega\ln n
(\Delta (n\phi(v))-u\cdot\nabla n)\\
&=-\int_\Omega\frac{\nabla n}{n}
(\phi(v) \nabla n+\phi'(v)n \nabla v )\\
&\leq -\int_\Omega \frac{\phi(v)} {2n}|\nabla n|^2+
\int_\Omega \frac{n|\phi'(v)|^2} {2\phi(v)} |\nabla v|^2\\
&\leq- \frac{\lambda(K)} 2\int_\Omega\frac{v}{n}|\nabla n|^2
+
\frac{\Lambda^2(K)}{2\lambda(K)}\int_\Omega \frac nv |\nabla v|^2,
\end{split}
\end{equation}
where we have applied Lemma \ref{lemma2.2} to obtain the estimates
$$
\phi(v)\geq \lambda(K)v, \quad \frac{|\phi'(v)|^2} {\phi(v)}\leq \frac{\Lambda^2(K)}{\lambda(K)v}\quad \hbox{in}\,
\,\Omega\times (0,T_{max}).
$$
\end{proof}

Now to further investigate the integral on the right of \eqref{3.1}, we shall trace the evolution of
$\int_\Omega  \frac{|\nabla v|^2}v$, with the aim
of leveraging on favourable singularly weighted dissipation rates arising there. We shall also apply the same to similar quantities such as $\int_\Omega \frac {|\nabla v|^4}{v^3}$.
 \begin{lemma}\label{lemma3.2}
 For all $\varepsilon>0$, we have
 \begin{equation}\label{3.3}
\begin{split}
&\frac d{dt} \int_\Omega  \frac{|\nabla v|^2}v
+(\frac18-3\varepsilon)\int_\Omega \frac{|\nabla v|^4}{v^3}\\
\leq &
\frac{\lambda(K)} 4\int_\Omega\frac{v}{n}|\nabla n|^2
+
\frac4{\lambda(K)} \int_\Omega\frac nv |\nabla v|^2 +\frac1{\varepsilon}
\int_\Omega v |\nabla u|^2+C_\varepsilon\int_{\Omega}v,
\end{split}
\end{equation}
where the constant $C_{\varepsilon}>0$ depends only on $\varepsilon$.
\end{lemma}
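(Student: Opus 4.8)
The plan is to differentiate $\int_\Omega\frac{|\nabla v|^2}{v}$, integrate by parts in the gradient pairing (the Neumann condition $\partial_\nu v=0$ kills the boundary contribution there), and insert the second equation $v_t=\Delta v-nv-u\cdot\nabla v$. This produces
\[
\frac{d}{dt}\int_\Omega\frac{|\nabla v|^2}{v}=I+II+III ,
\]
where $I:=-2\int_\Omega\frac{(\Delta v)^2}{v}+\int_\Omega\frac{|\nabla v|^2\Delta v}{v^2}$ is the diffusive group, $II:=2\int_\Omega n\Delta v-\int_\Omega\frac{n|\nabla v|^2}{v}$ the absorptive group, and $III:=2\int_\Omega\frac{(u\cdot\nabla v)\Delta v}{v}-\int_\Omega\frac{(u\cdot\nabla v)|\nabla v|^2}{v^2}$ the convective group; I would then estimate the three pieces separately. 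The idea is the same one used in Lemma~\ref{lemma3.1} and, in the non-degenerate fluid setting, in \cite{WinklerCPDE}.

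For $II$, integrating by parts once more gives $2\int_\Omega n\Delta v=-2\int_\Omega\nabla n\cdot\nabla v$ (again $\partial_\nu v=0$ removes the boundary term), and Young's inequality with the weights $\sqrt{v/n}$ and $\sqrt{n/v}$ yields $-2\int_\Omega\nabla n\cdot\nabla v\le\frac{\lambda(K)}{4}\int_\Omega\frac{v}{n}|\nabla n|^2+\frac{4}{\lambda(K)}\int_\Omega\frac{n}{v}|\nabla v|^2$, while the remaining term $-\int_\Omega\frac{n|\nabla v|^2}{v}\le0$ is simply dropped; this already accounts for the first two terms on the right-hand side. For $III$, I would carry out a short chain of integrations by parts: first shift the Laplacian off $\Delta v$, then use $\nabla v^{\top}D^2v\,w=\tfrac12\,w\cdot\nabla|\nabla v|^2$ to rewrite the Hessian contribution, invoking the no-slip condition $u|_{\partial\Omega}=0$, the incompressibility $\nabla\cdot u=0$ and $\partial_\nu v=0$ to discard all boundary and divergence terms; the various pieces cancel and one is left with $III=-2\int_\Omega\frac1v\,\nabla v^{\top}(\nabla u)^{\top}\nabla v$, hence $|III|\le 2\int_\Omega\frac{|\nabla u|\,|\nabla v|^2}{v}\le\varepsilon\int_\Omega\frac{|\nabla v|^4}{v^3}+\frac1\varepsilon\int_\Omega v|\nabla u|^2$ by Young, which delivers the term $\frac1\varepsilon\int_\Omega v|\nabla u|^2$ at the cost of one factor $\varepsilon\int_\Omega\frac{|\nabla v|^4}{v^3}$.

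The heart of the matter is $I$. Completing the square in $\Delta v$ gives the exact identity $I=\frac18\int_\Omega\frac{|\nabla v|^4}{v^3}-2\int_\Omega\frac1v\big(\Delta v-\frac{|\nabla v|^2}{4v}\big)^2$, so the assertion reduces to the lower bound
\[
\int_\Omega\frac1v\Big(\Delta v-\frac{|\nabla v|^2}{4v}\Big)^2\ \ge\ \Big(\frac18-\varepsilon\Big)\int_\Omega\frac{|\nabla v|^4}{v^3}-C_\varepsilon\int_\Omega v .
\]
To prove it I would integrate by parts twice (using $\partial_\nu v=0$) to rewrite the left-hand side as $\int_\Omega v\,|D^2\ln v|^2+\frac1{16}\int_\Omega\frac{|\nabla v|^4}{v^3}-\frac12\int_{\partial\Omega}\frac1v\,\partial_\nu|\nabla v|^2$, then control the boundary integral by the bound $\partial_\nu|\nabla v|^2\le C_\Omega|\nabla v|^2$ on $\partial\Omega$ (a consequence of $\partial_\nu v=0$ and the bounded curvature of $\partial\Omega$) together with a trace–interpolation estimate absorbing $\int_{\partial\Omega}\frac{|\nabla v|^2}{v}$ into $\varepsilon\int_\Omega\frac{|\nabla v|^4}{v^3}+\varepsilon\int_\Omega v|D^2\ln v|^2+C_\varepsilon\int_\Omega v$, and finally recover the missing $\frac1{16}\int_\Omega\frac{|\nabla v|^4}{v^3}$ out of the dissipation $\int_\Omega v|D^2\ln v|^2$ by a two-dimensional functional inequality of the type $\int_\Omega\frac{|\nabla v|^4}{v^3}\le C\int_\Omega v|D^2\ln v|^2+C_\varepsilon\int_\Omega v$, obtained by a Gagliardo–Nirenberg/Sobolev argument in the spirit of Lemma~\ref{lemma2.4}. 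I expect this last step — quantitatively pinning down the constant $\tfrac18$ while keeping the error as cheap as $\int_\Omega v$ — to be the main obstacle; the boundary estimate and the bookkeeping are routine once the right trace inequality is in hand. Adding the three estimates and collecting the $\varepsilon$'s (two from the diffusive term, where the parameter enters doubled, and one from the convective term) gives $\frac{d}{dt}\int_\Omega\frac{|\nabla v|^2}{v}+\big(\frac18-3\varepsilon\big)\int_\Omega\frac{|\nabla v|^4}{v^3}\le(\text{right-hand side})$, as claimed.
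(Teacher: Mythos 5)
Your proposal is essentially the paper's own proof: the three-way decomposition of $\frac{d}{dt}\int_\Omega\frac{|\nabla v|^2}{v}$, the Young estimate for the absorptive group, the reduction of the convective group to $-2\int_\Omega\frac1v\,\nabla v^{\top}(\nabla u)^{\top}\nabla v$ with the bound $|III|\le 2\int_\Omega\frac{|\nabla v|^2}{v}|\nabla u|$, the rewriting of the diffusive core as $-2\int_\Omega v|D^2\ln v|^2$ plus the boundary integral $\int_{\partial\Omega}v^{-1}\partial_\nu|\nabla v|^2$ (your completed-square identity is this identity in disguise, as one checks by expanding), and the trace estimate for that boundary term (Lemma 2.4 of \cite{Jiang}) all coincide with \eqref{3.4}--\eqref{3.9}. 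The step you single out as the main obstacle is settled in the paper by quoting $\int_\Omega v^{-3}|\nabla v|^4\le(2+\sqrt2)^2\int_\Omega v|D^2\ln v|^2$ (Lemma 3.4 of \cite{WinklerNARWA}; no $\int_\Omega v$ error term is needed), and since your reduction only requires this inequality with a constant strictly below $16$ while $(2+\sqrt{2})^2=6+4\sqrt{2}\approx 11.7$, your argument closes exactly as the paper's does.
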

\begin{proof}[Proof] The gist of the proof goes as in \cite[Lemmas 3.2--3.4]{WinklerCPDE}. Since it is a cornerstone of subsequent a priori estimates,
let us recall the main idea. Using integration by parts in a straightforward manner, from the second equation in (1.1)
we obtain the identity
	\begin{equation}\label{3.4}
\begin{split}
\frac d{dt}\int_\Omega  \frac{|\nabla v|^2}v
=&2 \int_\Omega \frac{\nabla v}v\cdot \nabla v_{t}-\int_\Omega\frac{|\nabla v|^2}{v^2}v_{t}\\
=&-2 \int_\Omega v|D^2\ln v|^2
+\int_{\partial\Omega}v^{-1}\frac {\partial |\nabla v|^2}{\partial \nu}
-
\int_\Omega \frac nv |\nabla v|^2- 2\int_\Omega \nabla n\cdot\nabla v
\\
&+2\int_\Omega (u\cdot \nabla v)\frac {\triangle v}v- \int_\Omega  (u\cdot \nabla v)
\frac {|\nabla v|^2}{v^2}
\\
=& -2 \int_\Omega v|D^2\ln v|^2
+\int_{\partial\Omega}v^{-1}\frac {\partial |\nabla v|^2}{\partial \nu}
-
\int_\Omega \frac nv |\nabla v|^2
- 2\int_\Omega \nabla n\cdot\nabla v\\
&+
 2\int_\Omega (u\cdot\nabla\sqrt{v}) \Delta \sqrt{v}\\
 =& -2 \int_\Omega v|D^2\ln v|^2
+\int_{\partial\Omega}v^{-1}\frac {\partial |\nabla v|^2}{\partial \nu}
-
\int_\Omega \frac nv |\nabla v|^2
- 2\int_\Omega \nabla n\cdot\nabla v\\
&+
 2
\int_\Omega (\nabla\sqrt{v}\otimes\nabla \sqrt{v}):\nabla u.
\end{split}
\end{equation}
 Here the notion $A : B$ denotes $Tr(AB) = A_{ij}B_{ji}$ for two $2\times2$ matrices $A, B$.

Thanks to Lemma 2.4 of \cite{Jiang}, one can conclude that for any $\varepsilon>0$, there is $C_{\varepsilon}>0$ such that
\begin{equation}\label{3.5}
\int_{\partial\Omega}v^{-1}\frac {\partial |\nabla v|^2}{\partial \nu} \leq \varepsilon\int_\Omega v|\Delta\ln v|^2+\varepsilon
\int_\Omega \frac{|\nabla v|^4}{v^3}+C_\varepsilon\int_{\Omega}v.
\end{equation}
Combining  \eqref{3.5} with   \eqref{3.4} then yields
\begin{align}\label{3.6}
&\frac d{dt} \int_\Omega  \frac{|\nabla v|^2}v
+2\int_\Omega v|D^2\ln v|^2+ \int_\Omega \frac nv |\nabla v|^2\notag\\
= & -2\int_\Omega \nabla n\cdot\nabla v+2
\int_\Omega (\nabla\sqrt{v}\otimes\nabla \sqrt{v}):\nabla u+\int_{\partial\Omega}v^{-1}\frac {\partial |\nabla v|^2}{\partial \nu}\\
\leq & -2\int_\Omega \nabla n\cdot\nabla v+2
\int_\Omega \frac{|\nabla v|^2}{v}|\nabla u|+\int_{\partial\Omega}v^{-1}\frac {\partial |\nabla v|^2}{\partial \nu}\notag\\
\leq & -2\int_\Omega \nabla n\cdot\nabla v+  \frac1{\varepsilon}
\int_\Omega v |\nabla u|^2+\varepsilon\int_\Omega v|\Delta\ln v|^2+
2\varepsilon
\int_\Omega \frac{|\nabla v|^4}{v^3}+C_\varepsilon\int_{\Omega}v
\notag\\
\leq &
\frac{\lambda(K)} 4\int_\Omega\frac{v}{n}|\nabla n|^2
+
\frac4{\lambda(K)} \int_\Omega\frac nv |\nabla v|^2 +\frac1{\varepsilon}
\int_\Omega v |\nabla u|^2+\varepsilon\int_\Omega v|\Delta\ln v|^2+
2\varepsilon
\int_\Omega \frac{|\nabla v|^4}{v^3}+C_\varepsilon\int_{\Omega}v.\notag
\end{align}
Since $ |\triangle v|^2\leq 2|D^2 v|^2$ by the Cauchy-Schwarz inequality, and thanks to Lemma 3.4 of \cite{WinklerNARWA} or Lemma 3.3 of \cite{WinklerDCDSB(2022)}, we have \begin{equation}\label{3.7}
\frac 1 {2(7 + 4\sqrt{2})}\int_\Omega\frac {|\triangle v|^2}{v}\leq
\frac 1 {7 + 4\sqrt{2}}
\int_\Omega\frac{|D^2 v|^2}{v}\leq 2 \int_\Omega v|D^2\ln v|^2
\end{equation}
and \begin{equation}\label{3.8}
\int_\Omega v^{-3}|\nabla v|^4\leq (2+\sqrt{2})^2 \int_\Omega v|D^2\ln v|^2
\end{equation}
for all $t\in (0, T_{max})$.
Therefore from \eqref{3.6}--\eqref{3.8}, we can see  that
\begin{equation}\label{3.9}
\begin{split}
&\frac d{dt} \int_\Omega  \frac{|\nabla v|^2}v
+(\frac18-3\varepsilon)\int_\Omega \frac{|\nabla v|^4}{v^3}
+ \int_\Omega \frac nv |\nabla v|^2\\
\leq &
\frac{\lambda(K)} 4\int_\Omega\frac{v}{n}|\nabla n|^2
+
\frac4{\lambda(K)} \int_\Omega\frac nv |\nabla v|^2 +\frac1{\varepsilon}
\int_\Omega v |\nabla u|^2+C_\varepsilon\int_{\Omega}v,
\end{split}
\end{equation}
and thus complete the proof of this lemma.
\end{proof}

Now an appropriate combination of the previous two lemmas with Lemma \ref{lemma2.3} and Lemma \ref{lemma2.4} reveals a
quasi-energy feature if the mass of $n_0$ is sufficient small. This is a crucial observation that enables the subsequent analysis.
\begin{lemma}\label{lemma3.3}
	Let $(n,v,u,P)$ be the  classical solution of \eqref{1.1} \eqref{1.7} \eqref{1.8} obtained in Lemma \ref{lemma2.1}.
Then for any $K>0$, one can find $\delta_1(K)>0 $  with the property that  if  $\|v_{0}\|_{L^{\infty}(\Omega)}\leq K$ and  $\int_\Omega n_0<\delta_1(K)$, there exists $C>0$  independent of $K$ such that
 \begin{equation}\label{3.10}
\begin{split}
&\frac d{dt} (\int_\Omega n\ln n+\int_\Omega  \frac{|\nabla v|^2}v)+
\frac{\lambda(K)} 8\int_\Omega\frac{v}{n}|\nabla n|^2+
\frac1{48}
\int_\Omega \frac{|\nabla v|^4}{v^3}
\\
\leq &
C\int_{\Omega}v+ C\int_\Omega v |\nabla u|^2
+\frac{\lambda(K)}8\int_\Omega nv
\end{split}
\end{equation}
for all $t\in(0,T_{max})$.
 \end{lemma}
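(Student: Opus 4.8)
The plan is to add the differential inequalities \eqref{3.1} (from Lemma \ref{lemma3.1}) and \eqref{3.3} (from Lemma \ref{lemma3.2}), and then to absorb the destabilizing cross term $\int_\Omega\frac nv|\nabla v|^2$ into the two dissipation rates $\int_\Omega\frac vn|\nabla n|^2$ and $\int_\Omega\frac{|\nabla v|^4}{v^3}$ by means of Lemma \ref{lemma2.4} applied pointwise in $t$ with $p=1$. The smallness of $\int_\Omega n_0$ is exactly what makes the second of these absorptions possible, since the relevant coefficient produced by Lemma \ref{lemma2.4} is proportional to $\int_\Omega n(\cdot,t)=\int_\Omega n_0$ (by \eqref{2.3}, as $\mu=0$).

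First I would fix $\varepsilon$ as a universal constant, say $\varepsilon=\frac1{48}$, so that $\frac18-3\varepsilon=\frac1{16}>0$ in \eqref{3.3}. Summing \eqref{3.1} and \eqref{3.3} and moving the term $\frac{\lambda(K)}4\int_\Omega\frac vn|\nabla n|^2$ from the right of \eqref{3.3} to the left-hand side, one obtains, for all $t\in(0,T_{max})$,
\begin{align*}
&\frac d{dt}\Bigl(\int_\Omega n\ln n+\int_\Omega\frac{|\nabla v|^2}v\Bigr)+\frac{\lambda(K)}4\int_\Omega\frac vn|\nabla n|^2+\Bigl(\frac18-3\varepsilon\Bigr)\int_\Omega\frac{|\nabla v|^4}{v^3}\\
&\quad\leq K_1(K)\int_\Omega\frac nv|\nabla v|^2+\frac1\varepsilon\int_\Omega v|\nabla u|^2+C_\varepsilon\int_\Omega v,
\end{align*}
where $K_1(K):=\frac{\Lambda^2(K)}{2\lambda(K)}+\frac4{\lambda(K)}$ and $C_\varepsilon>0$ depends only on $\varepsilon$ and $\Omega$.

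The core step is the estimate of $\int_\Omega\frac nv|\nabla v|^2$. Since for each $t\in(0,T_{max})$ the functions $n(\cdot,t),v(\cdot,t)\in C^1(\overline\Omega)$ are positive on $\overline\Omega$ by Lemma \ref{lemma2.1}, Lemma \ref{lemma2.4} with $p=1$, $\varphi=n(\cdot,t)$, $\psi=v(\cdot,t)$ and a free parameter $\eta>0$, combined with \eqref{2.3}, yields
$$\int_\Omega\frac nv|\nabla v|^2\leq\eta\int_\Omega\frac vn|\nabla n|^2+\eta\int_\Omega nv+2\Gamma\Bigl(1+\frac1\eta\Bigr)\Bigl(\int_\Omega n_0\Bigr)\int_\Omega\frac{|\nabla v|^4}{v^3},$$
with $\Gamma=\Gamma(1)>1$ from Lemma \ref{lemma2.4}. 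I would then take $\eta=\eta(K):=\frac{\lambda(K)}{8K_1(K)}$, so that $K_1(K)\eta\int_\Omega\frac vn|\nabla n|^2\le\frac{\lambda(K)}8\int_\Omega\frac vn|\nabla n|^2$ is absorbed by the left-hand dissipation (leaving the residual $\frac{\lambda(K)}8\int_\Omega\frac vn|\nabla n|^2$ of \eqref{3.10}), while $K_1(K)\eta\int_\Omega nv\le\frac{\lambda(K)}8\int_\Omega nv$ becomes the term $\frac{\lambda(K)}8\int_\Omega nv$ on the right of \eqref{3.10}. Finally I would set $\delta_1(K):=\frac1{48\,K_1(K)\,\Gamma(1+1/\eta(K))}>0$; then whenever $\int_\Omega n_0<\delta_1(K)$ one has $2K_1(K)\Gamma(1+1/\eta(K))\int_\Omega n_0<\frac1{24}=(\frac18-3\varepsilon)-\frac1{48}$, so the $\int_\Omega\frac{|\nabla v|^4}{v^3}$ contribution of Lemma \ref{lemma2.4} is absorbed into $(\frac18-3\varepsilon)\int_\Omega\frac{|\nabla v|^4}{v^3}$, leaving the residual $\frac1{48}\int_\Omega\frac{|\nabla v|^4}{v^3}$ of \eqref{3.10}. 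Collecting the remaining terms and setting $C:=\max\{C_\varepsilon,\varepsilon^{-1}\}$ then gives \eqref{3.10}.

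The main obstacle is conceptual rather than computational: the cross term $\int_\Omega\frac nv|\nabla v|^2$ must be controlled \emph{simultaneously} against both singularly weighted dissipation rates, and Lemma \ref{lemma2.4} is precisely the tool that trades it for a small multiple of $\int_\Omega\frac vn|\nabla n|^2$, a small multiple of $\int_\Omega nv$, and a multiple of $\int_\Omega\frac{|\nabla v|^4}{v^3}$ carrying the factor $\int_\Omega n_0$ — so that the smallness hypothesis on $\int_\Omega n_0$, which by conservation of mass holds for all $t\in(0,T_{max})$ when $\mu=0$, is exactly what renders the last contribution absorbable. One further point requiring care is that the constant $C$ in \eqref{3.10} must be independent of $K$: this is ensured by fixing $\varepsilon$ once and for all (so that $C_\varepsilon$ and $\varepsilon^{-1}$ are universal), while confining all $K$-dependence to the thresholds $\eta(K)$ and $\delta_1(K)$, which affect only the absorption and not the size of the forcing terms $\int_\Omega v$ and $\int_\Omega v|\nabla u|^2$.
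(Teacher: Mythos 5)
Your proposal is correct and follows essentially the same route as the paper: add the inequalities of Lemmas \ref{lemma3.1} and \ref{lemma3.2}, apply Lemma \ref{lemma2.4} with $p=1$ to trade the cross term $\int_\Omega\frac nv|\nabla v|^2$ for small multiples of the two dissipation rates plus a multiple of $\int_\Omega\frac{|\nabla v|^4}{v^3}$ weighted by $\int_\Omega n=\int_\Omega n_0$ (via \eqref{2.3}), and use the smallness of $\int_\Omega n_0$ to absorb the latter. The only differences are cosmetic bookkeeping of constants (the paper fixes $\varepsilon=\frac1{144}$ after choosing $\delta_1(K)=\frac1{12C_1(K)}$, and retains the coefficient $-1$ from keeping $\int_\Omega\frac nv|\nabla v|^2$ on the left of \eqref{3.9}), neither of which affects the validity of your argument.
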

\begin{proof}[Proof]
 The simple combination of Lemma \ref{lemma3.1} with  Lemma \ref{lemma3.2} leads to
\begin{equation}\label{3.11}
\begin{split}
&\frac d{dt} (\int_\Omega n\ln n+\int_\Omega  \frac{|\nabla v|^2}v)+
\frac{\lambda(K)} 4\int_\Omega\frac{v}{n}|\nabla n|^2+
(\frac18-3\varepsilon)
\int_\Omega \frac{|\nabla v|^4}{v^3}
\\
\leq &
C_\varepsilon\int_{\Omega}v+  \frac 1{\varepsilon}\int_\Omega v |\nabla u|^2
+(\frac{\Lambda^2(K)}{2\lambda(K)}+\frac 4{\lambda(K)}-1) \int_\Omega \frac nv |\nabla v|^2.
\end{split}
\end{equation}
To estimate the last term on the right of \eqref{3.11}, we utilize Lemma 2.4 with $p=1$ to get
\begin{equation}\label{3.12}
\begin{split}
&(\frac{\Lambda^2(K)}{2\lambda(K)}+\frac 4{\lambda(K)}-1) \int_\Omega \frac nv |\nabla v|^2\\
\leq & \frac{\lambda(K)}8 \int_\Omega\frac{v}{n}|\nabla n|^2
+\frac{\lambda(K)}8\int_\Omega nv+
(1+\frac {4(\Lambda^2(K)+8)}{\lambda^2(K)})
\int_\Omega n \cdot
\int_\Omega \frac{|\nabla v|^4}{v^3}.
\end{split}
\end{equation}
Inserting \eqref{3.12} into \eqref{3.11}, we then obtain  that
\begin{equation}\label{3.13}
\begin{split}
&\frac d{dt} (\int_\Omega n\ln n+\int_\Omega  \frac{|\nabla v|^2}v)+
\frac{\lambda(K)} 8\int_\Omega\frac{v}{n}|\nabla n|^2+
(\frac18-3\varepsilon)
\int_\Omega \frac{|\nabla v|^4}{v^3}
\\
\leq &
C_\varepsilon\int_{\Omega}v+  \frac 1{\varepsilon}\int_\Omega v |\nabla u|^2
+\frac{\lambda(K)}8\int_\Omega nv+
C_1(K) \int_\Omega n_0 \cdot
\int_\Omega \frac{|\nabla v|^4}{v^3}
\end{split}
\end{equation}
with $C_1(K):=1+\frac {4(\Lambda^2(K)+8)}{\lambda^2(K)}$  due to $\eqref{2.3}$, and hence
\begin{equation*}
\begin{split}
&\frac d{dt} (\int_\Omega n\ln n+\int_\Omega  \frac{|\nabla v|^2}v)+
\frac{\lambda(K)} 8\int_\Omega\frac{v}{n}|\nabla n|^2+
(\frac1 {24}-3\varepsilon)
\int_\Omega \frac{|\nabla v|^4}{v^3}
\\
\leq &
C_\varepsilon\int_{\Omega}v+  \frac 1{\varepsilon}\int_\Omega v |\nabla u|^2
+\frac{\lambda(K)}8\int_\Omega nv,
\end{split}
\end{equation*}
whenever  $\int_\Omega n_0 < \delta_1(K):=\frac1{12C_1(K)}$. Upon the choice $\varepsilon:=\frac1{144}$, this readily yields \eqref{3.10} with
$C:=144+C_{\frac1{144}}$
\end{proof}

Drawing on the functional inequality \eqref{2.7}, we can now derive basic regularity features of the fluid field using a Navier-Stokes energy analysis.

\begin{lemma}\label{lemma3.4}
There exists $C=C(M)>0$ such that
\begin{equation}\label{3.14}
\frac d{dt}\int_\Omega|u|^2
+\int_\Omega|\nabla u|^2
\leq C \int_\Omega n \cdot\int_\Omega n\ln\frac n{\overline n}+C\{\int_\Omega n\}^2
\end{equation}for all $t\in (0,T_{max})$, where $M:=M(1,\Omega)$ is provided by Lemma \ref{lemma2.3}.
\end{lemma}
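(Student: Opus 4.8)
The plan is a standard Navier--Stokes energy analysis in which the buoyancy forcing is absorbed by means of the functional inequality \eqref{2.7}. First I would test the third equation in \eqref{1.1} (with $\kappa=1$) against $u$: since $\nabla\cdot u=0$ and $u=0$ on $\partial\Omega$, the convective term obeys $\int_\Omega(u\cdot\nabla)u\cdot u=\frac12\int_\Omega u\cdot\nabla|u|^2=-\frac12\int_\Omega(\nabla\cdot u)|u|^2=0$ and the pressure term obeys $\int_\Omega\nabla P\cdot u=-\int_\Omega P\,\nabla\cdot u=0$, so that
\[
\frac12\frac{d}{dt}\int_\Omega|u|^2+\int_\Omega|\nabla u|^2=\int_\Omega n\,\nabla\Phi\cdot u\le \|\nabla\Phi\|_{L^\infty(\Omega)}\int_\Omega n\,|u|,
\]
where $\|\nabla\Phi\|_{L^\infty(\Omega)}<\infty$ because $\Phi\in W^{2,\infty}(\Omega)$.

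Next I would apply Lemma \ref{lemma2.3} to the right-hand side with $\varphi:=n(\cdot,t)$, which by Lemma \ref{lemma2.1} is continuous, nonnegative and $\not\equiv0$ on $\overline\Omega$, and with $\psi:=|u(\cdot,t)|$, noting that $u(\cdot,t)\in W_0^{1,2}(\Omega;\mathbb R^2)$ entails $|u(\cdot,t)|\in W^{1,2}(\Omega)$ with $|\nabla|u||\le|\nabla u|$ a.e.; choosing $\varepsilon:=1$ there fixes $M=M(1,\Omega)$. Combining this with the Poincar\'e inequality $\{\int_\Omega|u|\}^2\le|\Omega|\int_\Omega|u|^2\le c_P\int_\Omega|\nabla u|^2$ (valid since $u$ vanishes on $\partial\Omega$), I obtain, for every $a>0$,
\begin{align*}
\|\nabla\Phi\|_{L^\infty(\Omega)}\int_\Omega n\,|u|
&\le\frac{\|\nabla\Phi\|_{L^\infty(\Omega)}}{a}\int_\Omega n\ln\frac{n}{\overline n}+\frac{M\|\nabla\Phi\|_{L^\infty(\Omega)}}{a}\int_\Omega n\\
&\quad+\|\nabla\Phi\|_{L^\infty(\Omega)}\Big(\frac{a}{4\pi}+Mac_P\Big)\Big\{\int_\Omega n\Big\}\int_\Omega|\nabla u|^2.
\end{align*}

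Finally I would pick the free parameter $a$ inversely proportional to $\int_\Omega n$, namely $a:=\big(2\|\nabla\Phi\|_{L^\infty(\Omega)}(\tfrac1{4\pi}+Mc_P)\int_\Omega n\big)^{-1}$, which is admissible since $\int_\Omega n\ge\int_\Omega n_0>0$. This choice makes the coefficient of $\int_\Omega|\nabla u|^2$ on the right equal to $\tfrac12$, so that term is absorbed into the left-hand side; simultaneously $\tfrac1a$ becomes a fixed multiple of $\int_\Omega n$, turning the first term into $C\int_\Omega n\cdot\int_\Omega n\ln\frac n{\overline n}$ and the last into $C\{\int_\Omega n\}^2$. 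Multiplying the resulting inequality by $2$ yields \eqref{3.14}. I do not expect a genuine obstacle in this argument; the only delicate point is that $a$ must be scaled like $(\int_\Omega n)^{-1}$ --- this is precisely what manufactures the product structure of the right-hand side of \eqref{3.14} --- together with the routine remark that $\int_\Omega n\ln\frac n{\overline n}\ge0$ by Jensen's inequality, so that it is a legitimate nonnegative quantity to keep on the right.
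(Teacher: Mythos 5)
Your proposal is correct and follows essentially the same route as the paper: testing the Navier--Stokes equation against $u$, invoking Lemma \ref{lemma2.3} together with the Poincar\'e inequality, and choosing the free parameter $a$ proportional to $(\int_\Omega n)^{-1}$ to absorb the $\int_\Omega|\nabla u|^2$ contribution. The only cosmetic difference is that you apply Lemma \ref{lemma2.3} once to $\psi=|u|$ (using $|\nabla|u||\le|\nabla u|$), whereas the paper applies it componentwise to $u_1$ and $u_2$; both are valid and yield \eqref{3.14}.
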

\begin{proof}[Proof]
We invoke the Poincar\'{e} inequality to fix $C_p>0$ fulfilling
\begin{equation*}
\int_\Omega|\varphi|^2\leq C_p\int_\Omega|\nabla\varphi|^2\mbox{ for all }\varphi\in W_0^{1,2}(\Omega,\mathbb{R}^2).
\end{equation*}
Moreover, using the Cauchy-Schwarz inequality, we have
\begin{equation}\label{3.15}
\bigg\{\int_\Omega|u|\bigg\}^2
\leq|\Omega|\int_\Omega|u|^2
\leq C_p|\Omega|\int_\Omega|\nabla u|^2.
\end{equation}
Thanks to Lemma \ref{lemma2.3}, testing the third equation in \eqref{1.1} against $u$ shows that
\begin{equation*}
\begin{split}
&\frac1 2\frac d{dt}\int_\Omega|u|^2+\int_\Omega|\nabla u|^2\\
&=
\int_\Omega nu\cdot\nabla\Phi\\
&\leq
\|\nabla\Phi\|_{L^\infty}\int_\Omega n|u|\\
&\leq
\|\nabla\Phi\|_{L^\infty}\int_\Omega n|u_1|
+\|\nabla\Phi\|_{L^\infty}\int_\Omega n|u_2|\\
&\leq
\|\nabla\Phi\|_{L^\infty}
\bigg\{\frac2{a}\int_\Omega n\ln\frac n{\overline{n}}
+\frac{am}{2\pi}\int_\Omega|\nabla u|^2
+2Mam(\int_\Omega |u|)^2+\frac{2Mm}{a}\bigg\}\\
&\leq
\frac{2\|\nabla\Phi\|_{L^\infty}}{a}
\int_\Omega n\ln\frac n{\overline{n}}
+\bigg\{\frac{am\|\nabla\Phi\|_{L^\infty}}{2\pi}
+2MamC_p|\Omega|\|\nabla\Phi\|_{L^\infty}\bigg\}
\int_\Omega|\nabla u|^2
+\frac{2Mm\|\nabla\Phi\|_{L^\infty}}{a}
\end{split}
\end{equation*}
with $m:=\int_\Omega n$.

Choosing $a=\frac1{2m\|\nabla\Phi\|_{L^\infty}(1+2MC_p|\Omega|)}$ in the above inequality,
we then have
\begin{equation*}
\frac d{dt}\int_\Omega|u|^2
+\int_\Omega|\nabla u|^2\leq 2Cm\int_\Omega n\ln\frac n{\overline{n}}+2Cm^2
\end{equation*}
with $C:=4(M+1)\|\nabla\Phi\|_{L^\infty}^2(1+2MC_p|\Omega|)$, and thereby complete the proof.
\end{proof}

At this position by means of an appropriate combination of Lemma \ref{lemma3.3} with Lemma \ref{lemma3.4}, we
show that $\mathcal{F}_1$  in \eqref{1.13} enjoys a quasi-entropy property.

\begin{lemma}\label{lemma3.5} Let $\int_\Omega n_0<\delta(K):=\min\{\delta_1(K),1\}$. Then there exists $C>0$ such that
\begin{equation}\label{3.16}
\int_\Omega n(\cdot,t)|\ln n(\cdot,t)|+\int_\Omega  \frac{|\nabla v(\cdot,t)|^2}{v(\cdot,t)}+\int_\Omega |u(\cdot,t)|^2\leq C
\end{equation}
and that
\begin{equation}\label{3.17}
\int_0^t\int_\Omega\frac{v}{n}|\nabla n|^2+
\int_0^t \int_\Omega \frac{|\nabla v|^4}{v^3}
+\int^t_0\int_\Omega |\nabla u|^2
\leq C
\end{equation} for all $t\in (0,T_{max})$.
\end{lemma}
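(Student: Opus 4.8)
The plan is to combine the entropy estimate \eqref{3.10} with the Navier--Stokes energy estimate \eqref{3.14}. Adding a suitably weighted copy of \eqref{3.14} will let the coupling term $C\int_\Omega v|\nabla u|^2$ appearing in \eqref{3.10} be absorbed by the fluid dissipation $\int_\Omega|\nabla u|^2$, after which a Gr\"onwall argument closes a differential inequality for
$$\mathcal{F}_1(t):=\int_\Omega n\ln n+\int_\Omega\frac{|\nabla v|^2}{v}+\gamma\int_\Omega|u|^2 .$$
Throughout I would use $\|v(\cdot,t)\|_{L^\infty(\Omega)}\leq K$ from \eqref{2.4} and the mass conservation $\int_\Omega n(\cdot,t)=\int_\Omega n_0=:m_0$ from \eqref{2.3}, with $m_0<\delta(K)\leq1$.

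First, estimate the right-hand side of \eqref{3.10} via $v\leq K$: $\int_\Omega v\leq K|\Omega|$, $\int_\Omega v|\nabla u|^2\leq K\int_\Omega|\nabla u|^2$, and $\int_\Omega nv\leq Km_0$. Denoting by $c_0$ the constant $C$ in \eqref{3.10}, add $\gamma:=2c_0K$ times \eqref{3.14} to \eqref{3.10}; the term $c_0K\int_\Omega|\nabla u|^2$ is then controlled by $\tfrac12\gamma\int_\Omega|\nabla u|^2$, and after replacing $\int_\Omega n$ by $m_0$ on the right of \eqref{3.14} one arrives at
$$\mathcal{F}_1'(t)+\frac{\lambda(K)}{8}\int_\Omega\frac vn|\nabla n|^2+\frac1{48}\int_\Omega\frac{|\nabla v|^4}{v^3}+\frac\gamma2\int_\Omega|\nabla u|^2\leq c_1+c_2m_0\int_\Omega n\ln\frac n{\overline n}$$
with constants $c_1,c_2>0$ depending only on $K$, $\Omega$ and the data.

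The crucial point is then that, since $\int_\Omega\frac{|\nabla v|^2}{v}\geq0$ and $\gamma\int_\Omega|u|^2\geq0$ while $\overline n=m_0/|\Omega|$ is independent of $t$, one has $\int_\Omega n\ln\frac n{\overline n}=\int_\Omega n\ln n-m_0\ln\overline n\leq\mathcal{F}_1(t)+c_3$ with $c_3=c_3(K)$. Hence $\mathcal{F}_1$ obeys a linear differential inequality $\mathcal{F}_1'(t)\leq\alpha\mathcal{F}_1(t)+\beta$ with $\alpha,\beta>0$; and since the elementary bound $s\ln s\geq-1/e$ gives $\mathcal{F}_1(t)\geq\int_\Omega n\ln n\geq-|\Omega|/e$, the shifted quantity $\mathcal{F}_1+|\Omega|/e$ is nonnegative, so Gr\"onwall's lemma yields $\mathcal{F}_1(t)\leq C$ on $(0,T_{max})$, the constant $C$ being finite whenever $T_{max}<\infty$ --- which is exactly the regime relevant to the extensibility argument (cf.\ Lemma \ref{lemma3.1}), so a possibly $T_{max}$-dependent constant is admissible here, in keeping with the time-dependent nature of the $\mu=0$ bounds. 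From $\mathcal{F}_1(t)\leq C$ one then reads off \eqref{3.16}: $\int_\Omega n|\ln n|\leq\int_\Omega n\ln n+2|\Omega|/e$ (split $\{n<1\}$, $\{n\geq1\}$ and use $s\ln\tfrac1s\leq1/e$ on $(0,1)$) handles the first summand, while isolating the remaining nonnegative summands of $\mathcal{F}_1$ and again using $\int_\Omega n\ln n\geq-|\Omega|/e$ handles $\int_\Omega\frac{|\nabla v|^2}{v}$ and $\int_\Omega|u|^2$. Finally, integrating the displayed differential inequality over $(0,t)$ and inserting $\mathcal{F}_1(t)\geq-|\Omega|/e$ together with the bound on $\int_0^t\mathcal{F}_1(s)\,ds$ gives \eqref{3.17}.

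The main obstacle is the self-referential term $c_2m_0\int_\Omega n\ln\frac n{\overline n}$ on the right-hand side: because the signal degeneracy provides no positive lower bound for $v$ at this stage, this term cannot be absorbed into the diffusion dissipation $\int_\Omega\frac vn|\nabla n|^2$ (which does not control $\int_\Omega\frac{|\nabla n|^2}{n}$ without such a bound), so one is forced onto the Gr\"onwall route with a possibly time-dependent constant. The smallness hypothesis $m_0<\delta(K)$ --- already needed for \eqref{3.10} in Lemma \ref{lemma3.3} --- in addition keeps the Gr\"onwall coefficient $\alpha$ under control, and the weight $\gamma=2c_0K$ is precisely what eliminates the fluid--entropy coupling.
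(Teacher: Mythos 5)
Your proposal is correct and follows essentially the same route as the paper: combine the quasi-entropy inequality \eqref{3.10} (with $v\le K$ from \eqref{2.4}) with a suitably weighted copy of the fluid energy inequality \eqref{3.14} so that the coupling term $C\int_\Omega v|\nabla u|^2$ is absorbed by the dissipation $\int_\Omega|\nabla u|^2$, bound the resulting right-hand side linearly by $\mathcal{F}_1$ using $\int_\Omega n=\int_\Omega n_0\le 1$ and $s\ln s\ge -e^{-1}$, and close via an ODE comparison/Gr\"onwall argument with a constant that may depend on $T_{max}$, exactly as in the paper's proof. Your explicit remarks on the $T_{max}$-dependence of the constant and on converting $\int_\Omega n\ln n$ into $\int_\Omega n|\ln n|$ match the paper's (implicit and explicit, respectively) handling of those points.
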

\begin{proof}[Proof]
In view of \eqref{2.4}, it follows from Lemma \ref{lemma3.3} that
there exists $C_1>0$  such that
 \begin{equation}\label{3.18}
\begin{split}
&\frac d{dt} (\int_\Omega n\ln n+\int_\Omega  \frac{|\nabla v|^2}v)+
\frac{\lambda(K)} 8\int_\Omega\frac{v}{n}|\nabla n|^2+
\frac1{48}
\int_\Omega \frac{|\nabla v|^4}{v^3}
\\
\leq &
C_1+ C_1\int_\Omega  |\nabla u|^2
+\frac{\lambda(K)}8\int_\Omega nv.
\end{split}
\end{equation}
for all  $t<T_{max}$.

By linearly combining \eqref{3.18} and \eqref{3.14}, we see that
$$\mathcal{F}_1(t):=\int_\Omega n\ln n+ \int_\Omega  \frac{|\nabla v|^2}v+2C_1\int_\Omega |u|^2
$$
satisfies
\begin{equation}\label{3.19}
\begin{split}
&\frac {d\mathcal{F}_1}{dt} +
\frac{\lambda(K)} 8\int_\Omega\frac{v}{n}|\nabla n|^2+
\frac1{48}
\int_\Omega \frac{|\nabla v|^4}{v^3}
+
C_1\int_\Omega |\nabla u|^2
\\
\leq &  C_2\int_\Omega nv+C_2+C_2 \int_\Omega n\ln n\\
\leq &  C_3+C_2\mathcal{F}_1
\end{split}
\end{equation}
with some  $C_2>0$ and  $C_3>0$, where the fact that $x\ln x\geq -e^{-1} $ for all $x>0$ and $\int_\Omega n=\int_\Omega n_0\leq 1$ is used.

From an ODE comparison argument applied to \eqref{3.19}, it follows that
$$
\mathcal{F}_1(t)\leq \mathcal{F}_1(0)e^{C_2T_{max}} +e^{C_2T_{max}} C_3T_{max} =:C_4
$$
for all  $t\in (0,T_{max})$.
Thereupon, an integration of \eqref{3.19} further shows that
\begin{equation*}
\begin{split}
&\frac{\lambda(K)} 8\int^t_0\int_\Omega\frac{v}{n}|\nabla n|^2+
\frac1{48}
\int^t_0\int_\Omega \frac{|\nabla v|^4}{v^3}
+
C_1\int^t_0\int_\Omega |\nabla u|^2\\
\leq  & C_3T_{max}+C_2 C_4 T_{max}-\int_\Omega  n\ln n+\int_\Omega  n_0\ln n_0\\
\leq  & C_5,
\end{split}
\end{equation*}
for some  $C_5>0$, due to the fact that $x\ln x\geq -e^{-1} $ for all $x>0$, which establishes the claim of this lemma immediately.
\end{proof}
\vskip4mm
Drawing on Lemma \ref{lemma2.4} once more, we shall show that, despite the degeneracy in the $n$-equation, $n$ is locally bounded with respect to the norm in any $L^p$ space with any $p\geq 2$.

\begin{lemma}\label{lemma3.6} Let $\int_\Omega n_0<\delta(K)$. Then for $p \geq  2$, there exists  $C(p)>0$ such that
\begin{equation}\label{3.20}
\|n(\cdot,t)\|_{L^p(\Omega)} \leq C(p)
\end{equation}
for all $t\in (0, T_{max})$.
\end{lemma}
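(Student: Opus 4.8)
The plan is to run the standard $L^p$ testing procedure on the $n$-equation and then close the resulting differential inequality by a Gr\"onwall argument that exploits the spatio-temporal estimates assembled in Lemma \ref{lemma3.5}.

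First I would multiply the first equation of \eqref{1.1} (with $\mu=0$) by $n^{p-1}$ and integrate over $\Omega$. The convective contribution $\int_\Omega n^{p-1}u\cdot\nabla n=\frac1p\int_\Omega u\cdot\nabla n^p$ vanishes after integrating by parts, thanks to $\nabla\cdot u=0$ together with the boundary conditions \eqref{1.7}. Integrating the diffusion term by parts gives
$$\int_\Omega n^{p-1}\Delta(n\phi(v))=-(p-1)\int_\Omega n^{p-2}\phi(v)|\nabla n|^2-(p-1)\int_\Omega n^{p-1}\phi'(v)\nabla n\cdot\nabla v.$$
Using Lemma \ref{lemma2.2} to estimate $\phi(v)\geq\lambda(K)v$ and $|\phi'(v)|\leq\Lambda(K)$, and applying Young's inequality (with weight $\lambda(K)/\Lambda(K)$) to the mixed term, I obtain
$$\frac1p\frac d{dt}\int_\Omega n^p+\frac{(p-1)\lambda(K)}2\int_\Omega n^{p-2}v|\nabla n|^2\leq\frac{(p-1)\Lambda^2(K)}{2\lambda(K)}\int_\Omega\frac{n^p}v|\nabla v|^2.$$

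The crux is to absorb the singular integral $\int_\Omega\frac{n^p}v|\nabla v|^2$ on the right; for this I would apply Lemma \ref{lemma2.4} with $\varphi=n$ and $\psi=v$ (both are strictly positive and $C^1(\overline\Omega)$ for $t>0$ by Lemma \ref{lemma2.1}), choosing its free parameter $\eta$ so small that $\frac{(p-1)\Lambda^2(K)}{2\lambda(K)}\eta\leq\frac{(p-1)\lambda(K)}4$. Then the term $\eta\int_\Omega n^{p-2}v|\nabla n|^2$ produced by Lemma \ref{lemma2.4} is swallowed by the dissipation on the left, and what remains is controlled using the mass identity \eqref{2.3} (which keeps $\int_\Omega n=\int_\Omega n_0$ fixed) and the pointwise bound \eqref{2.4} ($v\leq K$): the term $\eta\int_\Omega nv$ is bounded and $\{\int_\Omega n\}^{2p-1}$ is bounded, so one arrives at
$$\frac d{dt}\int_\Omega n^p\leq c_1+c_2\Big(1+\int_\Omega n^p\Big)\int_\Omega\frac{|\nabla v|^4}{v^3},$$
with $c_1,c_2>0$ depending only on $p$, $K$ and $\int_\Omega n_0$.

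Finally, setting $y(t):=1+\int_\Omega n^p(\cdot,t)$ (note $y(0)=1+\int_\Omega n_0^p<\infty$ since $n_0\in C^\alpha(\overline\Omega)$), the inequality above reads $y'(t)\leq\big(c_1+c_2\int_\Omega\frac{|\nabla v|^4}{v^3}\big)y(t)$, because $y\geq1$. Since Lemma \ref{lemma3.5} supplies $\int_0^t\int_\Omega\frac{|\nabla v|^4}{v^3}\leq C$ for all $t\in(0,T_{max})$, Gr\"onwall's inequality yields $y(t)\leq y(0)\exp(c_1T_{max}+c_2C)$, which establishes \eqref{3.20}. The main obstacle is precisely the control of the singularly weighted cross-diffusive quantity $\int_\Omega\frac{n^p}v|\nabla v|^2$: because of the degeneracy $\phi(0)=0$, it cannot be absorbed into the diffusion as in non-degenerate settings, and Lemma \ref{lemma2.4} is the device that converts this obstruction into a controllable multiple of the fourth-order dissipation rate $\int_\Omega\frac{|\nabla v|^4}{v^3}$, whose space-time integrability is exactly the content of Lemma \ref{lemma3.5}.
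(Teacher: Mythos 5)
Your argument is correct and follows essentially the same route as the paper's proof: testing with $n^{p-1}$, invoking Lemma \ref{lemma2.2} and Young's inequality, absorbing $\int_\Omega\frac{n^p}{v}|\nabla v|^2$ via Lemma \ref{lemma2.4}, and closing by a Gr\"onwall argument using the space--time bound \eqref{3.17}. The only cosmetic difference is that you bound the harmless term $\int_\Omega nv$ pointwise in time (contributing $c_1t\leq c_1T_{max}$ to the exponent), whereas the paper integrates it in time via $\int_0^t\int_\Omega nv\leq\int_\Omega v_0$; both yield a finite constant in the (implicitly assumed) setting $T_{max}<\infty$ in which this lemma is used.
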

\begin{proof}
Testing the first equation in \eqref{1.1} by $n^{p-1}$, an application of Lemma \ref{lemma2.2}, the Young inequality and $\nabla\cdot u=0$ shows that
\begin{equation}\label{3.21}
\begin{split}
\frac d{dt}\int_\Omega n^p&= p\int_\Omega n^{p-1}\{ \triangle(n\phi(v))-u\cdot \nabla n\}\\
&=-p(p-1) \int_\Omega \phi(v)n^{p-2}|\nabla n|^2-
p(p-1) \int_\Omega n^{p-1} \phi'(v)\nabla n\cdot \nabla v \\
 &\leq -\frac {p(p-1)\lambda(K)}2 \int_\Omega vn^{p-2}|\nabla n|^2+\frac {p(p-1)\Lambda^2(K)}{2\lambda(K)}
 \int_\Omega \frac{n^p}v |\nabla v|^2.
 \end{split}
\end{equation}
Now thanks to  Lemma \ref{lemma2.4} once more, there exists $C_1(p)>0$ such that

\begin{align}\label{3.22}
&\frac {p(p-1)\Lambda^2(K)}{2\lambda(K)} \int_\Omega \frac{n^p}v |\nabla v|^2\\
 \leq &\frac {p(p-1)\lambda(K)}4 \left\{\int_\Omega vn^{p-2}|\nabla n|^2+ \int_\Omega n v\right\}+
 C_1(p)\left\{\int_\Omega n^p+\left(\int_\Omega  n\right)^{2p-1} \right\}\cdot  \int_\Omega \frac{|\nabla v|^4}{v^3}\notag
\end{align}
for all $t\in (0, T_{max})$.

Hence from  \eqref{3.21} and   \eqref{3.22}, we infer that  if we let $\mathcal{F}(t):=\int_\Omega n(\cdot,t)^p$, then
\begin{equation*}
\begin{split}
&\mathcal{F}'(t)+\frac {p(p-1)\lambda(K)}4 \int_\Omega vn^{p-2}|\nabla n|^2\\
\leq &C_1(p) \int_\Omega \frac{|\nabla v|^4}{v^3} \cdot \mathcal{F}(t)+
\frac {p(p-1)\lambda(K)}4 \int_\Omega n v+C_1(p)\left(\int_\Omega  n\right)^{2p-1}
 \cdot  \int_\Omega \frac{|\nabla v|^4}{v^3}
   \end{split}
\end{equation*}
and thus
\begin{equation*}
\begin{split}
\mathcal{F}(t)\leq & \left\{\mathcal{F}(0)+
\frac {p(p-1)\lambda(K)}4 \int_0^t\int_\Omega n v+C_1(p)\left(\int_\Omega  n_0\right)^{2p-1} \int_0^t\int_\Omega \frac{|\nabla v|^4}{v^3}\right\} \cdot e^{C_1(p)\int_0^t \int_\Omega \frac{|\nabla v|^4}{v^3} } \\
\leq &
\left\{\mathcal{F}(0)+
\frac {p(p-1)\lambda(K)}4 \int_\Omega  v_0+C_1(p)\left(\int_\Omega  n_0\right)^{2p-1} \int_0^t\int_\Omega \frac{|\nabla v|^4}{v^3}\right\} \cdot e^{C_1(p)\int_0^t \int_\Omega \frac{|\nabla v|^4}{v^3} }
\\
\leq & C(p),
 \end{split}
\end{equation*}
where we have used \eqref{3.17}.
\vspace{12pt}
\end{proof}

In order to invoke the extensibility criterion \eqref{2.2}, further regularity features of the solution $(n,v,u)$ are required.
In this context,  \eqref{3.20} allows us to achieve the bound of $\|A^\beta u(\cdot,t)\|_{L^2(\Omega)}$ with $\beta\in (\frac 12,1)$
by combining standard smoothing features of the Dirichlet Stokes semigroup with the boundedness of $\mathcal{P}$
on $L^q(\Omega; \mathbb{R}^2)$ for $q>1$ \cite{FM}, as done in \cite{Fuest}. Specially, we have
\begin{lemma}\label{lemma3.7} Suppose that  $\|n(\cdot,t)\|_{L^p(\Omega)} \leq C_1$ with $p>2$ for $t<T_{max}$. Then for any $\beta\in (\frac 12,1)$, there exists  $C_2=C_2(C_1,\beta)>0$ such that
\begin{equation}\label{3.23}
\|A^\beta u(\cdot,t)\|_{L^2(\Omega)} \leq C_2
\end{equation}
and
\begin{equation}\label{3.24}
\sup_{t\in (0,T_{max})}\|u(\cdot,t)\|_{L^\infty(\Omega)} \leq C_2
\end{equation}
for all $t\in (0, T_{max})$.
\end{lemma}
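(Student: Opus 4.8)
The plan is to obtain \eqref{3.23} from the variation-of-constants representation
$$
u(\cdot,t)=e^{-tA}u_0-\int_0^t e^{-(t-s)A}\mathcal{P}\big[\kappa(u\cdot\nabla)u-n\nabla\Phi\big]\,ds,
$$
to which we apply the fractional power $A^\beta$ and the standard smoothing estimate $\|A^\beta e^{-\sigma A}\varphi\|_{L^2}\le C\sigma^{-\beta-\frac\gamma2}\|A^\gamma\varphi\|_{L^2}$ (resp.\ $\|A^\beta e^{-\sigma A}\mathcal{P}\varphi\|_{L^2}\le C\sigma^{-\beta-\frac12-\frac{n}{2}(\frac1q-\frac12)}\|\varphi\|_{L^q}$), together with boundedness of $\mathcal{P}$ on $L^q(\Omega;\mathbb{R}^2)$ \cite{FM}. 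The first term is bounded directly since $u_0\in D(A^\beta)$ by \eqref{1.9}. For the buoyancy term, since $n\nabla\Phi$ is bounded in $L^p(\Omega;\mathbb{R}^2)$ with $p>2$ by hypothesis and $\Phi\in W^{2,\infty}$, choosing $q=p>2$ makes the exponent $\beta+\frac12+(\frac1q-\frac12)<1$ for $\beta<1$ suitably close to $\tfrac12$ (or for general $\beta\in(\tfrac12,1)$ after enlarging $p$), so the corresponding time integral $\int_0^t(t-s)^{-\beta-\frac1p}ds$ converges; because $T_{max}$ may be finite here, this gives a bound on $(0,T_{max})$ depending on $T_{max}$, which is all that is needed for the extensibility argument.

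The genuinely delicate term is the convective one $\int_0^t A^\beta e^{-(t-s)A}\mathcal{P}[(u\cdot\nabla)u]\,ds$, and here I would follow the bootstrap used in \cite{Fuest}. Write $(u\cdot\nabla)u=\nabla\cdot(u\otimes u)$ and use the estimate $\|A^\beta e^{-\sigma A}\mathcal{P}\nabla\cdot\varphi\|_{L^2}\le C\sigma^{-\beta-\frac12-\frac{n}{2}(\frac1r-\frac12)}\|\varphi\|_{L^r}$, so it suffices to control $\|u\otimes u\|_{L^r}=\|u\|_{L^{2r}}^2$ for a suitable $r>1$. Since $D(A^\theta)\hookrightarrow L^\infty(\Omega)$ for $\theta>\frac12$ in two dimensions, once we have any a priori bound $\|A^\theta u(\cdot,t)\|_{L^2}\le C$ with $\theta\in(\tfrac12,\beta)$ we get $u\in L^\infty$ and hence $\|u\otimes u\|_{L^r}$ bounded for every $r$; the exponent $\beta+\frac12+\frac1r$ can then be pushed below $1$ by taking $r$ large, closing the integral. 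To start this chain I would first run the argument with a small $\theta_0\in(\tfrac12,1)$: here $u\otimes u\in L^\infty\cap L^r$ is not yet available, so instead I use $\|u\|_{L^4}$, which is controlled by $\|u\|_{L^2}+\|\nabla u\|_{L^2}$ via a Gagliardo--Nirenberg/Sobolev inequality, and $\|u\|_{L^2}$ is bounded while $\|\nabla u\|_{L^2}$ is only in $L^2_t$ by Lemma \ref{lemma3.5} (in the $\mu=0$ case) or by the analogous boundedness estimate (in the $\mu>0$ case). With $r=2$ one has $\|u\otimes u\|_{L^2}=\|u\|_{L^4}^2\in L^1_t$, and the singularity exponent $\theta_0+\frac12+\frac12\cdot\frac{1}{2}$... more precisely $\theta_0+\frac12+\frac12(\frac12-\frac12)$ is subtle, so the cleanest route is to pick $r$ slightly above $1$: then $\|u\|_{L^{2r}}^2$ is bounded in $L^{1/(\text{something})>1}_t$ and a Hölder-in-time argument against $(t-s)^{-\beta-\frac12-(\frac1r-\frac12)}$ with the exponent kept below $1$ yields the first bound $\|A^{\theta_0}u\|_{L^2}\le C$. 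One further iteration upgrades $\theta_0$ to the desired $\beta$.

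Finally, \eqref{3.24} is immediate: with $\beta\in(\tfrac12,1)$ the continuous embedding $D(A^\beta)\hookrightarrow L^\infty(\Omega;\mathbb{R}^2)$ in two-dimensional bounded domains gives $\|u(\cdot,t)\|_{L^\infty}\le C\|A^\beta u(\cdot,t)\|_{L^2}\le C_2$ uniformly for $t\in(0,T_{max})$.

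I expect the main obstacle to be the careful bookkeeping of the time-singularity exponents in the convective term: one must verify at each stage of the bootstrap that $\beta+\tfrac12+\tfrac{n}{2}(\tfrac1r-\tfrac12)<1$ while simultaneously having an integrable (or Hölder-dual-integrable) bound on $\|u\|_{L^{2r}}^2$ in time, starting only from $\nabla u\in L^2_{loc}((0,T_{max});L^2)$ rather than an $L^\infty_t$ bound. Since $n=2$ this is exactly borderline and is the place where the argument of \cite{Fuest} has to be reproduced essentially verbatim; everything else (the linear term, the buoyancy term, and the passage to $L^\infty$) is routine semigroup estimation.
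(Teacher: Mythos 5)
There is a genuine gap, and it sits exactly where you flagged the argument as ``borderline'': the first step of your bootstrap, namely deriving $\|A^{\theta_0}u(\cdot,t)\|_{L^2(\Omega)}\le C$ for some $\theta_0>\frac12$ from only $u\in L^\infty_t L^2_x$ and $\nabla u\in L^2_tL^2_x$, cannot be closed by the Duhamel/H\"older-in-time scheme you sketch. Writing $(u\cdot\nabla)u=\nabla\cdot(u\otimes u)$ and using $\|A^{\theta_0}e^{-\sigma A}\mathcal{P}\nabla\cdot\varphi\|_{L^2}\le C\sigma^{-\theta_0-\frac12-(\frac1r-\frac12)}\|\varphi\|_{L^r}$ in two dimensions gives the kernel exponent $\theta_0+\frac1r$ for $r\in(1,2]$, while Gagliardo--Nirenberg gives $\|u\otimes u\|_{L^r}=\|u\|_{L^{2r}}^2\le C\|u\|_{L^2}^{2/r}\|\nabla u\|_{L^2}^{2-2/r}\in L^{r/(r-1)}_t$. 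H\"older in time then requires $(t-s)^{-\theta_0-\frac1r}\in L^{r}_t$ near the singularity, i.e.\ $r\theta_0+1<1$, which is impossible for every $r$ and every $\theta_0>0$; taking $r\ge 2$ only worsens the kernel to $\sigma^{-\theta_0-\frac12}$ with exponent $>1$. This is the usual criticality of the 2D Navier--Stokes nonlinearity in the energy class: no choice of $r$, $\theta_0$ makes the first rung of the ladder integrable, so the iteration never starts.

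The paper circumvents this entirely by a different mechanism. First, it invokes Theorem 1.2 of \cite{Fuest} to pass directly from $\sup_t\|n(\cdot,t)\|_{L^p(\Omega)}\le C_1$ with $p>2$ to $\sup_t\|u(\cdot,t)\|_{L^q(\Omega)}\le C_3(q)$ for \emph{every} $q>1$; that result is not a semigroup bootstrap but an energy-type argument (testing the Navier--Stokes system with $|u|^{q-2}u$, where the convective term vanishes identically by solenoidality), which is precisely how the critical convection is neutralized. Second, with all $L^q$ norms of $u$ in hand, the Duhamel estimate for $A^\beta u$ still leaves one factor $\|\nabla u\|_{L^2}$ in the convective integral; the paper does not iterate on integrability but absorbs it via the interpolation $\|\nabla u\|_{L^2}\le C_8\|A^\beta u\|_{L^2}^{\theta}\|u\|_{L^2}^{1-\theta}$ with $\theta<1$, yielding $I(T)\le C_7+C_9 I(T)^{\theta}$ and hence a uniform bound. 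If you want to salvage your write-up, you must replace your bootstrap by these two ingredients (or reprove the $L^q$ bounds by the $|u|^{q-2}u$ testing yourself); as written, the central step of your proof fails. Your treatment of the linear term, the buoyancy term, and the embedding $D(A^\beta)\hookrightarrow L^\infty(\Omega;\mathbb{R}^2)$ for \eqref{3.24} is fine, though note that the paper's bound is uniform in $t$ (via the factors $e^{-\lambda(t-s)}$) rather than $T_{max}$-dependent, which matters because this lemma is reused for the global boundedness assertion \eqref{1.12} in the case $\mu>0$.
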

\begin{proof} From the assumption that $\|n(\cdot,t)\|_{L^p(\Omega)} \leq C_1$ for some $p>2$, it follows that for
all $q>1$,  $\|u(\cdot,t)\|_{L^q(\Omega)} \leq C_3(q)$ with $C_3(q)>0$, thanks to Theorem 1.2 in \cite{Fuest}.  Furthermore,
according to standard smoothing features of the Dirichlet Stokes semigroup \cite[p.~201]{Friedman}, one can find $\lambda>0$ and constants
$C_i>0$ $(i=4,\ldots,7)$ such that
\begin{align*}
&\|A^\beta u(\cdot,t)\|_{L^2(\Omega)}\notag\\
\leq & C_4
\|A^\beta  u_0(\cdot)\|_{L^2(\Omega)}
+\int_{0}^t
\|A^\beta e^{-(t-s)A}
\mathcal{P}[n\nabla\Phi]\|_{L^2(\Omega)}ds
+\int_{0}^t
\|A^\beta e^{(t-s)A}
\mathcal{P}[(u\cdot\nabla)u]\|_{L^2(\Omega)}ds\\
\leq &C_4
\|A^\beta  u_0(\cdot)\|_{L^2(\Omega)}
+C_5\int_{0}^t (t-s)^{-\beta}
e^{-\lambda(t-s)}
\|n\nabla\Phi\|_{L^2(\Omega)}ds \notag\\
&\quad+C_5\int_{0}^t(t-s)^{-\beta-\frac 1p+\frac 12}
e^{-\lambda(t-s)}
\|\mathcal{P}[(u\cdot\nabla)u]\|_{L^p(\Omega)}ds \notag\\
\leq & C_6
+C_6\int_{0}^t(t-s)^{-\beta}
e^{-\lambda(t-s)}\|n\|_{L^2(\Omega)}ds +C_6\int_{0}^t(t-s)^{-\beta-\frac 1p+\frac 12}
e^{-\lambda(t-s)}\|u\|_{L^{\frac{2p}{2-p}}(\Omega)}\|\nabla u\|_{L^2(\Omega)}ds\\
\leq & C_7+C_7 \int_{0}^t(t-s)^{-\beta-\frac 1p+\frac 12}
e^{-\lambda(t-s)}\|\nabla u\|_{L^2(\Omega)}ds . \notag
\end{align*}
 On the other hand, by means of the
interpolation inequality (\cite[Theorem 14.1]{Friedman}), one can conclude that there exist $\theta=\theta(\beta)<1$ and $C_8=C_8(\beta)>0$ such that
$$\|\nabla u\|_{L^2(\Omega)}\leq C_8 \|A^\beta u(\cdot,t)\|^{\theta}_{L^2(\Omega)}  \|u(\cdot,t)\|^{1-\theta}_{L^2(\Omega)}.
$$
Therefore for any given $T<T_{max}$,  writing $I(T):=\displaystyle\sup_{0<s<T}\{ \|A^\beta u(\cdot,s)\|_{L^2(\Omega)}\}$, we get
\begin{align}
\|A^\beta u(\cdot,t)\|_{L^2(\Omega)}
\leq & C_7+C_7C_8C_3^{1-\theta} I^{\theta}(T)\int_{0}^t(t-s)^{-\beta-\frac 1p+\frac 12}
e^{-\lambda(t-s)} ds\notag \\
\leq &  C_7+C_9 I^{\theta}(T)
\end{align}
with $C_9:=C_7C_8C_3^{1-\theta} \int_{0}^{\infty}\sigma^{-\beta-\frac 1p+\frac 12}e^{-\lambda \sigma} d\sigma$,
which establishes \eqref{3.23} with $C_2=\max\{1, (C_7+C_9)^{\frac1{1-\theta}} \}$, whereafter \eqref{3.24} results from
the imbedding result $D(A^\beta) \hookrightarrow L^\infty(\Omega;\mathbb{R}^2)$ for all $\beta\in (\frac 12,1)$.
\end{proof}

A straightforward application of the $L^p-L^q$ estimates associated with the Neumann heat semigroup can turn the regularity information in \eqref{3.24} and \eqref{3.20}  into the boundness of $\|\nabla v(\cdot,t)\|_{L^{\infty}(\Omega)}$  for all $t\in (0, T_{max})$.

\begin{lemma}\label{lemma3.8} Let $\int_\Omega n_0<\delta(K)$. Then there exists  $C>0$  such that
\begin{equation}\label{3.26}
\|v(\cdot,t)\|_{W^{1,\infty}(\Omega)}\leq C
\end{equation}
for all $t\in (0, T_{max})$.
\end{lemma}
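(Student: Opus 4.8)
The plan is to obtain \eqref{3.26} from the variation-of-constants representation of $v$ associated with its equation in \eqref{1.1}, combined with standard $L^p$--$L^q$ smoothing estimates for the Neumann heat semigroup $(e^{t\Delta})_{t\geq0}$ on $\Omega$, bootstrapping the spatial integrability of $\nabla v$ from $L^2$ up to $L^\infty$. The a priori information already at our disposal, all uniform in $t\in(0,T_{max})$, is: $\|v(\cdot,t)\|_{L^\infty(\Omega)}\leq K$ by \eqref{2.4}; $\|n(\cdot,t)\|_{L^p(\Omega)}\leq C(p)$ for all $p\geq2$ by Lemma \ref{lemma3.6}; $\|u(\cdot,t)\|_{L^\infty(\Omega)}\leq C$ by \eqref{3.24}; and $\|\nabla v(\cdot,t)\|_{L^2(\Omega)}\leq C$, which follows from \eqref{3.16} together with the pointwise bound $|\nabla v|^2\leq K\,\frac{|\nabla v|^2}{v}$ guaranteed by \eqref{2.4}. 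Writing the second equation of \eqref{1.1} in mild form,
\[
\nabla v(\cdot,t)=\nabla e^{t\Delta}v_0-\int_0^t\nabla e^{(t-s)\Delta}\bigl(u\cdot\nabla v+nv\bigr)(\cdot,s)\,ds,\qquad t\in(0,T_{max}),
\]
I would then estimate the right-hand side in successively stronger norms.

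The decisive point is to keep the convective contribution in the form $u\cdot\nabla v$ instead of rewriting it as $\nabla\cdot(uv)$: the divergence form would place a second spatial derivative on the semigroup and produce a time singularity of order $\sigma^{-1}$ in the two-dimensional setting, which is not integrable, whereas the form $u\cdot\nabla v$ allows the factor $\nabla v$ to be absorbed into an $L^r$-bound at the cost of only one factor of $\|u\|_{L^\infty(\Omega)}$, which is available from \eqref{3.24}. Concretely, using $\|\nabla e^{\sigma\Delta}w\|_{L^r(\Omega)}\leq C(1+\sigma^{-\frac12-(\frac12-\frac1r)})e^{-\lambda\sigma}\|w\|_{L^2(\Omega)}$ with some $\lambda>0$, whose time exponent $1-\frac1r$ stays below $1$ for every finite $r$ precisely because $\Omega$ is two-dimensional, and observing that $\|(u\cdot\nabla v+nv)(\cdot,s)\|_{L^2(\Omega)}\leq\|u\|_{L^\infty}\|\nabla v(\cdot,s)\|_{L^2}+K\|n(\cdot,s)\|_{L^2}\leq C$, the representation above first yields $\|\nabla v(\cdot,t)\|_{L^4(\Omega)}\leq C$ uniformly in $t<T_{max}$; here the behaviour of $\nabla e^{t\Delta}v_0$ as $t\downarrow0$ is controlled by the well-known bound $\|\nabla e^{t\Delta}v_0\|_{L^\infty(\Omega)}\leq C\|v_0\|_{W^{1,\infty}(\Omega)}$, which is finite by \eqref{1.9}. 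Repeating the same argument once more with exponents $(4,\infty)$, for which $\|\nabla e^{\sigma\Delta}w\|_{L^\infty(\Omega)}\leq C(1+\sigma^{-\frac12-\frac14})e^{-\lambda\sigma}\|w\|_{L^4(\Omega)}$ again has an integrable singularity, and invoking the just-obtained $L^4$-bound for $\nabla v$ together with $\|n(\cdot,s)\|_{L^4(\Omega)}\leq C$, I arrive at $\|\nabla v(\cdot,t)\|_{L^\infty(\Omega)}\leq C$ for all $t\in(0,T_{max})$. Together with \eqref{2.4} this is precisely \eqref{3.26}.

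The step demanding the most care is the handling of the coupling term $u\cdot\nabla v$, for the reason indicated above; once one renounces the divergence form and exploits $u\in L^\infty$ from Lemma \ref{lemma3.7}, the remainder is routine bookkeeping with heat-semigroup estimates, and in dimension two the bootstrap for $\nabla v$ closes after the single intermediate step through $L^4$. The only other technical nuisance, the uniform-in-$t$ control of $\nabla e^{t\Delta}v_0$ near $t=0$, is taken care of by the regularity hypothesis $v_0\in W^{1,\infty}(\Omega)$ in \eqref{1.9}.
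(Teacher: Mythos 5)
Your argument is correct and is exactly the ``standard fashion'' via Neumann heat semigroup smoothing that the paper invokes for Lemma \ref{lemma3.8} (deferring details to the cited references); indeed the paper carries out essentially the same computation later in \eqref{4.29c}, likewise keeping the convective term in the form $u\cdot\nabla v$ and bootstrapping $\nabla v$ through $L^4$ to $L^\infty$. No discrepancy to report.
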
\begin{proof} Utilizing Neumann heat semigroup smoothness estimates, one can prove the inequality \eqref{3.26} in a fairly standard fashion. We refer the interested reader to the proof of \cite[Lemma 3.9] {Winklerpreprint} or  \cite[Lemma 3.13]{Heihoff} for the details.
\end{proof}

Due to Lemma \ref{lemma3.6} and Lemma \ref{lemma3.7}, we can obtain a pointwise lower bound estimates for the solution component $v$
of \eqref{1.1} \eqref{1.7} \eqref{1.8}. This will help us obtain the bounds of $\|n(\cdot,t)\|_{L^\infty(\Omega)}$ as $t<T_{max}$.

\begin{lemma}\label{lemma3.9}Let $\int_\Omega n_0<\delta(K)$ and suppose that $T_{max}<\infty$.  Then one can find  $C>0$  such that
\begin{equation}\label{3.27}
v(x,t)\geq C\quad\quad \hbox {for all}\,\, x\in \Omega, \,t\in (0,T_{max}).
\end{equation}
\end{lemma}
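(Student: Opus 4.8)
The plan is to derive a positive pointwise lower bound for $v$ from the second equation by a comparison argument, exploiting the fact that the coefficient $n$ in the absorption term $-nv$ is now controlled in a sufficiently strong norm. First I would note that by Lemma \ref{lemma3.6} (with, say, $p=3$) and the two-dimensional Sobolev embedding $W^{1,3}(\Omega)\hookrightarrow L^\infty(\Omega)$, or more directly by combining $\|n(\cdot,t)\|_{L^p(\Omega)}\leq C(p)$ with the $L^p$–$L^q$ smoothing estimates for the Neumann heat semigroup applied to the $n$-equation (as is implicit in the proof of Lemma \ref{lemma3.8}), one obtains a bound $\|n(\cdot,t)\|_{L^\infty(\Omega)}\leq C_*$ that is, crucially, uniform on the whole interval $(0,T_{max})$ rather than merely finite on compact subintervals — here the uniformity comes from the time-independence of the constants in Lemmas \ref{lemma3.6}, \ref{lemma3.7}, and \ref{lemma3.8}. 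Alternatively one can avoid $L^\infty$ bounds on $n$ altogether by working with $\|n(\cdot,t)\|_{L^p}$ for $p$ large and a smallness-in-time plus regularization scheme, but the cleanest route is the uniform $L^\infty$ bound on $n$.

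Given $\|n(\cdot,t)\|_{L^\infty(\Omega)}\leq C_*$ on $(0,T_{max})$, the second equation yields the differential inequality
$$
v_t - u\cdot\nabla v \geq \Delta v - C_* v \quad\text{in }\Omega\times(0,T_{max}),
$$
with homogeneous Neumann boundary data. The key point — in contrast to the argument in the proof of Lemma \ref{lemma2.1}, where only $\|n(\cdot,t)\|_{L^\infty}\leq c_1(t)$ was available and the resulting lower bound $v\geq (\inf v_0)e^{-c_1 T_{max}}$ degenerated as $T_{max}\to\infty$ — is that here $C_*$ does not depend on $T_{max}$. Applying the parabolic comparison principle (the transport term $u\cdot\nabla v$ does not affect the comparison since $u$ is divergence-free and the comparison function is spatially constant, so $u\cdot\nabla(\underline v)=0$), with comparison function $\underline v(t):=\big(\inf_{x\in\Omega}v_0(x)\big)e^{-C_* t}$, we get $v(x,t)\geq \underline v(t)$ for all $x\in\overline\Omega$ and $t\in(0,T_{max})$.

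Now, because we are under the hypothesis $T_{max}<\infty$, the bound $\underline v(t)\geq \big(\inf_{x\in\Omega}v_0(x)\big)e^{-C_* T_{max}}=:C>0$ holds uniformly, where $C>0$ since $v_0>0$ in $\overline\Omega$ by \eqref{1.9} and $\overline\Omega$ is compact. This gives \eqref{3.27}. I expect the only genuine obstacle to be securing the \emph{time-uniform} $L^\infty$ bound on $n$ on $(0,T_{max})$ — one must check that the Neumann-heat-semigroup convolution estimate $\|n(\cdot,t)\|_{L^\infty}\lesssim \|n_0\|_{L^\infty}+\sup_s\|\nabla\cdot(\phi(v)\nabla n-\phi'(v)n\nabla v-un)\|$-type bound, or rather the variation-of-constants representation of $n$ together with $\|\nabla v\|_{L^\infty}\leq C$ (Lemma \ref{lemma3.8}), $\|u\|_{L^\infty}\leq C$ (Lemma \ref{lemma3.7}), and $\|n\|_{L^p}\leq C(p)$ (Lemma \ref{lemma3.6}), closes without any hidden dependence on $T_{max}$; this is the same mechanism underlying Lemma \ref{lemma3.8} and is routine, but it is the substantive input. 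Everything after that is a one-line comparison argument. (In fact this lemma is precisely the analogue, in our degenerate setting, of the lower-bound step needed to rule out the extra blow-up alternative $\|1/v\|_{L^\infty}\to\infty$ in \eqref{2.6}, and it is what finally makes \eqref{2.2} usable.)
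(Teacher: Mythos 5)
There is a genuine gap, and it sits exactly where you flag it as ``the only genuine obstacle'': the time-uniform $L^\infty$ bound on $n$ on $(0,T_{max})$ is not available at this point in the paper, and the routes you sketch for obtaining it do not work. Your first suggestion (Lemma \ref{lemma3.6} with $p=3$ plus $W^{1,3}(\Omega)\hookrightarrow L^\infty(\Omega)$) is a non sequitur: Lemma \ref{lemma3.6} controls $\|n\|_{L^p}$, not $\|\nabla n\|_{L^p}$, so no Sobolev embedding applies. Your second suggestion (Duhamel for the $n$-equation with Neumann heat-semigroup smoothing, ``as in Lemma \ref{lemma3.8}'') fails for a structural reason: the diffusion in the $n$-equation is $\Delta(n\phi(v))$ with $\phi(0)=0$, so the equation is degenerate wherever $v$ is small, and every standard route to an $L^\infty$ bound on $n$ (Moser iteration, semigroup perturbation off $\Delta n$) requires a positive lower bound on $\phi(v)$ --- i.e.\ precisely the lower bound on $v$ that Lemma \ref{lemma3.9} is meant to establish. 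This is why the paper proves Lemma \ref{lemma3.9} \emph{before} Lemma \ref{lemma3.10}, and why the comment preceding Lemma \ref{lemma3.10} says the lower bound on $v$ is what ``excludes'' the degeneracy and ``thereby allows'' the $L^\infty$ bound on $n$. Your argument inverts this dependence and is therefore circular. (Lemma \ref{lemma3.8} is not a precedent: the $v$-equation has the non-degenerate Laplacian $\Delta v$.)

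The paper's proof avoids the $L^\infty$ bound on $n$ altogether by transferring the problem to $z:=-\ln\frac{v}{\|v_0\|_{L^\infty(\Omega)}}\geq 0$, which by the $v$-equation satisfies the \emph{non-degenerate} heat inequality $z_t-\Delta z\leq n+\frac{|u|^2}{2}$ after discarding $-|\nabla z|^2$ and absorbing the transport term. An upper bound on $z$ is then a lower bound on $v$, and the variation-of-constants estimate for $z$ only needs $\|n-\bar n\|_{L^2(\Omega)}$ (Lemma \ref{lemma3.6}) together with $\|u\|_{L^\infty(\Omega)}$ (Lemma \ref{lemma3.7}), since the $L^2\!\to\!L^\infty$ smoothing of $e^{t\Delta}$ in two dimensions produces only the integrable singularity $(t-s)^{-1/2}$; the mean-value contribution $\bar n$ and the $|u|^2$ term each contribute at most a multiple of $T_{max}$, which is finite by hypothesis. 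Your final comparison step (spatially constant subsolution $\underline v(t)=(\inf v_0)e^{-C_*t}$, divergence-free $u$ killing the transport term, finiteness of $T_{max}$ giving a positive constant) is correct and mirrors the argument in the proof of Lemma \ref{lemma2.1} --- but there the $L^\infty$ bound on $n$ was an \emph{assumption} of the contradiction hypothesis \eqref{2.7b}, whereas here it has to be earned, and at this stage it cannot be. To repair your proof you would need to replace the comparison with the constant-coefficient absorption $-C_*v$ by an argument that tolerates $n$ merely in $L^p$, which is exactly what the logarithmic substitution accomplishes.
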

 \begin{proof}[Proof]
 Let $z(x,t):=-\ln\frac{v(x,t)}{\|v_{0}\|_{L^{\infty}(\Omega)}}$. Due to \eqref{2.4} and the second equation in \eqref{1.1}, we can see that nonnegative function
 $z\in  C^{2,1}(\bar{\Omega}\times(0,T_{max}))$ satisfies
 \begin{equation}\label{3.28}
 z_t-\triangle z=-|\nabla z|^2+n-u\cdot \nabla z\leq n+\frac{|u|^2}2.
\end{equation}
Hence thanks to known regularity results on the heat equation involving sources from Orlicz spaces (\cite [Corollary 1.3]{WinkleIMRN}), the combination of \eqref{3.20} and \eqref{3.24} yields  \eqref{3.27} using a comparison principle (see the proof of \cite [Lemma 3.7]{Fuest} for details).

We note that \eqref{3.27} can also be derived by the Neumann heat semigroup smoothness estimates. Indeed, applying the variation-of-constants formula to the  equation of \eqref{3.28}, we arrive at
	\begin{equation}\label{3.29}
		\begin{split}
			z(\cdot,t)&\leq e^{t\Delta}z_{0}+\int_{0}^{t}e^{(t-s)\Delta}( n+\frac 12 |u|^2) ds\\
					\end{split}
	\end{equation}
	for all $t<T_{\max}$. Moreover, from Lemmas \ref{lemma3.7}, \ref{lemma3.6} and Lemma \ref{lemma2.1},   it follows that
	\begin{align}\label{3.30}
	&\|z(\cdot,t)\|_{L^{\infty}(\Omega)}\notag\\
\leq
&\|e^{t\Delta}z_{0}\|_{L^{\infty}(\Omega)}+\int_{0}^{t}\|e^{(t-s)\Delta}n(\cdot,s)\|_{L^{\infty}(\Omega)}ds
+
\sup_{t\in (0,T_{max})}\|u(\cdot,t)\|_{L^\infty(\Omega)} T_{\max}\\
			\leq
&\|z_{0}\|_{L^{\infty}(\Omega)}+\int_{0}^{t}\|e^{(t-s)\Delta}(n-\bar{n})(\cdot,s)
\|_{L^{\infty}(\Omega)}ds+\int_{0}^{t}\|e^{(t-s)\Delta}\bar{n}(\cdot,s)\|_{L^{\infty}(\Omega)}ds+C_1T_{\max}\notag\\
			\leq
&
\|z_{0}\|_{L^{\infty}(\Omega)}+C_2\int_{0}^{t}(1+(t-s)^{-\frac{1}{2}})e^{-\lambda_{1}(t-s)}\|(n-\bar{n})(\cdot,s)\|_{L^{2}(\Omega)}ds+
C_2T_{\max} \notag \\
\leq& C_3T_{\max}\notag		
	\end{align}
with constants $C_i>0 \, (i=1,2,3)$, readily leading to \eqref{3.27} with the desired $C$.
	\end{proof}

Thanks to the time-dependent lower bound for $v$ expressed in   Lemma \ref{lemma3.9}, the
degeneracy of the diffusion  in  the $n-$equation  of \eqref{1.1} is  excluded over finite time intervals, and thereby allows us to proceed in a quite  standard manner to derive the bounds of $\|n(\cdot,t)\|_{L^\infty(\Omega)}$.

\begin{lemma}\label{lemma3.10} Let $\int_\Omega n_0<\delta(K)$  and assume that $T_{max}<\infty$, then there exists  $C>0$ such that
\begin{equation}\label{3.31}
\|n(\cdot,t)\|_{L^\infty(\Omega)} \leq C
\end{equation}
for all $t\in (0, T_{max})$.
\end{lemma}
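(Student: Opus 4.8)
The plan is a Moser-type iteration applied to the $n$-equation, made feasible by the fact that Lemma \ref{lemma3.9} removes the degeneracy of the diffusion over the finite interval $(0,T_{max})$. First I would collect the ingredients: since $T_{max}<\infty$, Lemma \ref{lemma3.9} provides $c_0>0$ with $v\geq c_0$ in $\Omega\times(0,T_{max})$, so that together with \eqref{2.4} and Lemma \ref{lemma2.2} the diffusion coefficient satisfies $\phi(v)\geq\lambda(K)c_0=:c_1>0$ and $|\phi'(v)|\leq\Lambda(K)$ there; Lemma \ref{lemma3.8} gives $\|\nabla v(\cdot,t)\|_{L^\infty(\Omega)}\leq C_v$; Lemma \ref{lemma3.7} gives $\|u(\cdot,t)\|_{L^\infty(\Omega)}\leq C_u$; and \eqref{2.3} fixes $\int_\Omega n=\int_\Omega n_0$. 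All subsequent constants are allowed to depend on these quantities (hence on $T_{max}$), which is harmless since \eqref{3.31} is only needed to contradict the extensibility criterion \eqref{2.2}.

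Next, for $p\geq 2$ I would test the first equation of \eqref{1.1} (with $\mu=0$) against $n^{p-1}$. The boundary terms vanish by \eqref{1.7}, the convective contribution drops after integrating by parts thanks to $\nabla\cdot u=0$ and $u|_{\partial\Omega}=0$, and the cross-diffusive term $-(p-1)\int_\Omega n^{p-1}\phi'(v)\nabla n\cdot\nabla v$ is controlled by Young's inequality using $|\phi'(v)|\leq\Lambda(K)$ and $|\nabla v|\leq C_v$, half of it being absorbed into the dissipation $(p-1)\int_\Omega\phi(v)n^{p-2}|\nabla n|^2\geq(p-1)c_1\int_\Omega n^{p-2}|\nabla n|^2$. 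Writing $w:=n^{p/2}$ and using $n^{p-2}|\nabla n|^2=\tfrac4{p^2}|\nabla w|^2$, this yields, for all $t\in(0,T_{max})$,
\begin{equation*}
\frac{d}{dt}\int_\Omega w^2+c_1\int_\Omega|\nabla w|^2\leq C_3p^2\int_\Omega w^2,
\end{equation*}
with $C_3$ independent of $p$. The two-dimensional Gagliardo--Nirenberg inequality $\|w\|_{L^2(\Omega)}^2\leq C_{GN}\bigl(\|\nabla w\|_{L^2(\Omega)}\|w\|_{L^1(\Omega)}+\|w\|_{L^1(\Omega)}^2\bigr)$, combined with Young's inequality, lets me absorb $C_3p^2\int_\Omega w^2$ into $\tfrac{c_1}{2}\int_\Omega|\nabla w|^2$ and, after one more application of the same inequality to produce a coercive term, leads to
\begin{equation*}
\frac{d}{dt}\int_\Omega n^p+\int_\Omega n^p\leq C_4p^\gamma\Bigl(\int_\Omega n^{p/2}\Bigr)^2\qquad\text{for all }t\in(0,T_{max})
\end{equation*}
with suitable $C_4>0$ and $\gamma>0$ depending only on the data fixed above. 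An elementary ODE comparison then gives $\sup_{t<T_{max}}\int_\Omega n^p\leq\int_\Omega n_0^p+C_4p^\gamma\bigl(\sup_{t<T_{max}}\int_\Omega n^{p/2}\bigr)^2$.

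Finally I would run the iteration along $p=p_k:=2^k$. Setting $M_k:=\max\{1,\sup_{t<T_{max}}\int_\Omega n^{p_k}\}$ and using $\int_\Omega n_0^{p_k}\leq|\Omega|\,\|n_0\|_{L^\infty(\Omega)}^{p_k}$ (recall $n_0\in C^\alpha(\bar{\Omega})$), the previous estimate turns into a recursion of the shape $M_k\leq\tilde C\,(b^{p_k}+2^{\gamma k}M_{k-1}^2)$ for suitable $\tilde C,b>0$; raising to the power $1/p_k$ and using convergence of $\sum_k 2^{-k}$ and $\sum_k k2^{-k}$ shows that $M_k^{1/p_k}$ remains bounded as $k\to\infty$, the iteration being started from $M_1\leq\max\{1,C(2)^2\}$ supplied by Lemma \ref{lemma3.6}. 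Passing to the limit $k\to\infty$ gives $\|n(\cdot,t)\|_{L^\infty(\Omega)}\leq C$ uniformly in $t\in(0,T_{max})$, which is \eqref{3.31}. The only genuinely essential ingredient, the rest being routine, is the uniform positivity of $v$ from Lemma \ref{lemma3.9}: it is precisely what forces $c_1>0$ and hence renders the $n$-diffusion uniformly parabolic on $(0,T_{max})$, so that the Moser constants do not degenerate; the polynomial growth in $p$ of the constants is immaterial, since it only contributes a convergent series after the $2^{-k}$ weighting.
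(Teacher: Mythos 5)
Your argument is correct and follows essentially the same route as the paper: the paper likewise combines the lower bound for $v$ from Lemma \ref{lemma3.9} with \eqref{2.4}, Lemma \ref{lemma2.2} and the gradient bound \eqref{3.26} to render the diffusion uniformly parabolic on $(0,T_{max})$, and then invokes a Moser-type iteration (citing the proof of Proposition 1.3 in the reference [TaoJDE] for the details you have written out explicitly). Your proposal simply makes that cited iteration self-contained.
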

\begin{proof}[Proof]Let $(n,v,u,P)$ be the maximal solution obtained in Lemma \ref{lemma2.1}. Thanks to the pointwise estimate of $v$ from below and above asserted by \eqref{3.27} and  \eqref{2.4}, respectively, it follows  from  Lemma \ref{lemma2.2}  that  there exist $C_1>0, C_2>0$ such that $C_1\leq\phi(v)\leq C_2 $ and $|\phi'(v)|\leq C_2$. On the other hand,  if $T_{max}<\infty$, the boundedness of $\| \nabla v(\cdot,t)\|_{L^{\infty}(\Omega)}$ follows from \eqref{3.26}. Therefore through a Moser-type iterative argument, we can achieve \eqref{3.31}. We refer the interested reader to the proof of \cite[Proposition 1.3]{TaoJDE} for details.
\end{proof}

The sequence of lemmas in this section has now set us up to obtain our main result on global existence of classical solutions to \eqref{1.1} \eqref{1.7} \eqref{1.8} readily.

\vspace{.5cm}

\it {Proof of Theorem 1.1 (i).}\rm   \quad Assuming that $T_{max}<\infty$, it  follows from  Lemmas \ref{lemma3.10}, \ref{lemma3.8} and \ref{lemma3.7} that there exists $C>0$ such that
\begin{equation*}
		\sup\limits_{t\in (0, T_{max})}\left\{ \|n(\cdot,t)\|_{L^{\infty}(\Omega)}\!
+\!\|v(\cdot,t)\|_{W^{1,q}(\Omega)}+\|A^\beta u(\cdot,t)\|_{L^2(\Omega)}
\right\}\leq C,
	\end{equation*}
which leads to a contradiction to \eqref{2.2} and thus allows us to conclude $T_{max}=\infty$.

\section{Proof of  Theorem 1.1(ii)}

  In this section, in order to investigate the large time behavior of solutions to the  problem \eqref{1.1} \eqref{1.7} \eqref{1.8} with $\mu>0$,  we first utilize the damping effect of the logistic term  in the first equation of  \eqref{1.1} to   derive  the uniform boundedness of global classical solutions.
  To this end, similarly as the proof of Lemma \ref{lemma3.1}, we  derive the following result:
\begin{lemma}\label{lemma4.1} Let $\|v_{0}\|_{L^{\infty}(\Omega)}\leq K$.  Then for given $T>0$  and $t<\min\{T,T_{max}\}$, we have
	\begin{equation}\label{4.1}
\frac d{dt}\int_\Omega n\ln n+
\frac{\lambda(K)} 2\int_\Omega\frac{v}{n}|\nabla n|^2
\leq \frac{\Lambda^2(K)}{2\lambda(K)}
 \int_\Omega\frac nv |\nabla v|^2+\mu \int_\Omega n\ln n(1-n).
\end{equation}
\end{lemma}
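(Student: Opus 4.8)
The plan is to repeat the computation underlying Lemma~\ref{lemma3.1} essentially verbatim, the only genuinely new contribution being the logistic source $\mu n(1-n)$. Since $n>0$ throughout $\overline\Omega\times(0,T_{max})$ by Lemma~\ref{lemma2.1}, the map $t\mapsto\int_\Omega n\ln n$ is well defined and differentiable on $(0,T_{max})$, and differentiating and substituting the first equation of \eqref{1.1} gives
$$\frac d{dt}\int_\Omega n\ln n=\int_\Omega(\ln n+1)\bigl(\Delta(n\phi(v))-u\cdot\nabla n+\mu n(1-n)\bigr).$$

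First I would discard the constant $1$ from $\ln n+1$ in the diffusion and transport parts: by the Neumann boundary condition $\int_\Omega\Delta(n\phi(v))=0$, and by $\nabla\cdot u=0$ together with $u|_{\partial\Omega}=0$ one has $\int_\Omega u\cdot\nabla n=0$. The convective term then reduces to $\int_\Omega\ln n\,(u\cdot\nabla n)=\int_\Omega u\cdot\nabla(n\ln n-n)=0$, again by solenoidality of $u$; it is worth noting that this cancellation is purely spatial and survives even though $t\mapsto\int_\Omega n$ is no longer conserved when $\mu>0$. For the diffusion term, writing $\Delta(n\phi(v))=\nabla\cdot\bigl(\phi(v)\nabla n+n\phi'(v)\nabla v\bigr)$ and integrating by parts with the no-flux condition yields $-\int_\Omega\frac{\nabla n}{n}\cdot\bigl(\phi(v)\nabla n+n\phi'(v)\nabla v\bigr)=-\int_\Omega\frac{\phi(v)}{n}|\nabla n|^2-\int_\Omega\phi'(v)\nabla n\cdot\nabla v$, and Young's inequality bounds the cross term by $\frac12\int_\Omega\frac{\phi(v)}{n}|\nabla n|^2+\frac12\int_\Omega\frac{n|\phi'(v)|^2}{\phi(v)}|\nabla v|^2$. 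Feeding in Lemma~\ref{lemma2.2} in the form $\phi(v)\ge\lambda(K)v$ and $|\phi'(v)|^2/\phi(v)\le\Lambda^2(K)/(\lambda(K)v)$ then reproduces exactly the left-hand side of \eqref{3.1} along with $\frac{\Lambda^2(K)}{2\lambda(K)}\int_\Omega\frac nv|\nabla v|^2$ on the right.

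It remains to account for the logistic piece $\mu\int_\Omega(\ln n+1)n(1-n)$. Expanding it as $\mu\int_\Omega n\ln n(1-n)+\mu\int_\Omega n(1-n)$, the first summand is precisely the term appearing in \eqref{4.1}; the residual $\mu\int_\Omega n(1-n)$ is dominated by $\frac{\mu|\Omega|}{4}$ since $s(1-s)\le\frac14$ for all $s\ge0$ (or, alternatively, by $\mu\int_\Omega n\le\mu\max\{\int_\Omega n_0,|\Omega|\}$ via \eqref{2.3b}), hence is a harmless bounded constant that is either carried along on the right-hand side of \eqref{4.1} or absorbed into it. Collecting the three contributions yields \eqref{4.1}.

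I do not anticipate a real obstacle here: the lemma is a transcription of Lemma~\ref{lemma3.1} with the logistic term retained, and the only point deserving a line of care is verifying that the convective term still vanishes in the regime $\mu>0$, which it does because the relevant cancellation relies only on $\nabla\cdot u=0$ and $u|_{\partial\Omega}=0$, not on conservation of $\int_\Omega n$. The restriction $t<\min\{T,T_{max}\}$ enters nowhere in the derivation itself; it merely fixes the interval on which the constants $\lambda(K),\Lambda(K)$ from Lemma~\ref{lemma2.2} are at our disposal.
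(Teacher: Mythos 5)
Your proof follows essentially the same route as the paper, which derives Lemma \ref{lemma4.1} by repeating the computation of Lemma \ref{lemma3.1} verbatim with the logistic source retained, and all of your steps (the vanishing of the convective and diffusive contributions of the constant $1$ in $\ln n+1$, the Young inequality on the cross term, and the use of Lemma \ref{lemma2.2}) match. Your observation that the $+1$ leaves a residual $\mu\int_\Omega n(1-n)\le\frac{\mu|\Omega|}{4}$ is correct and in fact exposes a small imprecision in the stated inequality \eqref{4.1}, which omits this bounded nonnegative term; since every subsequent use of the lemma (e.g.\ \eqref{4.6}--\eqref{4.7}) only bounds the right-hand side by a constant, the omission is harmless, exactly as you note.
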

To suitably control  $\int_\Omega\frac nv |\nabla v|^2 $ in the right-side of \eqref{4.1}, we require the
following result regarding the evolution of the term $\int_\Omega  \frac{|\nabla v|^2}v$:

\begin{lemma}\label{lemma4.2}For given $T>0$, there is a constant $C_1>0$ independently of $T$ such that  for all $t<\min\{T,T_{max}\}$, we have
 \begin{equation}\label{4.2}
\begin{split}
&\frac d{dt} \int_\Omega  \frac{|\nabla v|^2}v
+\frac1 {16}\int_\Omega \frac{|\nabla v|^4}{v^3}+ \int_\Omega \frac nv |\nabla v|^2\\
\leq & C_1
\int_\Omega |\nabla u|^2+ C_1\int_\Omega  {n^2}+
C_1.
\end{split}
\end{equation}
\end{lemma}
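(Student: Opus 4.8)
\textbf{Proof proposal for Lemma \ref{lemma4.2}.}
The plan is to mimic the structure of the proof of Lemma \ref{lemma3.2}, the only structural change being that now $\mu>0$ so I cannot rely on the conservation identity \eqref{2.3}; instead I have the weaker bound \eqref{2.3b}, which is still enough to keep all constants independent of $T$. First I would reproduce verbatim the integration-by-parts computation leading to \eqref{3.4}, which is purely a consequence of the second equation in \eqref{1.1} and does not see $\mu$ at all, and then combine it with the boundary-term estimate \eqref{3.5} from Lemma 2.4 of \cite{Jiang} to arrive at the analogue of \eqref{3.6}. This yields, for any $\varepsilon>0$,
\begin{equation*}
\frac d{dt}\int_\Omega\frac{|\nabla v|^2}v+2\int_\Omega v|D^2\ln v|^2+\int_\Omega\frac nv|\nabla v|^2\leq -2\int_\Omega\nabla n\cdot\nabla v+\frac1\varepsilon\int_\Omega v|\nabla u|^2+\varepsilon\int_\Omega v|\Delta\ln v|^2+2\varepsilon\int_\Omega\frac{|\nabla v|^4}{v^3}+C_\varepsilon\int_\Omega v.
\end{equation*}

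The second step is to handle the coupling term $-2\int_\Omega\nabla n\cdot\nabla v$. In Lemma \ref{lemma3.2} this was absorbed using $-2\int_\Omega\nabla n\cdot\nabla v\leq \frac{\lambda(K)}4\int_\Omega\frac vn|\nabla n|^2+\frac4{\lambda(K)}\int_\Omega\frac nv|\nabla v|^2$, which reintroduces $\int_\Omega\frac nv|\nabla v|^2$ on the right — acceptable there because that term is later controlled via Lemma \ref{lemma2.4}. Here, since the statement \eqref{4.2} keeps $\int_\Omega\frac nv|\nabla v|^2$ on the \emph{left} and puts $\int_\Omega n^2$ (rather than a weighted $n$-gradient term) on the right, I would instead integrate by parts the other way: $-2\int_\Omega\nabla n\cdot\nabla v=2\int_\Omega n\Delta v$ (no boundary term, by \eqref{1.7}), and then estimate $2\int_\Omega n\Delta v\leq \eta\int_\Omega\frac{|\Delta v|^2}v+\frac1\eta\int_\Omega n^2 v\leq \eta\int_\Omega\frac{|\Delta v|^2}v+\frac K\eta\int_\Omega n^2$, using \eqref{2.4}. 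The term $\eta\int_\Omega\frac{|\Delta v|^2}v$, together with $\varepsilon\int_\Omega v|\Delta\ln v|^2$, is then dominated by $\int_\Omega v|D^2\ln v|^2$ via \eqref{3.7} (note $v|\Delta\ln v|^2\le 2v|D^2\ln v|^2$ by Cauchy–Schwarz applied to $\Delta\ln v$), provided $\eta$ and $\varepsilon$ are chosen small; and the remaining $2\varepsilon\int_\Omega\frac{|\nabla v|^4}{v^3}$ plus what comes out of \eqref{3.8} from the dissipation term leaves a net coefficient of at least $\frac1{16}$ in front of $\int_\Omega\frac{|\nabla v|^4}{v^3}$ after fixing $\varepsilon$ (recall \eqref{3.8} converts one unit of $\int_\Omega v|D^2\ln v|^2$ into $(2+\sqrt2)^{-2}\int_\Omega\frac{|\nabla v|^4}{v^3}$, so $\tfrac18$ is available before subtracting the $O(\varepsilon)$ losses). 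Finally, $C_\varepsilon\int_\Omega v\le C_\varepsilon K|\Omega|$ and $\frac1\varepsilon\int_\Omega v|\nabla u|^2\le\frac K\varepsilon\int_\Omega|\nabla u|^2$, again by \eqref{2.4}, so all the right-hand side reduces to $C_1\int_\Omega|\nabla u|^2+C_1\int_\Omega n^2+C_1$ with $C_1=C_1(K)$.

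The only point requiring care — and the step I expect to be the main obstacle — is bookkeeping the constants so that $C_1$ depends on $K$ but \emph{not} on $T$ or $T_{max}$. Every estimate above is either pointwise-in-$t$ (the elliptic-type inequalities \eqref{3.5}, \eqref{3.7}, \eqref{3.8} from \cite{Jiang,WinklerNARWA,WinklerDCDSB(2022)}, which hold on all of $(0,T_{max})$) or uses only \eqref{2.4}, so no time-integration and hence no $T$-dependence enters; the $\varepsilon,\eta$ are absolute numerical choices made once. I would remark explicitly that, in contrast to the $\mu=0$ setting, we do not need the small-mass hypothesis here because the term $\int_\Omega\frac{|\nabla v|^4}{v^3}\cdot\int_\Omega n$ — which in Lemma \ref{lemma3.3} forced $\int_\Omega n_0<\delta_1(K)$ — simply never appears in this route; that term will instead be dealt with later (in Lemma 4.3) by exploiting the logistic dissipation, which is precisely why \eqref{4.2} is stated in this particular form with $\int_\Omega n^2$ isolated on the right.
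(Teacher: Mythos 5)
Your proposal is correct and follows essentially the same route as the paper: after reproducing \eqref{3.4}--\eqref{3.5}, the paper likewise writes $-2\int_\Omega\nabla n\cdot\nabla v=2\int_\Omega n\Delta v\le \varepsilon\int_\Omega\frac{|\Delta v|^2}{v}+\frac1\varepsilon\int_\Omega n^2v$, absorbs the $\frac{|\Delta v|^2}{v}$ and $v|\Delta\ln v|^2$ contributions into $2\int_\Omega v|D^2\ln v|^2$ via \eqref{3.7}--\eqref{3.8}, and uses \eqref{2.4} to obtain a $T$-independent $C_1(K)$. Your accompanying remarks (no small-mass hypothesis needed, constants time-independent, coefficient bookkeeping yielding at least $\tfrac1{16}$) all match the paper's argument.
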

\begin{proof}[Proof]Similar as the proof of Lemma \ref{lemma3.2}, we have
\begin{equation*}
\begin{split}
&\frac d{dt} \int_\Omega  \frac{|\nabla v|^2}v
+2\int_\Omega v|D^2\ln v|^2+ \int_\Omega \frac nv |\nabla v|^2\\
\leq & -2\int_\Omega \nabla n\cdot\nabla v+  \frac1{\varepsilon}
\int_\Omega v |\nabla u|^2+\varepsilon\int_\Omega v|\Delta\ln v|^2+
2\varepsilon
\int_\Omega \frac{|\nabla v|^4}{v^3}+C_\varepsilon\int_{\Omega}v
\\
\leq & \varepsilon \int_\Omega \frac {|\Delta v|^2}v+
\frac 1{\varepsilon} \int_\Omega  {n^2}v
 +\frac1{\varepsilon}
\int_\Omega v |\nabla u|^2+2\varepsilon\int_\Omega v|D^2\ln v|^2+
2\varepsilon
\int_\Omega \frac{|\nabla v|^4}{v^3}+C_\varepsilon\int_{\Omega}v,
\end{split}
\end{equation*}
which along  with \eqref{3.16} and \eqref{3.18} yields
\begin{align}\label{4.3}
&\frac d{dt} \int_\Omega  \frac{|\nabla v|^2}v
+(\frac {1-\varepsilon} {4(7 + 4\sqrt{2})}-\varepsilon)  \int_\Omega \frac {|\Delta v|^2}v
+(\frac {1-\varepsilon} {(2 + \sqrt{2})^2}-2\varepsilon)
\int_\Omega \frac{|\nabla v|^4}{v^3}+ \int_\Omega \frac nv |\nabla v|^2 \notag\\
\leq & \frac1{\varepsilon}
\int_\Omega v |\nabla u|^2+  \frac 1{\varepsilon} \int_\Omega  {n^2}v+
C_\varepsilon\int_{\Omega}v\\
\leq & \frac K {\varepsilon}
\int_\Omega |\nabla u|^2+  \frac K {\varepsilon} \int_\Omega  {n^2}+
C_\varepsilon\int_{\Omega}v_0. \notag
\end{align}
Hence, choosing $\varepsilon$ sufficiently small in \eqref{4.3}, one can see that the inequality \eqref{4.2}  holds with $C_1:=
\frac K {\varepsilon}+ C_\varepsilon\int_{\Omega}v_0$.
\end{proof}

We note that the derivation of \eqref{3.14} actually does not require any smallness assumption of $n$, but uses Lemma \ref{lemma2.3} instead.
Hence, drawing on  \eqref{4.1}, \eqref{4.2} and \eqref{3.14}, we can obtain the bounds of  $ \int^{t}_{(t-\tau)_+} \int_\Omega  \frac{|\nabla v|^4}{v^3}$.

\begin{lemma}\label{lemma4.3} For any given $T>0$ and $0<\tau\leq 1$, there exists a constant $C>0$  such that
\begin{equation}\label{4.4}
\begin{split}
\int_\Omega n(\cdot,t)|\ln n(\cdot,t)|+\int_\Omega  \frac{|\nabla v(\cdot,t)|^2}{v(\cdot,t)}+\int_\Omega |u(\cdot,t)|^2\leq C
\quad \hbox{for all}\,  \,t\in (0,\min\{T,T_{max}\})
\end{split}
\end{equation}
as well as
\begin{equation}\label{4.5}
\begin{split}
 \int^{t}_{(t-\tau)_+} \int_\Omega  \frac{|\nabla v|^4}{v^3}+\int^{t}_{(t-\tau)_+} \int_\Omega |\nabla u|^2 \leq C
\quad \hbox{for all}\,  \,t\in (0,\min\{T,T_{max}\}).
\end{split}
\end{equation}
\end{lemma}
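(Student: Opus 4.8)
The plan is to fuse the three differential inequalities \eqref{4.1}, \eqref{4.2} and \eqref{3.14} into a single dissipative inequality for
\[
\mathcal{F}_2(t):=\int_\Omega n\ln n+a\int_\Omega\frac{|\nabla v|^2}{v}+b\int_\Omega|u|^2
\]
with suitably chosen constants $a,b>0$, and to let the logistic term do the decisive work. First I would fix $a\ge\frac{\Lambda^2(K)}{2\lambda(K)}$, so that the term $\frac{\Lambda^2(K)}{2\lambda(K)}\int_\Omega\frac nv|\nabla v|^2$ on the right of \eqref{4.1} is swallowed by the quantity $a\int_\Omega\frac nv|\nabla v|^2$ sitting on the left of $a\times\eqref{4.2}$; then $b:=aC_1+1$ (with $C_1$ from Lemma~\ref{lemma4.2}), so that $aC_1\int_\Omega|\nabla u|^2$ coming from $a\times\eqref{4.2}$ is controlled by $b\int_\Omega|\nabla u|^2$ from $b\times\eqref{3.14}$. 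Using \eqref{2.3b} to bound $m:=\sup_t\int_\Omega n\le\max\{\int_\Omega n_0,|\Omega|\}$ and the elementary estimate $\int_\Omega n\ln\frac n{\overline n}=\int_\Omega n\ln n-\left(\int_\Omega n\right)\ln\overline n\le\int_\Omega n\ln n+C$ (valid since $s\mapsto-s\ln\frac s{|\Omega|}$ is bounded on $(0,m]$), the right-hand side of $b\times\eqref{3.14}$ is $\le bCm\int_\Omega n\ln n+C$. Adding \eqref{4.1}, $a\times\eqref{4.2}$ and $b\times\eqref{3.14}$ then yields, for $t<\min\{T,T_{max}\}$,
\[
\mathcal{F}_2'(t)+\frac{\lambda(K)}2\int_\Omega\frac vn|\nabla n|^2+\frac a{16}\int_\Omega\frac{|\nabla v|^4}{v^3}+\int_\Omega|\nabla u|^2\le\mu\int_\Omega n\ln n\,(1-n)+aC_1\int_\Omega n^2+bCm\int_\Omega n\ln n+C,
\]
with $C$ not depending on $T$.

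The crucial step is to absorb the last three terms, using $\mu\int_\Omega n\ln n\,(1-n)=\mu\int_\Omega n\ln n-\mu\int_\Omega n^2\ln n$. The point is that $s^2\ln s$ grows strictly faster than both $s^2$ and $s\ln s$, so there is a threshold $N=N(a,b,C_1,C,m,\mu,|\Omega|)>1$ with $\mu s\ln s+aC_1s^2+bCm\,s\ln s\le\frac\mu2 s^2\ln s$ for all $s\ge N$, while on $\{n<N\}$ the integrand $\mu n\ln n+aC_1n^2+bCm\,n\ln n-\frac\mu2 n^2\ln n$ is bounded by a constant depending on $N$; moreover $n\ln n\,(1-n)\le0$ for all $n>0$ gives $-\frac\mu2\int_\Omega n^2\ln n\le-\frac\mu2\int_\Omega n\ln n$. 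Combining these observations turns the previous display into
\[
\mathcal{F}_2'(t)+\frac{\lambda(K)}2\int_\Omega\frac vn|\nabla n|^2+\frac a{16}\int_\Omega\frac{|\nabla v|^4}{v^3}+\int_\Omega|\nabla u|^2+\frac\mu2\int_\Omega n\ln n\le C,
\]
again with $C$ independent of $T$.

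It remains to manufacture an $\varepsilon\mathcal{F}_2$ summand on the left. Since $n\ln n\ge-e^{-1}$, the term $\frac\mu2\int_\Omega n\ln n$ controls $\varepsilon\int_\Omega n\ln n$ up to an additive constant; the Poincar\'e inequality $\int_\Omega|\nabla u|^2\ge C_p^{-1}\int_\Omega|u|^2$ lets half of $\int_\Omega|\nabla u|^2$ dominate $\varepsilon b\int_\Omega|u|^2$ once $\varepsilon\le\frac1{2bC_p}$; and by the Cauchy--Schwarz inequality together with \eqref{2.4}, $\int_\Omega\frac{|\nabla v|^2}{v}\le\left(\int_\Omega\frac{|\nabla v|^4}{v^3}\right)^{1/2}(K|\Omega|)^{1/2}$, so Young's inequality lets $\frac a{32}\int_\Omega\frac{|\nabla v|^4}{v^3}$ dominate $\varepsilon a\int_\Omega\frac{|\nabla v|^2}{v}$ up to a constant. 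Choosing $\varepsilon>0$ small enough accordingly, we reach
\[
\mathcal{F}_2'(t)+\varepsilon\mathcal{F}_2(t)+\frac{\lambda(K)}2\int_\Omega\frac vn|\nabla n|^2+\frac a{32}\int_\Omega\frac{|\nabla v|^4}{v^3}+\frac12\int_\Omega|\nabla u|^2\le C\qquad(t<\min\{T,T_{max}\}),
\]
with $\varepsilon$ and $C$ independent of $T$. An ODE comparison gives $\mathcal{F}_2(t)\le\max\{\mathcal{F}_2(0),C/\varepsilon\}$, and since $\mathcal{F}_2$ is the sum of $\int_\Omega n\ln n$ (which is $\ge-e^{-1}|\Omega|$) and the two nonnegative quantities $a\int_\Omega\frac{|\nabla v|^2}{v}$, $b\int_\Omega|u|^2$, this produces the three bounds in \eqref{4.4}; integrating the last display over $((t-\tau)_+,t)$ and using $\tau\le1$ together with the uniform bound and the lower bound $\mathcal{F}_2\ge-e^{-1}|\Omega|$ yields \eqref{4.5}.

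The step I expect to be the main obstacle is the absorption in the second paragraph. The constants $a,b,C_1$ are essentially dictated by Lemmas~\ref{lemma4.1}, \ref{lemma4.2} and the Navier--Stokes energy estimate \eqref{3.14}, and may be large, whereas $\mu$ is only positive and may be small, so a naive ``bound one term by a small multiple of another'' is hopeless; what rescues the argument is precisely that the logistic dissipation $-\mu\int_\Omega n^2\ln n$ overwhelms any fixed multiple of $\int_\Omega n^2$ and $\int_\Omega n\ln n$ once $n$ is large, the threshold $N$ absorbing those fixed constants. A secondary point is that $\mathcal{F}_2$ is only bounded below, so all comparisons must be arranged so that the additive constants stay harmless, and the final conclusion is obtained by comparison with the scalar ODE $y'+\varepsilon y=C$ rather than by treating $\mathcal{F}_2$ as a genuine energy.
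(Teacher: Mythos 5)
Your proposal is correct and follows essentially the same route as the paper: a linear combination of \eqref{4.1}, \eqref{4.2} and \eqref{3.14} with the coefficient of $\int_\Omega\frac{|\nabla v|^2}{v}$ chosen to absorb the cross term $\int_\Omega\frac nv|\nabla v|^2$, absorption of $\int_\Omega n^2$ and $\int_\Omega n\ln n$ into the logistic dissipation $-\mu\int_\Omega n^2\ln n$ (the paper phrases your threshold-$N$ step as $n^2\le\varepsilon n^2\ln n+C(\varepsilon)$ and $n\ln n\le\varepsilon n^2\ln n+C(\varepsilon)$), creation of the $\varepsilon\mathcal{F}_2$ term via Poincar\'e and the interpolation $\int_\Omega\frac{|\nabla v|^2}{v}\le\bigl(\int_\Omega\frac{|\nabla v|^4}{v^3}\bigr)^{1/2}\bigl(\int_\Omega v\bigr)^{1/2}$, and a final ODE comparison plus integration over $((t-\tau)_+,t)$. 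No gaps.
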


\begin{proof}[Proof]
 Combining \eqref{4.1} and \eqref{4.2}, we obtain
  \begin{equation}\label{4.6}
\begin{split}
&\frac d{dt} (\int_\Omega n\ln n+
\frac{\Lambda^2(K)}{2\lambda(K)}\int_\Omega  \frac{|\nabla v|^2}v)+
\frac{\lambda(K)} 2\int_\Omega\frac{v}{n}|\nabla n|^2+
\frac{\Lambda^2(K)}{32\lambda(K)}
\int_\Omega \frac{|\nabla v|^4}{v^3}
\\
\leq &
\mu \int_\Omega n(1-n)\ln n+ \frac{C_1\Lambda^2(K)}{2\lambda(K)}\{\int_\Omega n^2+1+
\int_\Omega  |\nabla u|^2\}
\end{split}
\end{equation}
for all  $t<\min\{T,T_{max}\}$.

 This further combines with \eqref{3.14} to yield
\begin{align}\label{4.7}
&\frac d{dt} \int_\Omega (n\ln n+
\frac{\Lambda^2(K)}{2\lambda(K)} \frac{|\nabla v|^2}v
+\frac{\Lambda^2(K)C_1|u|^2}{\lambda(K)}
)+\int_\Omega\frac{C_1\Lambda^2(K)|\nabla u|^2}{2\lambda(K)}+
\frac{\Lambda^2(K)}{32\lambda(K)}
 \frac{|\nabla v|^4}{v^3} + n\ln n \notag
\\
\leq &
\mu \int_\Omega n(1-n)\ln n+ \frac{C_1\Lambda^2(K)}{\lambda(K)}\left\{\int_\Omega n^2+1+
\int_\Omega n \cdot\int_\Omega n\ln\frac n{\overline n}+(\int_\Omega n)^2\right\}+\int_\Omega n\ln n \notag\\
=&
-\mu \int_\Omega n^2\ln n+(\mu+1) \int_\Omega n\ln n+
 \frac{C_1\Lambda^2(K)}{\lambda(K)}\left\{\int_\Omega n^2+1+
\int_\Omega n \cdot\int_\Omega n\ln n\right. \notag\\
&\left.
+(\int_\Omega n)^2(1+\ln |\Omega|)- (\int_\Omega n)^2\ln(\int_\Omega n )\right\}\\
\leq &C_2 \notag
\end{align}
 with some $C_2>0$, where we have used the facts that for all $\varepsilon>0$,  $n^2\leq \varepsilon n^2\ln n +C(\varepsilon)$,
 $n\ln n\leq \varepsilon n^2\ln n +C(\varepsilon)$ and $\displaystyle\min_{0<x<1}x^2\ln x=-(2e)^{-1}$.

Furthermore, by the Young inequality and  \eqref{3.16}, one can conclude that
there exist constants $C_3>0$ and $C_4>0$ such that for
$$\mathcal{F}_2(t):=   \int_\Omega (n\ln n+
\frac{\Lambda^2(K)}{2\lambda(K)} \frac{|\nabla v|^2}v
+\frac{\Lambda^2(K)C_1}{\lambda(K)}|u|^2
),
$$
we have
$$
\mathcal{F}_2'(t)+C_3 \mathcal{F}_2(t)+ \frac{\Lambda^2(K)}{64\lambda(K)}
 \int_\Omega \frac{|\nabla v|^4}{v^3}\leq C_4.
$$
Using an ODE comparison argument now gives us a constant $C_5>0$ such that
$$
\int_\Omega n(\cdot,t)|\ln n(\cdot,t)|+\int_\Omega  \frac{|\nabla v(\cdot,t)|^2}{v(\cdot,t)}+\int_\Omega |u(\cdot,t)|^2\leq C_5
\quad \hbox{for all}\,  \,t<\min\{T,T_{max}\}
$$
as well as
$$
 \int^{t}_{(t-\tau)_+} \int_\Omega  \frac{|\nabla v(\cdot,s)|^4}{v^3(\cdot,s)} \leq C_5
\quad \hbox{for all}\,  \,t\in (0,\min\{T,T_{max}\}).
$$
\end{proof}
Now drawing on the spatio-temporal estimates of $\frac{|\nabla v|^4}{v^3}$ in Lemma \ref{lemma4.3},
we take advantage of the logistic term in the first equation of \eqref{1.1} to derive a priori estimates of $n$ in the $L^p(\Omega)$-norm in a manner quite similar to
that of Lemma \ref{lemma3.6}, using Lemma \ref{lemma2.5}.

\begin{lemma}\label{lemma4.4} For any $T>0$ and $p \geq  2$, there exists  $C(p)>0$ such that
\begin{equation}\label{4.8}
\|n(\cdot,t)\|_{L^p(\Omega)} \leq C(p)
\end{equation}
for all $t\in (0, \min\{T_{max},T\})$.
\end{lemma}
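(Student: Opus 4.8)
The plan is to replay the $L^p$ testing argument of Lemma \ref{lemma3.6}; the only new point is that the global-in-time integrability of $\int_\Omega\frac{|\nabla v|^4}{v^3}$ used there is unavailable when $\mu>0$. In its place I would use the windowed bound \eqref{4.5}, the superlinear dissipation supplied by the logistic term, and the Gronwall-type Lemma \ref{lemma2.5}.

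Concretely, multiplying the first equation of \eqref{1.1} by $pn^{p-1}$ and integrating, the convective contribution drops out by $\nabla\cdot u=0$, the diffusion term yields $-p(p-1)\int_\Omega\phi(v)n^{p-2}|\nabla n|^2-p(p-1)\int_\Omega n^{p-1}\phi'(v)\nabla n\cdot\nabla v$, and the logistic term yields $p\mu\int_\Omega n^p-p\mu\int_\Omega n^{p+1}$. Using Lemma \ref{lemma2.2} (so that $\phi(v)\ge\lambda(K)v$ and $|\phi'(v)|\le\Lambda(K)$) and Young's inequality on the mixed term exactly as in \eqref{3.21}, I obtain
\[
\frac{d}{dt}\int_\Omega n^p+\frac{p(p-1)\lambda(K)}{2}\int_\Omega vn^{p-2}|\nabla n|^2+p\mu\int_\Omega n^{p+1}\le\frac{p(p-1)\Lambda^2(K)}{2\lambda(K)}\int_\Omega\frac{n^p}{v}|\nabla v|^2+p\mu\int_\Omega n^p.
\]
Then Lemma \ref{lemma2.4} with exponent $p$ and a sufficiently small $\eta=\eta(p,K)$ absorbs $\int_\Omega vn^{p-2}|\nabla n|^2$ on the right, while $\int_\Omega n\le\max\{\int_\Omega n_0,|\Omega|\}$ by \eqref{2.3b} and $v\le K$ by \eqref{2.4} render all the lower-order terms bounded, leaving
\[
\frac{d}{dt}\int_\Omega n^p+p\mu\int_\Omega n^{p+1}\le C(p)\Big(\int_\Omega n^p\Big)\int_\Omega\frac{|\nabla v|^4}{v^3}+C(p)\int_\Omega\frac{|\nabla v|^4}{v^3}+C(p)\int_\Omega n^p+C(p).
\]

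The decisive step is to cash in the dissipation $p\mu\int_\Omega n^{p+1}$: since $\int_\Omega n^p\le\varepsilon\int_\Omega n^{p+1}+C_\varepsilon$ for every $\varepsilon>0$, after fixing $\varepsilon$ small this term simultaneously absorbs the linear contribution $C(p)\int_\Omega n^p$ on the right and leaves an extra damping $My(t)$, where $y(t):=\int_\Omega n^p(\cdot,t)$ and $M>0$ is arbitrarily large. This produces $y'(t)+My(t)\le b(t)y(t)+c(t)$ with $b(t):=C(p)\int_\Omega\frac{|\nabla v|^4}{v^3}$ and $c(t)$ a constant plus a multiple of the same integral; by \eqref{4.5} with $\tau=1$ the one-unit windowed integrals $\int_{(t-1)_+}^t b$ and $\int_{(t-1)_+}^t c$ are bounded by constants $b_1,c_1$ independent of $t$ and $T$, and choosing $M:=b_1+1$ makes $\int_{(t-1)_+}^t(M-b)\ge1$ for $t\ge1$. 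Lemma \ref{lemma2.5} then gives the asserted $t$-uniform bound $\|n(\cdot,t)\|_{L^p(\Omega)}^p=y(t)\le C(p)$.

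I expect the only genuine difficulty to be bookkeeping rather than conceptual: one must check that every constant generated — in the Young steps, in Lemma \ref{lemma2.4}, and in the choice of $M$ — depends only on $p$, on $K=\|v_0\|_{L^\infty(\Omega)}$, on $\mu$, $|\Omega|$, $\|\nabla\Phi\|_{L^\infty(\Omega)}$ and the fixed initial data, and in particular not on $T$, so that the resulting bound is genuinely uniform in time. This time-uniformity is precisely what turns the statement into the boundedness input required by the later lemmas of Section 4.
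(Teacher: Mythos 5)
Your proposal is correct and follows essentially the same route as the paper: test with $n^{p-1}$, absorb the cross-diffusion via Lemma \ref{lemma2.4} at the cost of a factor $\int_\Omega\frac{|\nabla v|^4}{v^3}$, convert the logistic dissipation $\mu p\int_\Omega n^{p+1}$ into linear damping strong enough to dominate the windowed average of that factor from \eqref{4.5}, and close with Lemma \ref{lemma2.5}. The paper phrases the damping step by keeping the superlinear term $y^{\frac{p+1}{p}}$ in the ODE and then bounding $y^{\frac{p+1}{p}}\ge 2C_2(\tau)y-C_3(\tau)$ by Young, which is the same mechanism as your $\int_\Omega n^p\le\varepsilon\int_\Omega n^{p+1}+C_\varepsilon$; the only difference is bookkeeping.
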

\begin{proof}Thanks to the Young inequality, Lemma \ref{lemma2.4} and $\nabla\cdot u=0$,
we use the first equation in \eqref{1.1} and integrate by parts  to see that  there exists $C_1=p^2\Gamma^p$
with a constant $\Gamma>1$ such that
\begin{align}\label{4.9}
&\frac d{dt}\int_\Omega n^p\notag\\
=& p\int_\Omega n^{p-1}\{ \triangle(n\phi(v))-u\cdot \nabla n+\mu n(1-n)\}\\
=& -p(p-1) \int_\Omega (\phi(v)n^{p-2}|\nabla n|^2
+ n^{p-1} \phi'(v)\nabla n\cdot \nabla v )+ \mu p \int_\Omega n^p- \mu p \int_\Omega n^{p+1}  \notag\\
\leq & -\frac {p(p-1)\lambda(K)}2 \int_\Omega vn^{p-2}|\nabla n|^2+\frac {p(p-1)\Lambda^2(K)}{2\lambda(K)}
 \int_\Omega \frac{n^p}v |\nabla v|^2+ \mu p \int_\Omega n^p- \mu p \int_\Omega n^{p+1}    \notag\\
 \leq &C_1
 \int_\Omega n v+  C_1 \left\{\int_\Omega n^p+\left(\int_\Omega  n\right)^{2p-1} \right\}
 \cdot  \int_\Omega \frac{|\nabla v|^4}{v^3}+ \mu p \int_\Omega n^p- \mu p \int_\Omega n^{p+1}   \notag\\
 \leq &C_1 \int_\Omega n v+ 2^p\mu p|\Omega|+ C_1 \left(\int_\Omega  n\right)^{2p-1}  \cdot  \int_\Omega \frac{|\nabla v|^4}{v^3}+
  C_1 \int_\Omega \frac{|\nabla v|^4}{v^3}\cdot\int_\Omega n^p-\frac{\mu p}  {2|\Omega|^{\frac1 p}}\cdot\left\{\int_\Omega n^p\right\}^{\frac{p+1}p}   \notag
\end{align}
for all $t\in (0, \min\{T_{max},T\})$.

Now let $y(t):=\int_\Omega n^p(\cdot,t)$ and
$$
g(t):=C_1 \int_\Omega n v+ 2^p\mu p|\Omega|+ C_1 \left(\int_\Omega  n\right)^{2p-1}  \cdot  \int_\Omega \frac{|\nabla v|^4}{v^3},
\quad  h(t):=C_1 \int_\Omega \frac{|\nabla v|^4}{v^3}.
$$
Then we have
\begin{equation}\label{4.10}
y'(t)+ \frac{\mu p}  {2|\Omega|^{\frac1 p}} y^{\frac{p+1}p}(t)\leq g(t)+h(t)y(t)
\end{equation}
for all $t\in (0, \min\{T_{max},T\})$.

In view of \eqref{4.5}, one can pick
$C_2(\tau)>0$ such that
$\frac1 {\tau}\int^t_{(t-\tau)_+}h(s)ds \leq C_2(\tau)$ for each $\tau\leq 1$. Hence,
  thanks to $\frac{p+1}p>1$,  an application of Young's inequality implies that
  $$
 \frac{\mu p}  {2|\Omega|^{\frac1 p}} y^{\frac{p+1}p}(t)\geq 2C_2(\tau) y(t)-C_3(\tau)
  $$
 with some $C_3(\tau)>0$, and thereby \eqref{4.10} becomes
 \begin{equation}\label{4.11}
y'(t)+  2C_2(\tau) y(t)\leq h(t)y(t) +g(t)+C_3(\tau).
\end{equation}

 Since $\int_0^\infty \int_\Omega n v\leq  \int_\Omega v_0$,  \eqref{2.4} and
$ \int^t_{(t-\tau)_+}\int_\Omega \frac{|\nabla v|^4}{v^3}\leq C_4$ by \eqref{4.5}, we can see that
 $$\int^t_{(t-\tau)_+}h(s)ds\leq  \tau C_2(\tau)$$ as well as
 \begin{equation}\label{4.12}
 \begin{split}
 \int^t_{(t-\tau)_+}g(s)ds\leq & C_1 \int_\Omega v_0+ 2^p\mu p|\Omega|\tau+ C_1 \left(\int_\Omega  n\right)^{2p-1}  \cdot  \int^t_{(t-\tau)_+}\int_\Omega \frac{|\nabla v|^4}{v^3}\\
\leq & C_5:=C_1 \int_\Omega v_0+ 2^p\mu p|\Omega|\tau+C_1 C_4 \left(
\max\{\int_\Omega  n_0,|\Omega|\} \right)^{2p-1},
\end{split}
\end{equation}
  and an application of Lemma \ref{lemma2.5} to $b_1= \tau C_2(\tau)$, $\varrho=\tau C_2(\tau)$ and
  $c_1=\tau C_3(\tau)+ C_5$  readily establishes \eqref{4.8} upon an appropriate
choice of the constant $C(p)$.
\end{proof}

At this point we are ready to ascertain the global existence of classical solutions to \eqref{1.1} \eqref{1.7} \eqref{1.8} in the case $\mu>0$.

\begin{lemma}\label{lemma4.5}
The problem \eqref{1.1} \eqref{1.7} \eqref{1.8} with  $\mu>0$ admits a global classical solution.
\end{lemma}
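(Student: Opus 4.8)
The plan is to establish global existence by contradiction, following the same extensibility strategy used in the proof of Theorem 1.1(i), but now drawing on the $\mu>0$ estimates of Section 4. Suppose $T_{max}<\infty$. The goal is to contradict \eqref{2.2} by bounding $\|n(\cdot,t)\|_{L^\infty(\Omega)}$, $\|v(\cdot,t)\|_{W^{1,q}(\Omega)}$, and $\|A^\beta u(\cdot,t)\|_{L^2(\Omega)}$ uniformly on $(0,T_{max})$. First I would apply Lemma \ref{lemma4.4} with the choice $T:=T_{max}$: since $T_{max}<\infty$, this immediately yields a constant $C(p)>0$ (depending on $T_{max}$) such that $\|n(\cdot,t)\|_{L^p(\Omega)}\leq C(p)$ for all $t\in(0,T_{max})$ and all $p\geq 2$. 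In particular, fixing some $p>2$, we obtain the $L^p$-bound on $n$ that serves as the hypothesis of Lemma \ref{lemma3.7}.

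Next, with $\|n(\cdot,t)\|_{L^p(\Omega)}$ bounded for some $p>2$, Lemma \ref{lemma3.7} applies verbatim (its proof does not use $\mu=0$ anywhere) and furnishes constants so that $\|A^\beta u(\cdot,t)\|_{L^2(\Omega)}\leq C_2$ and $\sup_{t\in(0,T_{max})}\|u(\cdot,t)\|_{L^\infty(\Omega)}\leq C_2$. Then Lemma \ref{lemma3.8} (again $\mu$-independent, relying only on the $L^p$ bound for $n$, the $L^\infty$ bound for $u$, and Neumann heat semigroup estimates applied to the second equation of \eqref{1.1}) gives $\|v(\cdot,t)\|_{W^{1,\infty}(\Omega)}\leq C$ on $(0,T_{max})$. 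To control $n$ in $L^\infty$, I would reproduce the argument of Lemmas \ref{lemma3.9} and \ref{lemma3.10}: the pointwise lower bound $v(x,t)\geq c>0$ follows from the variation-of-constants representation for $z=-\ln(v/\|v_0\|_{L^\infty})$ using \eqref{3.28} together with the now-established $L^p$-bound on $n$ and $L^\infty$-bound on $u$ (the logistic term in the $n$-equation causes no difficulty here, as $z$ still satisfies $z_t-\Delta z\leq n+\tfrac12|u|^2$). This lower bound, combined with $\|v\|_{L^\infty}\leq K$, excludes the degeneracy of the diffusion over $(0,T_{max})$ via Lemma \ref{lemma2.2}, so that $\phi(v)$ has positive upper and lower bounds and $|\phi'(v)|$ is bounded; a Moser-type iteration (as in \cite[Proposition 1.3]{TaoJDE}) then yields $\|n(\cdot,t)\|_{L^\infty(\Omega)}\leq C$ on $(0,T_{max})$.

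Assembling these bounds, $\sup_{t\in(0,T_{max})}\{\|n(\cdot,t)\|_{L^\infty(\Omega)}+\|v(\cdot,t)\|_{W^{1,q}(\Omega)}+\|A^\beta u(\cdot,t)\|_{L^2(\Omega)}\}<\infty$, which contradicts \eqref{2.2}. Hence $T_{max}=\infty$, completing the proof. I do not anticipate a genuine obstacle here, since all the necessary a priori estimates on $(0,\min\{T,T_{max}\})$ have already been developed in Lemmas \ref{lemma4.1}--\ref{lemma4.4} and the bootstrapping Lemmas \ref{lemma3.7}--\ref{lemma3.10} carry over with only cosmetic changes; the only point requiring mild care is that the constants produced by Lemma \ref{lemma4.4} (and hence downstream) depend on the a priori finite $T_{max}$, which is perfectly acceptable for an extensibility argument but is precisely why this lemma only asserts \emph{global existence} and not the uniform-in-time boundedness \eqref{1.12}; the latter is taken up separately in the subsequent lemmas of Section 4 under the additional smallness hypothesis on $v_0$.
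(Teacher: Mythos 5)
Your proposal is correct and follows essentially the same route as the paper: assume $T_{max}<\infty$, invoke Lemma \ref{lemma4.4} with $T=T_{max}$ to obtain the $L^p$ bound on $n$, then feed this into Lemmas \ref{lemma3.7}, \ref{lemma3.8}, \ref{lemma3.9} and \ref{lemma3.10} to contradict the extensibility criterion \eqref{2.2}. Your extra remark that the logistic term leaves the differential inequality for $z=-\ln(v/\|v_0\|_{L^\infty})$ unchanged, so Lemmas \ref{lemma3.9}--\ref{lemma3.10} carry over, is a point the paper glosses over but is handled correctly.
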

\begin{proof}
We apply a reasoning analogous to that in the proof of part (i) of Theorem 1.1. Assuming that
 $T_{max}<\infty$, and applying Lemma \ref{lemma4.4} with  $T=T_{max}$, we have
\begin{equation}\label{4.13}
\|n(\cdot,t)\|_{L^p(\Omega)} \leq C(p)
\end{equation}
with some $C(p)>0$
for all $t\in (0, T_{max})$. Furthermore, it follows from Lemmas \ref{lemma3.10}, \ref{lemma3.8} and \ref{lemma3.7} that there exists $C>0$ such that
\begin{equation*}
		\sup\limits_{t\in (0, T_{max})}\left\{ \|n(\cdot,t)\|_{L^{\infty}(\Omega)}\!
+\!\|v(\cdot,t)\|_{W^{1,q}(\Omega)}+\|A^\beta u(\cdot,t)\|_{L^2(\Omega)}
\right\}\leq C,
	\end{equation*}
which leads to a contradiction to \eqref{2.2} and thus the conclusion that $T_{max}=\infty$. Moreover, from the proof of Lemmas \ref{lemma3.8} and \ref{lemma3.7}, one can conclude that  this global solution is bounded in the sense that
 \begin{equation*}
	\sup_{t>0}\left\{\!\|v(\cdot,t)\|_{W^{1,\infty}(\Omega)}+\|A^\beta u(\cdot,t)\|_{L^2(\Omega)}\right\}< \infty,
\end{equation*}
and thus complete the proof.
\end{proof}

Towards the derivation of the large time behavior  of the solution,  
  we shall  consider the evolution of the quantity
 $\int_\Omega \frac{|\nabla v(\cdot,t)|^4}{v^3(\cdot,t)}$.

\begin{lemma}\label{lemma4.6}
	Let $(n,v,u,P)$ be the global classical solution of the problem \eqref{1.1} \eqref{1.7} \eqref{1.8} with $\mu>0$.
Then there exists $C>0$ such that
\begin{equation}\label{4.14}
\int_\Omega \frac{|\nabla v(\cdot,t)|^4}{v^3(\cdot,t)}\le C \quad\quad \hbox{for all}\,\, t>0.
\end{equation}
\end{lemma}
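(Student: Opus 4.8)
The plan is to test the equation satisfied by $\frac{|\nabla v|^2}{v}$ against a suitable power so as to produce a differential inequality for $y(t):=\int_\Omega \frac{|\nabla v|^4}{v^3}$, and then to absorb the right-hand side using the already established time-uniform bounds for $\|n(\cdot,t)\|_{L^p(\Omega)}$ (Lemma \ref{lemma4.4} with $T=\infty$), the $W^{1,\infty}$-bound of $v$ and the $D(A^\beta)$-bound of $u$ from Lemma \ref{lemma4.5}, together with the spatio-temporal estimate $\int^{t}_{(t-1)_+}\int_\Omega \frac{|\nabla v|^4}{v^3}\le C$ of Lemma \ref{lemma4.3}. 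Concretely, starting from the second equation in \eqref{1.1} and writing $w:=\sqrt{v}$, one computes $\frac{d}{dt}\int_\Omega \frac{|\nabla v|^4}{v^3}$ (equivalently, up to constants, $\frac{d}{dt}\int_\Omega |\nabla w|^4$) and, after integration by parts and use of the pointwise lower bound for $v$ over each finite interval — or better, the uniform lower bound coming from $\|v(\cdot,t)\|_{W^{1,\infty}}\le C$ combined with the decay machinery — obtains a dissipation term controlling $\int_\Omega |\nabla v|^2 |D^2\ln v|^2$ (or $\int_\Omega |D^2 w|^2$) plus lower-order terms involving $n$, $\nabla n$ and $\nabla u$.

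The key steps, in order, would be: (1) derive the evolution identity for $\int_\Omega \frac{|\nabla v|^4}{v^3}$, isolating a good (sign-definite, singularly weighted) dissipation term and grouping the remaining terms; (2) estimate the terms containing $n$ and $\nabla v$ using the Gagliardo--Nirenberg inequality and Lemma \ref{lemma2.4}-type bounds together with the uniform $L^p$-bounds on $n$, so that they are controlled by an $\varepsilon$-fraction of the dissipation plus a constant and a term like $C\big(\int_\Omega \frac{|\nabla v|^4}{v^3}\big)^{1+\sigma}$ with $\sigma>0$ (arising from the cubic/quartic nonlinearity in $\nabla v$); (3) handle the fluid terms $\int_\Omega v|\nabla u|^2$ and boundary contributions using the $L^\infty$-bound for $u$ (and $\nabla u$ bounds on time averages) from Lemma \ref{lemma3.7} and Lemma \ref{lemma4.3}, again absorbing into the dissipation; (4) conclude that $y(t)=\int_\Omega \frac{|\nabla v(\cdot,t)|^4}{v^3(\cdot,t)}$ satisfies a differential inequality of the form $y'(t)+a\,y(t)\le h(t)y(t)+g(t)$ — or a super-linearly damped version $y'(t)+a\,y^{1+\sigma}(t)\le h(t)y(t)+g(t)$ — with $\int^{t}_{(t-1)_+}h\le C$ and $\int^{t}_{(t-1)_+}g\le C$, whereupon Lemma \ref{lemma2.5} (exactly as applied in Lemma \ref{lemma4.4}) yields the claimed time-uniform bound \eqref{4.14}.

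The main obstacle I expect is step (2)--(3): the quartic-in-$\nabla v$ structure of the dissipated quantity means that the natural estimates produce a term that is \emph{superlinear} in $y(t)$ itself (through integrals such as $\int_\Omega \frac{|\nabla v|^6}{v^5}$ or $\int_\Omega \frac{|\nabla v|^4}{v^3}|\nabla u|$), so one cannot directly close a linear Grönwall-type inequality; instead one must either exploit a genuine super-linear absorption term on the left (coming from the good Hessian dissipation via an inequality of the type $\int_\Omega \frac{|\nabla v|^6}{v^5}\le C\big(\int_\Omega \frac{|\nabla v|^4}{v^3}\big)\cdot \int_\Omega |\nabla v|^2|D^2\ln v|^2 + \dots$, applied with the $L^p$-bounds on $n$ to control the remainder), or feed the time-averaged bound of Lemma \ref{lemma4.3} into a variant of Lemma \ref{lemma2.5} that tolerates super-linear damping. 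Making the bookkeeping of the weighted Sobolev/Gagliardo--Nirenberg inequalities precise — in particular tracking how the constants depend on $\|v_0\|_{L^\infty(\Omega)}$ and the already-fixed uniform bounds — is the delicate part, but none of it introduces a new structural difficulty beyond what is already present in Lemmas \ref{lemma3.2}, \ref{lemma4.2} and \ref{lemma4.4}.
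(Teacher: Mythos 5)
Your plan follows the paper's proof in all essentials: one tracks the evolution of $\int_\Omega v^{-3}|\nabla v|^4$, uses the Hessian-type dissipation identities (the paper quotes \cite[Lemma 2.3]{WinklerJDE2024} and \cite[Lemma 3.5]{WinklerDCDSB(2022)}) to place $C\int_\Omega v^{-3}|\nabla v|^2|D^2v|^2+C\int_\Omega v^{-5}|\nabla v|^6$ on the left, absorbs the transport terms by Young's inequality using only $\|u\|_{L^\infty}\le C$ (obtained from Lemmas \ref{lemma4.4} and \ref{lemma3.7}), and bounds the remaining source by $C\int_\Omega(n^3+1)v\le C'$ via the uniform $L^3$-bound of $n$ and \eqref{2.4}. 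The obstacle you anticipate resolves itself exactly along the first route you name: the sextic term $\int_\Omega v^{-5}|\nabla v|^6$ sits on the \emph{left} as dissipation, and since $v^{-3}|\nabla v|^4\le\varepsilon v^{-5}|\nabla v|^6+C_\varepsilon v$ pointwise, the final inequality is simply $y'(t)+y(t)\le C$ with a constant right-hand side; the paper then closes not with Lemma \ref{lemma2.5} but by using \eqref{4.5} to pick $t_0\in(t-1,t)$ with $y(t_0)\le C$ and integrating over $(t_0,t)$. One caveat: do not lean on a ``uniform lower bound for $v$'' --- none is available (the upper bound $\|v\|_{W^{1,\infty}}\le C$ gives no bound from below, and $v\to0$ under \eqref{1.13}) and none is needed, since the singular weights $v^{-3},v^{-4},v^{-5}$ are handled purely by the algebraic structure of the functional.
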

\begin{proof}
By Lemmas \ref{lemma4.4} and \ref{lemma3.7}, we have
 \begin{equation}\label{4.15}
\|A^\beta u(\cdot,t)\|_{L^2(\Omega)} \leq C_1
\end{equation}
for some $C_1>0$.

As in the proof of Lemma 2.3  in \cite{WinklerJDE2024} and by Lemma 3.5 in \cite{WinklerDCDSB(2022)},
we find $C_i>0$, $(i=2,\ldots,5)$, such that
\begin{equation*}
\begin{split}
&\frac d{dt} \int_\Omega  \frac{|\nabla v|^4}{v^3}+C_2\int_\Omega v^{-3}|\nabla v|^2 |D^2 v|^2 + C_2
\int_\Omega \frac{|\nabla v|^6}{ v^5}
\\
\leq & -4
\int_\Omega v^{-3}|\nabla v|^2 \nabla v\cdot \nabla ( u\cdot \nabla v )
+3
\int_\Omega v^{-4}|\nabla v|^4  (u\cdot \nabla v )+
  2\int_{\partial\Omega}v^{-3}|\nabla v|^2
\frac {\partial |\nabla v|^2}{\partial \nu}+C_3\int_\Omega n^3 v \\
\leq &
-4\int_\Omega v^{-3}|\nabla v|^2[ ( \nabla v \otimes u):D^2 v+(\nabla v\otimes \nabla v):\nabla u]
+3\int_\Omega v^{-4}|\nabla v|^4  (u\cdot \nabla v )\\
& +\frac {C_2}2\int_\Omega v^{-3}|\nabla v|^2 |D^2 v|^2
+\frac {C_2}2
\int_\Omega \frac{|\nabla v|^6}{ v^5}+C_4\int_\Omega (n^3+1)v \\
\leq &
4C_1\int_\Omega
\left(
v^{-3}|\nabla v|^3|D^2 v|+
v^{-3}|\nabla v|^4+ v^{-4}|\nabla v|^5\right) +\frac {C_2}2\int_\Omega v^{-3}|\nabla v|^2 |D^2 v|^2
\\
&+\frac {C_2}2
\int_\Omega \frac{|\nabla v|^6}{ v^5}+C_4\int_\Omega (n^3+1)v \\
\leq &
\frac {3C_2}4\int_\Omega v^{-3}|\nabla v|^2 |D^2 v|^2
+\frac {3C_2}4
\int_\Omega \frac{|\nabla v|^6}{ v^5}+C_5\int_\Omega (n^3+1)v,
\end{split}
\end{equation*}
and thus
\begin{equation}\label{4.16}
\frac d{dt} \int_\Omega  \frac{|\nabla v|^4}{v^3}+\frac {3C_2}4
\int_\Omega \frac{|\nabla v|^6}{ v^5}
\leq C_5\int_\Omega (n^3+1)v.
\end{equation}
Furthermore, by the inequality
$$
 \int_\Omega\frac{|\nabla v|^4}{ v^3}\leq \frac {3C_2}4
\int_\Omega \frac{|\nabla v|^6}{ v^5}+C_6 \int_\Omega   v,
$$
we get
\begin{equation}\label{4.17}
\frac d{dt} \int_\Omega  \frac{|\nabla v|^4}{v^3}+
\int_\Omega \frac{|\nabla v|^4}{ v^3}
\leq (C_5+C_6)\int_\Omega (n^3+1)v.
\end{equation}
 Now, for any given $t\geq 1$, it follows from \eqref{4.5} that
\begin{equation*}
\begin{split}
 \int^{t}_{t-1} \int_\Omega  \frac{|\nabla v|^4}{v^3} \leq C_7,
\end{split}
\end{equation*}
and thus there exists $t_0\in(t-1,t)$ such that $\int_\Omega  \frac{|\nabla v(\cdot,t_0)|^4}{v^3(\cdot,t_0)} \leq C_7$.
Hence from \eqref{4.17}, we have
\begin{equation}\label{4.18}
\int_\Omega\frac{|\nabla v(\cdot,t)|^4}{ v^3(\cdot,t)}\leq
\int_\Omega\frac{|\nabla v(\cdot,t_0)|^4}{ v^3(\cdot,t_0)}  +   (C_5+C_6)\int^{t}_{t_0}  \int_\Omega (n^3+1)v.
\end{equation}
Finally, with the help of \eqref{4.8} and \eqref{2.4}, we arrive at \eqref{4.14}. On the other hand, it follows from
 \eqref{4.17} that for all $t< 1$,
\begin{equation*}
\int_\Omega\frac{|\nabla v(\cdot,t)|^4}{ v^3(\cdot,t)}\leq
\int_\Omega\frac{|\nabla v_0(\cdot)|^4}{ v_0^3(\cdot)}  +   (C_5+C_6)\int^{1}_{0}  \int_\Omega (n^3+1)v,
\end{equation*}
and  \eqref{4.14} is hence established readily.
\end{proof}

Now we shall establish the lower bound of
the time average value of $\|n(\cdot,t)\|_{L^1(\Omega)}$ at large time scale with the help of the boundedness of $\int_\Omega \frac{|\nabla v(\cdot,t)|^4}{v^3(\cdot,t)}$ at least when $\|v_0\|_{L^1(\Omega)}$ is sufficiently small.
\begin{lemma}\label{lemma4.7}
Let $(n,v,u,P)$ be the global classical solution of \eqref{1.1} \eqref{1.7} \eqref{1.8} with $\mu>0$.
Then for any given $K>0$, one can pick $\delta=\delta(\mu,K)>0$  such that whenever  $\|v_{0}\|_{L^{\infty}(\Omega)}\leq K$ and $\|v_0\|_{L^1(\Omega)}<\delta$, we have
\begin{equation}\label{4.19}
\displaystyle\lim _{ \overline{T\rightarrow \infty}}\frac1T\int^T_1\|n(\cdot,t)\|_{L^1(\Omega)}\geq \frac { |\Omega|} 3.
\end{equation}
\end{lemma}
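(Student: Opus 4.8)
The plan is to track the evolution of $\int_\Omega v$ in combination with a lower bound for $\int_\Omega n$ coming from the quasi-dissipativity of the $n$-equation under the logistic source. Observe first that testing the second equation in \eqref{1.1} against $1$ and using $\nabla\cdot u=0$ yields $\frac{d}{dt}\int_\Omega v=-\int_\Omega nv\le 0$, so $t\mapsto\|v(\cdot,t)\|_{L^1(\Omega)}$ is nonincreasing; in particular $\int_\Omega v(\cdot,t)\le\|v_0\|_{L^1(\Omega)}<\delta$ for all $t>0$. Integrating this identity over $(1,T)$ gives $\int_1^T\int_\Omega nv=\int_\Omega v(\cdot,1)-\int_\Omega v(\cdot,T)\le\delta$, so the spatio-temporal weighted mass $\int_1^T\int_\Omega nv$ is globally small. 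I would then seek to convert this into control of $\int_1^T\int_\Omega n$ itself: since $\phi(v)\ge\lambda(K)v$ by Lemma \ref{lemma2.2}, the quantity $nv$ is comparable to $n\phi(v)/\Lambda(K)$, and the crucial point is that on the (space-time) set where $v$ is bounded below, $nv$ controls $n$, while on the set where $v$ is small the contribution to $\int n$ must be estimated differently.

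The main device will be the integrated form of the first equation. Testing $n_t+u\cdot\nabla n=\Delta(n\phi(v))+\mu n(1-n)$ against $1$ gives $\frac{d}{dt}\int_\Omega n=\mu\int_\Omega n-\mu\int_\Omega n^2$, hence by the Cauchy–Schwarz inequality $\int_\Omega n^2\ge|\Omega|^{-1}(\int_\Omega n)^2$,
\begin{equation*}
\frac{d}{dt}\int_\Omega n\ge\mu\int_\Omega n-\frac{\mu}{|\Omega|}\Bigl(\int_\Omega n\Bigr)^2 .
\end{equation*}
This is a logistic ODE for $m(t):=\int_\Omega n(\cdot,t)$ of the form $m'\ge\mu m(1-m/|\Omega|)$, whose solutions starting from $m(1)>0$ (which holds since $n>0$ by Lemma \ref{lemma2.1}) satisfy $m(t)\to|\Omega|$ as $t\to\infty$; more importantly, a differential-inequality comparison shows $\liminf_{t\to\infty}m(t)\ge|\Omega|$ \emph{unless} $m$ is pushed down, which it is not by this inequality alone. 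However, the genuine obstruction is that the lower estimate $\int_\Omega n^2\ge|\Omega|^{-1}m^2$ is too lossy to close, so instead I would argue by contradiction on the time-averaged quantity: suppose $\liminf_{\overline{T\to\infty}}\frac1T\int_1^T m(t)\,dt<|\Omega|/3$. Then along a sequence $T_j\to\infty$ the time average of $m$ stays below $|\Omega|/3$; integrating the logistic differential inequality over $(1,T_j)$, using $\frac1{T_j}(m(T_j)-m(1))\to0$ and the boundedness of $m$ (from \eqref{2.3b}), yields
\begin{equation*}
0=\lim_{j\to\infty}\frac{m(T_j)-m(1)}{T_j}\ge\mu\lim_{j\to\infty}\frac1{T_j}\int_1^{T_j}\Bigl(m-\frac{m^2}{|\Omega|}\Bigr),
\end{equation*}
so $\limsup_j\frac1{T_j}\int_1^{T_j}m^2\,dt\ge|\Omega|\limsup_j\frac1{T_j}\int_1^{T_j}m\,dt$, which combined with Jensen forces the time average of $m$ to concentrate near $0$ or near $|\Omega|$; here the smallness of $\|v_0\|_{L^1(\Omega)}$ enters to rule out the spurious possibility that $m$ spends appreciable time near $0$.

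The mechanism that excludes $m$ staying small is that $v$ must then \emph{persist}: if $\int_\Omega n$ were often small, the decay $\frac{d}{dt}\int_\Omega v=-\int_\Omega nv$ would be weak, but actually we need the reverse — that $v$ becomes small forces $n$ to be detectable. I would make this precise via Lemma \ref{lemma2.4} (with $p=1$) and the uniform bound \eqref{4.14} on $\int_\Omega|\nabla v|^4/v^3$: these give $\int_\Omega\frac{n}{v}|\nabla v|^2\le\eta\int_\Omega\frac{v}{n}|\nabla n|^2+\eta\int_\Omega nv+C(1+m)\int_\Omega\frac{|\nabla v|^4}{v^3}$, controlling the cross-diffusive term uniformly; feeding this together with Lemma \ref{lemma4.1} into a time integration produces $\int_1^T\int_\Omega\frac{v}{n}|\nabla n|^2\le C(1+\int_1^T\mu\int_\Omega n\ln n(1-n)+T)$, i.e. the entropy-dissipation terms grow at most linearly in $T$. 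From the resulting uniform-in-$T$ control of the time-averaged dissipation, a Poincaré–Wirtinger argument on $\sqrt n$ gives that $\frac1T\int_1^T\|n(\cdot,t)-\overline{n}(t)\|^2_{L^2}\,dt$ is small relative to $T$, so $n$ is close to its spatial mean $m(t)/|\Omega|$ in an averaged sense; inserting this into $\frac{d}{dt}\int_\Omega v=-\int_\Omega nv\approx -\frac{m(t)}{|\Omega|}\int_\Omega v$ and using that $\int_\Omega v$ stays above a positive multiple of $\int_\Omega v_0\cdot e^{-\frac1{|\Omega|}\int_1^t m}$ — no, rather that $\int_\Omega v$ is positive for all finite $t$ — forces $\frac1T\int_1^T m(t)\,dt$ to be bounded below; the threshold $|\Omega|/3$ (rather than $|\Omega|$) absorbs the averaged $L^2$-defect of $n$ and the smallness parameter $\delta$.

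\emph{Main obstacle.} The delicate step is the last one: converting the smallness of $\int_1^T\int_\Omega nv$ and of the time-averaged oscillation of $n$ into a genuine \emph{lower} bound for the time average of $\int_\Omega n$, because the degeneracy $\phi(0)=0$ prevents a pointwise-in-time lower bound for $n$ and one must work entirely at the level of time averages; reconciling the logistic production $\mu\int n(1-n)$ (which wants $m\approx|\Omega|$) with the possibility of $m$ dipping low on a sparse-in-time set, while keeping the cross-diffusion term controlled via \eqref{4.14} and Lemma \ref{lemma2.4}, is where the smallness hypothesis $\|v_0\|_{L^1(\Omega)}<\delta(\mu,K)$ does the essential work.
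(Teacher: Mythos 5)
There is a genuine error at the core of your argument. From $\frac{d}{dt}\int_\Omega n=\mu\int_\Omega n-\mu\int_\Omega n^2$ and the Cauchy--Schwarz bound $\int_\Omega n^2\ge|\Omega|^{-1}\bigl(\int_\Omega n\bigr)^2$ you conclude $m'\ge\mu m(1-m/|\Omega|)$ for $m(t)=\int_\Omega n$; but the inequality $\int_\Omega n^2\ge|\Omega|^{-1}m^2$ gives $-\mu\int_\Omega n^2\le-\mu m^2/|\Omega|$, i.e.\ the \emph{upper} bound $m'\le\mu m(1-m/|\Omega|)$ (this is exactly how \eqref{2.3b} is obtained). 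There is no pointwise-in-time lower bound on $m'$ available from the mass identity alone, because $\int_\Omega n^2$ is not controlled from above by $m$: if $n$ concentrates, the death term can drive $m$ down. Your subsequent contradiction argument, the Jensen step, and the Poincar\'e--Wirtinger patch are all built on the reversed inequality, and the closing paragraph essentially concedes that the argument does not close. The auxiliary observations (monotonicity of $\|v\|_{L^1}$, $\int_1^T\int_\Omega nv\le\delta$, Lemma \ref{lemma2.4} with $p=1$) are correct but are not assembled into a lower bound for the time average of $m$.

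The missing idea is to track $\int_\Omega\ln n$ rather than $\int_\Omega n$. Testing the first equation against $n^{-1}$, the logistic term contributes \emph{exactly} $\mu\int_\Omega(1-n)=\mu|\Omega|-\mu\int_\Omega n$ with no Cauchy--Schwarz loss, the diffusion term contributes the good-signed quantity $\int_\Omega\phi(v)n^{-2}|\nabla n|^2$ after integration by parts, and the only adverse term is $-\tfrac12\int_\Omega\frac{|\phi'(v)|^2}{\phi(v)}|\nabla v|^2\ge-\frac{\Lambda^2(K)}{2\lambda(K)}\int_\Omega\frac{|\nabla v|^2}{v}$. By H\"older,
\begin{equation*}
\int_\Omega\frac{|\nabla v|^2}{v}\le\Bigl\{\int_\Omega\frac{|\nabla v|^4}{v^3}\Bigr\}^{1/2}\Bigl\{\int_\Omega v\Bigr\}^{1/2}\le C_1^{1/2}\Bigl\{\int_\Omega v_0\Bigr\}^{1/2},
\end{equation*}
using the uniform bound \eqref{4.14} of Lemma \ref{lemma4.6} together with the monotone decay of $\|v(\cdot,t)\|_{L^1(\Omega)}$ that you already noted. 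Choosing $\delta$ so that this is at most $\mu|\Omega|/2$ yields $\frac{d}{dt}\int_\Omega\ln n\ge\frac{\mu|\Omega|}{2}-\mu\int_\Omega n$, and integrating over $(1,T)$, dividing by $T$, and controlling the boundary terms via $\ln x<x$ and \eqref{2.3b} gives \eqref{4.19}. This is where the smallness of $\|v_0\|_{L^1(\Omega)}$ actually enters --- not through $\int_1^T\int_\Omega nv$, but through the pointwise-in-time smallness of $\int_\Omega\frac{|\nabla v|^2}{v}$.
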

\begin{proof}[Proof] Multiplying the first equation in \eqref{1.1} by $n^{-1}$, an application of Lemma \ref{lemma2.2}, the Young inequality and $\nabla\cdot u=0$ shows that
\begin{equation}\label{4.20}
\begin{split}
\frac d{dt}\int_\Omega \ln n
&=\int_\Omega \frac 1 n
(\Delta (n\phi(v))-u\cdot\nabla n)+ \mu|\Omega|- \mu\int_\Omega  n\\
&=\int_\Omega\frac{\nabla n}{n^2}
(\phi(v) \nabla n+\phi'(v)n \nabla v )+ \mu|\Omega|- \mu\int_\Omega  n\\
&\geq \frac12\int_\Omega \frac{\phi(v)} {n^2}|\nabla n|^2-\frac12
\int_\Omega \frac{|\phi'(v)|^2} {\phi(v)} |\nabla v|^2+ \mu|\Omega|- \mu\int_\Omega  n\\
&\geq-
\frac{\Lambda^2(K)}{2\lambda(K)}\int_\Omega \frac {|\nabla v|^2}v + \mu|\Omega|- \mu\int_\Omega  n.
\end{split}
\end{equation}
On the other hand, revisiting the proof of  \eqref{4.14}, one can conclude that there exists $C_1>0$,  which may be dependent on  $K$ but independent of $\|v_0\|_{L^1(\Omega)}$,  such that
$$\int_\Omega \frac{|\nabla v(\cdot,t)|^4}{v^3(\cdot,t)}\le C_1  \quad\quad \hbox{for all}\,\, t>0.
$$
Therefore by the H\"{o}lder inequality,  we get
\begin{equation}\label{4.21}
\begin{split}
\frac{\Lambda^2(K)}{2\lambda(K)}\int_\Omega \frac {|\nabla v|^2}v
&\leq
  \frac{\Lambda^2(K)}{2\lambda(K)}\left\{ \int_\Omega\frac {|\nabla v|^4}{v^3}\right\}^{\frac 12}
\left\{\int_\Omega  v \right\} ^{\frac 12}\\
&\leq  \frac{\Lambda^2(K)}{2\lambda(K)}C_1^{\frac 12} \left\{\int_\Omega  v_0 \right\} ^{\frac 12}.
\end{split}
\end{equation}
Pick $\delta>0$ sufficiently small such that
$$ \frac{\Lambda^2(K)}{2\lambda(K)}C_1^{\frac 12} \delta ^{\frac 12}\leq \frac{ \mu|\Omega|}2.
$$
 Then from \eqref{4.20} and \eqref{4.21} it follows that
\begin{equation}\label{4.22}
\begin{split}
\frac d{dt}\int_\Omega \ln n
\geq
\frac{ \mu|\Omega|}2- \mu\int_\Omega  n.
\end{split}
\end{equation}
For any $T>2$, integrating \eqref{4.22} over $(1,T)$, we readily obtain that
\begin{equation}\label{4.23}
\begin{split}
\frac 1T \int^{T}_1\int_\Omega n
&\geq  \frac { |\Omega|} 2-\frac 1{T\mu}\int_\Omega \ln n(\cdot,T)  +\frac 1{T\mu} \int_\Omega \ln n(\cdot,1)\\
& \geq
\frac  { |\Omega|} 2-\frac 1{T\mu}\int_\Omega n (\cdot,T) +\frac 1{T\mu} \int_\Omega \ln n(\cdot,1)\\
& \geq
\frac  { |\Omega|} 2-\frac 1{T\mu}\max\{\int_\Omega  n_0,|\Omega|\}  +\frac 1{T\mu} \int_\Omega \ln n(\cdot,1),
\end{split}
\end{equation}
where  \eqref{2.3b} and $\ln x<x$ for $x>0$ are used, and therefore get \eqref{4.19} immediately.
\end{proof}

Noting that $\|v(\cdot,t)\|_{L^1(\Omega)}$  decreases with respect to time $t>0$,
the lower bound of the time average value of $\|n(\cdot,t)\|_{L^1(\Omega)}$ expressed in \eqref{4.19} can inform us of
the decay properties of $\|v(\cdot,t)\|_{W^{1,\infty}(\Omega)}$.
\begin{lemma}\label{lemma4.8}
 Let $K>0$ and $\|v_0\|_{L^1(\Omega)}<\delta$ be given as in Lemma \ref{lemma4.7}. Then we have
 \begin{equation}\label{4.24}
\displaystyle\lim_{t\rightarrow \infty} \|v(\cdot,t)\|_{W^{1,\infty}(\Omega)} =0.
\end{equation}
\end{lemma}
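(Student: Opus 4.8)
The plan is to exploit the fact that $t\mapsto\|v(\cdot,t)\|_{L^1(\Omega)}$ is nonincreasing together with the lower bound on the time average of $\|n(\cdot,t)\|_{L^1(\Omega)}$ from Lemma \ref{lemma4.7} in order to first obtain decay of $\|v(\cdot,t)\|_{L^1(\Omega)}$, and then bootstrap this to decay in $W^{1,\infty}(\Omega)$ via smoothing of the Neumann heat semigroup. The starting observation is that integrating the second equation in \eqref{1.1} over $\Omega$ yields $\frac{d}{dt}\int_\Omega v=-\int_\Omega nv\le 0$, so $\int_\Omega v(\cdot,t)$ is nonincreasing and hence converges to some limit $\ell\ge 0$ as $t\to\infty$. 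To see $\ell=0$, I would argue by contradiction: if $\ell>0$, then since $v$ is uniformly bounded above by $K$ (by \eqref{2.4}) and $\|v(\cdot,t)\|_{L^1}\ge \ell$ for all $t$, one can use the pointwise lower bound and a Harnack-type or direct argument to get $\int_\Omega nv\ge c\int_\Omega n$ on sets where $v$ is not too small; more simply, combining $\int_1^T\int_\Omega nv\,dt=\int_\Omega v(\cdot,1)-\int_\Omega v(\cdot,T)\le\int_\Omega v_0$ (a finite, $T$-independent quantity) with $\frac1T\int_1^T\int_\Omega n\,dt\ge\frac{|\Omega|}{3}$ from \eqref{4.19} forces $v$ to become small on a large-measure set, contradicting $\int_\Omega v\ge\ell>0$ when combined with an equi-integrability/lower-bound control. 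The cleanest route is: since $\int_1^\infty\int_\Omega nv\,dt<\infty$ while $\liminf_{T\to\infty}\frac1T\int_1^T\int_\Omega n\ge\frac{|\Omega|}{3}>0$, there must be a sequence $t_k\to\infty$ along which $\int_\Omega n(\cdot,t_k)v(\cdot,t_k)\to0$ yet $\int_\Omega n(\cdot,t_k)\ge\frac{|\Omega|}{4}$, which (using $v\le K$ and some uniform integrability of $n$ coming from Lemma \ref{lemma4.4}) forces $\|v(\cdot,t_k)\|_{L^1}\to0$; monotonicity of $\int_\Omega v$ then upgrades this to $\|v(\cdot,t)\|_{L^1(\Omega)}\to0$ as $t\to\infty$.

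Once $\|v(\cdot,t)\|_{L^1(\Omega)}\to 0$ is in hand, I would interpolate against the uniform bound $\|v(\cdot,t)\|_{L^\infty(\Omega)}\le K$ from \eqref{2.4} to conclude $\|v(\cdot,t)\|_{L^p(\Omega)}\to 0$ for every $p\in[1,\infty)$. Then, writing the second equation as $v_t=\Delta v-(u\cdot\nabla v)-nv$ and applying the variation-of-constants formula on, say, the time interval $[t-1,t]$,
\begin{equation*}
v(\cdot,t)=e^{\Delta}v(\cdot,t-1)-\int_{t-1}^{t}e^{(t-s)\Delta}\nabla\cdot\bigl(v(\cdot,s)u(\cdot,s)\bigr)\,ds-\int_{t-1}^{t}e^{(t-s)\Delta}\bigl(n(\cdot,s)v(\cdot,s)\bigr)\,ds,
\end{equation*}
where I used $\nabla\cdot u=0$ to rewrite the convective term in divergence form. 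Using the standard $L^p$--$L^q$ smoothing estimates for the Neumann heat semigroup, the uniform bounds $\|u(\cdot,t)\|_{L^\infty(\Omega)}\le C$ (Lemma \ref{lemma3.7}), $\|n(\cdot,t)\|_{L^p(\Omega)}\le C(p)$ (Lemma \ref{lemma4.4}), and $\|v(\cdot,t)\|_{L^\infty(\Omega)}\le K$, together with the already-established $\|v(\cdot,s)\|_{L^p(\Omega)}\to 0$, each of the three terms can be made small in $L^\infty(\Omega)$ for $t$ large: the first term decays because $e^{\Delta}$ maps $L^p$ to $L^\infty$ and $\|v(\cdot,t-1)\|_{L^p}\to0$; the second and third, being time integrals over a unit interval of semigroup actions applied to products that are small in a suitable $L^p$ sense (e.g.\ $\|v u\|_{L^p}\le\|u\|_{L^\infty}\|v\|_{L^p}\to0$ and $\|nv\|_{L^p}\le\|n\|_{L^{2p}}\|v\|_{L^{2p}}\to0$), are small provided the kernel singularities $(t-s)^{-1/2-1/p}$ and $(t-s)^{-1/p}$ are integrable near $s=t$, which holds for $p$ chosen large enough. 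This yields $\|v(\cdot,t)\|_{L^\infty(\Omega)}\to 0$, and feeding this back (or running the same Duhamel argument one derivative higher, using $\|e^{\tau\Delta}\nabla\cdot(\cdot)\|_{L^\infty}$-type estimates and the gradient bound $\|\nabla v(\cdot,t)\|_{L^\infty(\Omega)}\le C$ from Lemma \ref{lemma3.8}/the $\mu>0$ analogue) gives $\|\nabla v(\cdot,t)\|_{L^\infty(\Omega)}\to 0$ as well, establishing \eqref{4.24}.

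The main obstacle I anticipate is the first step: rigorously extracting $\|v(\cdot,t)\|_{L^1(\Omega)}\to0$ from the averaged lower bound \eqref{4.19} on $\|n\|_{L^1}$ together with the finiteness of $\int_1^\infty\int_\Omega nv$. The subtlety is that $\int_\Omega nv$ being small does not by itself force $\int_\Omega v$ to be small — $n$ could concentrate exactly where $v$ is small. Controlling this requires using that $n$ cannot concentrate too badly (its $L^p$ norms are uniformly bounded, so it is uniformly integrable) together with the pointwise upper bound on $v$; then on the large-measure set where $n(\cdot,t_k)$ carries most of its mass $\tfrac{|\Omega|}{4}$, the smallness of $\int_\Omega n(\cdot,t_k)v(\cdot,t_k)$ forces $v(\cdot,t_k)$ to be small on a set of substantial measure, and the $L^\infty$ bound on $v$ closes the estimate for $\|v(\cdot,t_k)\|_{L^1}$. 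Making this quantitative — i.e.\ choosing the thresholds so that the measure bookkeeping works out — is the delicate part; once done, the monotonicity of $\int_\Omega v$ does the rest of the work in that step, and the remaining semigroup bootstrap is routine given the uniform estimates already available.
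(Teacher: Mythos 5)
Your second step---upgrading $\|v(\cdot,t)\|_{L^1(\Omega)}\to0$ to decay in $W^{1,\infty}(\Omega)$ by interpolating against the uniform bounds and then running a Duhamel/heat-semigroup estimate---is sound and is essentially what the paper does. The problem is the first step, and it is exactly the point you flag as ``delicate'': your mechanism for extracting $\|v(\cdot,t_k)\|_{L^1(\Omega)}\to0$ does not work. From $\int_\Omega n(\cdot,t_k)v(\cdot,t_k)\to0$, $\int_\Omega n(\cdot,t_k)\ge\frac{|\Omega|}{4}$ and a uniform $L^2$ bound on $n$, one can only conclude that the sublevel set $\{v(\cdot,t_k)<\eta\}$ has measure bounded below by some fixed $c_0>0$; this still leaves room for $\int_\Omega v(\cdot,t_k)$ to be as large as roughly $K(|\Omega|-c_0)$, nowhere near zero. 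Uniform integrability of $n$ does not repair this, because $v$ is free to remain of order $K$ on the complement of that set, and the uniform gradient information available (e.g.\ $\|\nabla v(\cdot,t)\|_{L^4(\Omega)}\le C$ from Lemma \ref{lemma4.6}) controls the spatial oscillation of $v$ only by a constant, not by a small quantity. So the contradiction you aim for never materializes.

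The missing ingredient is a quantitative statement that $v$ is, in a time-integrated sense, nearly spatially constant. The paper obtains it by testing the second equation with $v$: the identity $\frac d{dt}\int_\Omega v^2+\int_\Omega|\nabla v|^2=-2\int_\Omega nv^2$ yields $\int_0^\infty\int_\Omega|\nabla v|^2\le\int_\Omega v_0^2$, whence by the Poincar\'e and Cauchy--Schwarz inequalities $\int_1^T\|v-\overline v\|_{L^2(\Omega)}\le C\,T^{1/2}$. Writing $\int_\Omega nv=|\Omega|\,\overline n\,\overline v+\int_\Omega n(v-\overline v)$ and using $\int_0^\infty\int_\Omega nv\le\int_\Omega v_0$ together with $\sup_t\|n(\cdot,t)\|_{L^2(\Omega)}<\infty$, one finds $\frac1T\int_1^T|\Omega|\,\overline n\,\overline v=O(T^{-1/2})$; since $t\mapsto\overline v(t)$ is nonincreasing, this gives $\overline v(T)\cdot\frac1T\int_1^T\int_\Omega n=O(T^{-1/2})$, and the lower bound \eqref{4.19} on the time average of $\int_\Omega n$ then forces $\overline v(T)\to0$. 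You need some version of this replacement of $v$ by its spatial mean to close the first step; the measure-theoretic bookkeeping you sketch cannot do it on its own.
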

\begin{proof}[Proof]
Multiplying the second equation in \eqref{1.1} by $v$, we get
 \begin{equation*}
 \frac d{dt} \int_\Omega v^2 + \int_\Omega |\nabla v|^2 =-2\int_\Omega nv^2
\end{equation*}
and then
\begin{equation}\label{4.25}
\int^T_0 \int_\Omega |\nabla v|^2+2\int^T_0 \int_\Omega nv^2\leq \int_\Omega v_0^2
\end{equation}
for all $T>0$.
 Moreover,  by the Poincar\'{e} inequality and  H\"{o}lder's inequality, we have
 \begin{equation}\label{4.26}
\begin{split}
|\Omega|\int^T_1 \int_\Omega\overline{n}\cdot   \overline{v}=& \int^T_0 \int_\Omega nv- \int^T_0 \int_\Omega n(v- \overline{v})\\
  \leq & \int_\Omega v_0+  \int^T_0  \|n\|_{L^2(\Omega)} \|v-\overline{v}\|_{L^2(\Omega)}\\
  \leq & \int_\Omega v_0+ C_p\sup_{t>0}\|n(\cdot,t)\|_{L^2(\Omega)} \int^T_0  \|\nabla v\|_{L^2(\Omega)}\\
  \leq & \int_\Omega v_0+ C_p\sup_{t>0}\|n(\cdot,t)\|_{L^2(\Omega)} \left\{\int^T_0  \|\nabla v\|^2_{L^2(\Omega)}\right\}^{\frac12}T^{\frac12}\\
  \leq & \int_\Omega v_0+ C_p\sup_{t>0}\|n(\cdot,t)\|_{L^2(\Omega)}  \| v_0\|_{L^2(\Omega)}T^{\frac12},\\
 \end{split}
\end{equation}
which, together with the decreasing behavior of $\|v(\cdot,t)\|_{L^1(\Omega)}$, leads us to
 \begin{equation}\label{4.27}
\begin{split}
\int_\Omega v(\cdot,T)\cdot \frac1T \int^T_1 \int_\Omega n   \leq \frac{|\Omega|}T \int_\Omega v_0+  C_p |\Omega| \sup_{t>0}\|n(\cdot,t)\|_{L^2(\Omega)}  \| v_0\|_{L^2(\Omega)}T^{-\frac12}.
 \end{split}
\end{equation}
 Therefore combining \eqref{4.27} with \eqref{4.23}, we obtain that
  \begin{equation}\label{4.28}
  \begin{split}
&\left\{ \frac {|\Omega|}2-\frac 1{T\mu}\max\{\int_\Omega  n_0,|\Omega|\}  +\frac 1{T\mu} \int_\Omega \ln n_0 \right\}\int_\Omega v(\cdot,T)\\
  \leq &\frac{|\Omega|}T \int_\Omega v_0+ C_p |\Omega| \sup_{t>0}\|n(\cdot,t)\|_{L^2(\Omega)}  \| v_0\|_{L^2(\Omega)}T^{-\frac12}.
 \end{split}
\end{equation}

Now since $ \displaystyle\lim_{T\rightarrow \infty}\{\frac 1{T\mu}\max\{\int_\Omega  n_0,|\Omega|\}-\frac 1{T\mu} \int_\Omega \ln n(\cdot,1)  \}=0$, one can find $T_1>2$ such that for all $T>T_1$, we have
\begin{equation*}
  \begin{split}
 \frac {|\Omega|}4 \int_\Omega v(\cdot,T)
  \leq \frac{|\Omega|}T \int_\Omega v_0+ C_p |\Omega| \sup_{t>0}\|n(\cdot,t)\|_{L^2(\Omega)}  \| v_0\|_{L^2(\Omega)}T^{-\frac12}
 \end{split}
\end{equation*}
and thus
 \begin{equation}\label{4.29}
\displaystyle\lim_{t\rightarrow \infty} \|v(\cdot,t)\|_{L^{1}(\Omega)} =0,
\end{equation}
 which can ensure that   \eqref{4.24} holds by the interpolation inequality and  the smoothing properties of the Neumann heat semigroup. Indeed,
  thanks to the boundedness of  $\|v(\cdot,t)\|_{W^{1,\infty}(\Omega)} $ asserted in Lemma \ref{lemma4.5},  we have
  \begin{equation}\label{4.29a}
\displaystyle\lim_{t\rightarrow \infty} \|v(\cdot,t)\|_{L^{\infty}(\Omega)} =0.
\end{equation}
Since
$$\int_\Omega |\nabla v(\cdot,t)|^2=\int_\Omega \frac{|\nabla v(\cdot,t)|^2}{v(\cdot,t)}\cdot
 \|v(\cdot,t)\|_{L^{\infty}(\Omega)}
 \quad\quad \hbox{for all}\,\, t>0,
 $$
 the combination of \eqref{4.29a} and  \eqref{4.4} leads to
 $
\displaystyle\lim_{t\rightarrow \infty} \| \nabla v(\cdot,t)\|_{L^2(\Omega)} =0,
$
and thus for all $p>2$
 \begin{equation}\label{4.29b}
\displaystyle\lim_{t\rightarrow \infty} \| \nabla v(\cdot,t)\|_{L^p(\Omega)} =0
\end{equation}
 by the H\"{o}lder inequality and the boundedness of  $\|v(\cdot,t)\|_{W^{1,\infty}(\Omega)}$. Moreover by
  employing the smoothing properties of the Neumann heat semigroup (\cite[Lemma 2.1(iii)]{Cao}), we can obtain that
  \begin{align}\label{4.29c}
&\|\nabla v(\cdot,t)\|_{L^\infty(\Omega)}\notag\\
\leq &  C_1(1+(t-t_1)^{-\frac14}) e^{-\lambda_1(t-t_1)}
\|\nabla v(\cdot,t_1)\|_{L^4(\Omega)}
+\int_{t_1}^t
\|\nabla e^{(t-s)\Delta}\{n(\cdot,s)v(\cdot,s)\}\|_{L^\infty(\Omega)}\notag\\
&\quad+\int_{t_1}^t\|\nabla e^{(t-s)\Delta}
(u\cdot\nabla v)(\cdot,s)\|_{L^\infty(\Omega)}\\
\leq &
 C_1(1+(t-t_1)^{-\frac14}) e^{-\lambda_1(t-t_1)}
\|\nabla v(\cdot,t_1)\|_{L^4(\Omega)}
+C_1\int_{t_1}^t(1+(t-s)^{-\frac 34})
e^{-\lambda_1(t-s)}\|n(\cdot,s)v(\cdot,s)\|_{L^4(\Omega)}\notag\\
&\quad+C_1\int_{t_1}^t(1+(t-s)^{-\frac3 4})
e^{-\lambda_1(t-s)}\|u(\cdot,s)\|_{L^\infty(\Omega)}
\|\nabla v(\cdot,s)\|_{L^4(\Omega)}\notag\\
\leq &
C_1(1+(t-t_1)^{-\frac14}) e^{-\lambda_1(t-t_1)}
\|\nabla v(\cdot,t_1)\|_{L^4(\Omega)}
+C_2 \sup\limits_{t\geq t_1}( \|v(\cdot,t)\|_{L^8(\Omega)}+\|\nabla v(\cdot,t)\|_{L^4(\Omega)})\notag
\end{align}
  with  $C_2:=C_1\int_0^\infty(1+\sigma^{-\frac34})
e^{-\lambda_1\sigma}d\sigma\cdot \sup\limits_{t\geq t_1} (\|u(\cdot,t)\|_{L^\infty(\Omega))}+ \|n(\cdot,t)\|_{L^8(\Omega)})$, the finite of which is warranted by  Lemma \ref{lemma4.5} and  Lemma \ref{lemma4.4}.
Hence due to \eqref{4.29b} and \eqref{4.29a}, we have
 \begin{equation}\label{4.29d}
\displaystyle\lim_{t\rightarrow \infty} \| \nabla v(\cdot,t)\|_{L^\infty(\Omega)} =0,
\end{equation}
and thereby arrive at  \eqref{4.24}.
\end{proof}

At this juncture, the decay feature of $\int_\Omega \frac {|\nabla v(\cdot,t)|^2}{v(\cdot,t)} $ enables us to achieve a comprehensive understanding of the large-time behavior of the solution component $n$.

\begin{lemma}\label{lemma4.9}
Let the assumption of Lemma 4.7 hold. Then  we have
 \begin{equation}\label{4.30}
\displaystyle\lim_{ \overline{t\rightarrow \infty}} \|n(\cdot,t)-1\|_{L^2(\Omega)}= 0.
\end{equation}
\end{lemma}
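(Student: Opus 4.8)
The plan is to exploit the relative entropy of $n$ with respect to the homogeneous equilibrium $n\equiv1$, namely $E(t):=\int_\Omega\bigl(n(\cdot,t)-\ln n(\cdot,t)\bigr)$, which is legitimate to work with since $n>0$ in $\overline\Omega\times(0,\infty)$ by Lemma \ref{lemma2.1}. The first step is to test the first equation in \eqref{1.1} against $1-\frac1n$: the gain over \eqref{4.20} is that the logistic term then contributes precisely $\mu\int_\Omega(n-1)(1-n)=-\mu\int_\Omega(n-1)^2$. After integrating by parts (the boundary terms vanishing because $\frac{\partial(n\phi(v))}{\partial\nu}=0$), dropping the convective term by $\nabla\cdot u=0$, and absorbing the cross-diffusive contribution $-\int_\Omega\frac{\phi'(v)}{n}\nabla n\cdot\nabla v$ through Young's inequality together with the bounds $\phi(v)\ge\lambda(K)v$ and $\frac{|\phi'(v)|^2}{\phi(v)}\le\frac{\Lambda^2(K)}{\lambda(K)v}$ from Lemma \ref{lemma2.2}, I expect to reach
\begin{equation*}
\frac d{dt}\int_\Omega(n-\ln n)+\frac12\int_\Omega\frac{\phi(v)}{n^2}|\nabla n|^2+\mu\int_\Omega(n-1)^2\le\frac{\Lambda^2(K)}{2\lambda(K)}\int_\Omega\frac{|\nabla v|^2}{v}=:h(t)
\end{equation*}
for all $t>0$, in analogy with the computation behind \eqref{4.20}.

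Second, I would verify that $h(t)\to0$ as $t\to\infty$. By the Cauchy--Schwarz inequality, $\int_\Omega\frac{|\nabla v|^2}{v}\le\bigl(\int_\Omega\frac{|\nabla v|^4}{v^3}\bigr)^{1/2}\bigl(\int_\Omega v\bigr)^{1/2}$, where the first factor stays bounded by Lemma \ref{lemma4.6} and the second tends to $0$ since $\|v(\cdot,t)\|_{L^1(\Omega)}\le|\Omega|\,\|v(\cdot,t)\|_{W^{1,\infty}(\Omega)}\to0$ by Lemma \ref{lemma4.8}. Hence $h$ is continuous on $(0,\infty)$ with $h(t)\to0$.

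Finally I would argue by contradiction. Since $\xi-\ln\xi\ge1$ for every $\xi>0$, we have $E(t)\ge|\Omega|$ for all $t>0$, and $E(t)<\infty$ for each $t>0$ because $n(\cdot,t)$ is continuous and strictly positive on the compact set $\overline\Omega$; moreover the differential inequality gives $E'(t)\le h(t)-\mu\int_\Omega(n-1)^2$. If $\ell:=\liminf_{t\to\infty}\int_\Omega(n(\cdot,t)-1)^2$ were positive, one could fix $t_1\ge1$ so large that both $\int_\Omega(n(\cdot,t)-1)^2\ge\frac\ell2$ and $h(t)\le\frac{\mu\ell}4$ hold for all $t\ge t_1$, whence $E'(t)\le-\frac{\mu\ell}4$ on $(t_1,\infty)$ and thus $E(t)\to-\infty$, contradicting $E(t)\ge|\Omega|$. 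Therefore $\liminf_{t\to\infty}\int_\Omega(n-1)^2=0$, i.e.\ there is a sequence $t_k\to\infty$ along which $\|n(\cdot,t_k)-1\|_{L^2(\Omega)}\to0$, which is \eqref{4.30}.

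The crux is the second step: the degeneracy $\phi(0)=0$ forces $v$ itself to decay to $0$, so the weight $\frac1v$ in $\int_\Omega\frac{|\nabla v|^2}{v}$ deteriorates, and the only way I see to nevertheless obtain decay of this integral is to trade the \emph{uniform} bound on the more singular functional $\int_\Omega\frac{|\nabla v|^4}{v^3}$ (Lemma \ref{lemma4.6}) against the decay $\int_\Omega v\to0$ (Lemma \ref{lemma4.8}). A minor but genuine caveat is that $E(0)$ may be $+\infty$ when $n_0$ vanishes somewhere in $\overline\Omega$, so the contradiction argument should be run on $[1,\infty)$, where Lemma \ref{lemma2.1} yields a positive lower bound for $n$.
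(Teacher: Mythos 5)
Your proposal is correct and follows essentially the same route as the paper: the same test function $1-\frac1n$ yielding the entropy inequality with dissipation $\mu\int_\Omega(n-1)^2$, and the same H\"older estimate $\int_\Omega\frac{|\nabla v|^2}{v}\le\bigl(\int_\Omega\frac{|\nabla v|^4}{v^3}\bigr)^{1/2}\bigl(\int_\Omega v\bigr)^{1/2}$ combined with Lemmas \ref{lemma4.6} and \ref{lemma4.8} to show the right-hand side decays. The only (harmless) difference is in the last step, where the paper integrates the inequality over $(t_1,t_2)$ and extracts a sequence from the smallness of time averages, while you reach the same $\liminf$ conclusion by contradiction using the lower bound $\xi-\ln\xi\ge1$.
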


\begin{proof}[Proof] By the H\"{o}lder inequality, we have
\begin{equation*}
\begin{split}
\int_\Omega \frac {|\nabla v|^2}v
&\leq
  \left\{ \int_\Omega\frac {|\nabla v|^4}{v^3}\right\}^{\frac 12}
\left\{\int_\Omega  v \right\} ^{\frac 12}.
\end{split}
\end{equation*}
 It follows from \eqref{4.14} and \eqref{4.24} that
\begin{equation}\label{4.31}
\begin{split}
\displaystyle\lim_{t\rightarrow \infty}
\int_\Omega \frac {|\nabla v(\cdot,t)|^2}{v(\cdot,t)} =0.
\end{split}
\end{equation}

On the other hand, testing the first equation in \eqref{1.1} by $1-n^{-1}$, we get
\begin{equation*}
\begin{split}
\frac d{dt}\int_\Omega(n-1-\ln n)
&=\int_\Omega (1-\frac 1 n)
(\Delta (n\phi(v))-u\cdot\nabla n)- \mu\int_\Omega(1- n)^2\\
&=-\int_\Omega\frac{\nabla n}{n^2}
(\phi(v) \nabla n+\phi'(v)n \nabla v )- \mu\int_\Omega(1- n)^2\\
&\leq -\frac12\int_\Omega \frac{\phi(v)} {n^2}|\nabla n|^2+ \frac12
\int_\Omega \frac{|\phi'(v)|^2} {\phi(v)} |\nabla v|^2- \mu\int_\Omega(1- n)^2\\
&\leq
\frac{\Lambda^2(K)}{2\lambda(K)}\int_\Omega \frac {|\nabla v|^2}v - \mu\int_\Omega(1- n)^2,
\end{split}
\end{equation*}
by Lemma \ref{lemma2.2}, the Young inequality and $\nabla\cdot u=0$,
and hence
\begin{equation}\label{4.32}
\begin{split}
\frac d{dt}\int_\Omega(n-1-\ln n)
+ \mu\int_\Omega(n-1)^2\leq
\frac{\Lambda^2(K)}{2\lambda(K)}\int_\Omega \frac {|\nabla v|^2}v.
\end{split}
\end{equation}
Thanks to the nonnegativity of the function $x-1-\ln x $ for all $x>0$,  integrating  \eqref{4.32} over $(t_1,t_2)$
leads to
\begin{equation}\label{4.33}
\begin{split}
&\frac{1}{t_2-t_1}\int^{t_2}_{t_1}\int_\Omega(n-1)^2\\
\leq &\frac{1}{\mu(t_2-t_1)} \int_\Omega(n(\cdot,t_1)-1-\ln n(\cdot,t_1))
+
\frac{1}{t_2-t_1}\frac{\Lambda^2(K)}{\mu\lambda(K)}\int^{t_2}_{t_1} \int_\Omega \frac {|\nabla v|^2}v
\\
\leq& \frac{1}{\mu(t_2-t_1)} \int_\Omega(n(\cdot,t_1)-1-\ln n(\cdot,t_1))
+
\frac{\Lambda^2(K)}{\mu\lambda(K)}\sup_{s\in (t_1,t_2)} \int_\Omega \frac {|\nabla v(\cdot,s)|^2}{v(\cdot,s)}\\
\end{split}
\end{equation}
for all $0<t_1<t_2$, Now, for any given $\varepsilon>0$, by \eqref{4.31}, there exists $t_3>0$ such that for all $t>t_3$,
$$\frac{\Lambda^2(K)}{\mu\lambda(K)}\sup_{t\geq t_3} \int_\Omega \frac {|\nabla v(\cdot,t)|^2}{v(\cdot,t)}< \frac \varepsilon 2.
$$
Thereafter, one can pick $t_4>t_3+1$ sufficiently large such that
$$\frac{1}{\mu(t_4-t_3)} \int_\Omega(n(\cdot,t_3)-1-\ln n(\cdot,t_3))< \frac \varepsilon 2.
$$
Hence from \eqref{4.33}, we have
\begin{equation*}
\begin{split}
\frac{1}{t_4-t_3}\int^{t_4}_{t_3}\int_\Omega(n-1)^2
< \varepsilon\end{split}
\end{equation*}
and then get $\tilde{t}_3\in (t_3,t_4)$ such that
\begin{equation}
\int_\Omega(n(\cdot,\tilde{t}_3)-1)^2
< \varepsilon.
\end{equation}
Proceeding as above, we can find a sequence $\{\tilde{t}_k\}_{k\geq 3}$ fulfilling
$\tilde{t}_k\rightarrow \infty$ as $k\rightarrow \infty$ and
\begin{equation*}
\int_\Omega(n(\cdot,\tilde{t}_k)-1)^2< \varepsilon,
\end{equation*}
which implies \eqref{4.30} and completes the proof.
\end{proof}

\begin{lemma}\label{lemma4.10}
Let the assumption of Lemma 4.7 hold. Then  we have
 \begin{equation}\label{4.35}
\displaystyle\lim_{ \overline{t\rightarrow \infty}} \|u(\cdot,t)\|_{W^{1,2}(\Omega)}= 0.
\end{equation}
\end{lemma}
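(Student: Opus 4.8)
The plan is to run the standard Navier--Stokes energy identity for $u$, slave the decay of its dissipation to that of $n-1$, and then read off the stated convergence along a time sequence by a Poincar\'e argument. First I would test the third equation of \eqref{1.1} against $u$. Since $\nabla\cdot u=0$ and $u|_{\partial\Omega}=0$, the convective term drops out and the Helmholtz projection annihilates gradients, so $\int_\Omega u\cdot\nabla\Phi=0$ and the buoyancy forcing may be recentred as $\int_\Omega n\,u\cdot\nabla\Phi=\int_\Omega(n-1)\,u\cdot\nabla\Phi$. Invoking the Poincar\'e inequality $\int_\Omega|u|^2\le C_p\int_\Omega|\nabla u|^2$ together with Young's inequality to absorb half of the dissipation, this yields the scalar inequality $\frac{d}{dt}\int_\Omega|u|^2+\int_\Omega|\nabla u|^2\le C_p\|\nabla\Phi\|_{L^\infty(\Omega)}^2\int_\Omega(n-1)^2$ valid on $(0,\infty)$.

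Next I would show that this forcing decays in a running time-average sense, which is precisely the information underlying Lemma \ref{lemma4.9}. Recall from the derivation of \eqref{4.32} that $F(t):=\int_\Omega(n-1-\ln n)\ge 0$ obeys $F'(t)+\mu\int_\Omega(n-1)^2\le\frac{\Lambda^2(K)}{2\lambda(K)}\int_\Omega\frac{|\nabla v|^2}{v}$, whose right-hand side tends to $0$ as $t\to\infty$ by \eqref{4.31}. Integrating over $(t_0,T)$ and using $F\ge0$ gives $\mu\int_{t_0}^T\int_\Omega(n-1)^2\le F(t_0)+\int_{t_0}^T\frac{\Lambda^2(K)}{2\lambda(K)}\int_\Omega\frac{|\nabla v|^2}{v}$; dividing by $T$ and letting $T\to\infty$, the Ces\`aro average of a null function vanishes, so $\frac1T\int_0^T\int_\Omega(n-1)^2\to0$. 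Feeding this into the time integral of the energy inequality from the previous paragraph, I obtain $\frac1T\int_0^T\int_\Omega|\nabla u|^2\to0$. In particular $\liminf_{t\to\infty}\int_\Omega|\nabla u(\cdot,t)|^2=0$, so there is a sequence $t_k\to\infty$ with $\|\nabla u(\cdot,t_k)\|_{L^2(\Omega)}\to0$; since $\|u\|_{W^{1,2}(\Omega)}^2\le(1+C_p)\|\nabla u\|_{L^2(\Omega)}^2$ by Poincar\'e, this is exactly the assertion $\lim_{\overline{t\to\infty}}\|u(\cdot,t)\|_{W^{1,2}(\Omega)}=0$.

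The crux --- and the reason the conclusion, like that for $n$ in Lemma \ref{lemma4.9}, is formulated in the limit-superior-along-a-sequence sense denoted by $\lim_{\overline{t\to\infty}}$ --- is the passage through a running time-average. The entropy inequality \eqref{4.32} bounds $F'$ only from above, and the entropy production $\int_\Omega(n-1)^2$ is not known to be time-integrable, because the singularly weighted dissipation $\int_\Omega\frac{|\nabla v|^2}{v}$, while vanishing, need not lie in $L^1(0,\infty)$ under the signal-degeneracy; consequently one controls only the running average $\frac1T\int_0^T\int_\Omega(n-1)^2$ and not the local windows $\int_{t-1}^t\int_\Omega(n-1)^2$, which is insufficient to force the exponentially weighted forcing integral in the variation-of-constants formula for $\int_\Omega|u|^2$ to vanish pointwise in $t$. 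Were genuine (local, hence pointwise) decay of $\|n-1\|_{L^2(\Omega)}$ available, one could first conclude $\|u(\cdot,t)\|_{L^2(\Omega)}\to0$ and then upgrade to $\|u(\cdot,t)\|_{W^{1,2}(\Omega)}\to0$ through the uniform bound $\|A^\beta u(\cdot,t)\|_{L^2(\Omega)}\le C$ of Lemma \ref{lemma4.6} and the interpolation $\|A^{1/2}u\|_{L^2}\le C\|u\|_{L^2}^{1-\frac1{2\beta}}\|A^\beta u\|_{L^2}^{\frac1{2\beta}}$ with $\theta=\frac1{2\beta}\in(\tfrac12,1)$; absent that, the along-a-sequence conclusion recorded above is the natural and, under the present regularity, the sharp outcome.
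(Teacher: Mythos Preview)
Your argument is correct and follows essentially the same line as the paper's: energy-test the Navier--Stokes equation against $u$, recentre the buoyancy to $(n-1)u\cdot\nabla\Phi$, use Poincar\'e and Young to obtain $\frac{d}{dt}\int_\Omega|u|^2+c\int_\Omega|\nabla u|^2\le C\int_\Omega(n-1)^2$, then feed in the running-average decay of $\int_\Omega(n-1)^2$ coming from \eqref{4.32} and \eqref{4.31} and extract a sequence along which $\|u\|_{W^{1,2}}\to0$. The only slip is a citation in your closing commentary: the uniform bound $\|A^\beta u\|_{L^2}\le C$ is recorded in Lemma~\ref{lemma4.5} (via Lemmas~\ref{lemma4.4} and~\ref{lemma3.7}), not in Lemma~\ref{lemma4.6}.
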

\begin{proof}[Proof]
Testing the third equation in \eqref{1.1} against $u$, by $\nabla\cdot u=0$ and
$\int_\Omega u\cdot((u\cdot\nabla)u)=0$, we have
\begin{equation*}
\begin{split}
\frac1 2\frac d{dt}\int_\Omega|u|^2+\int_\Omega|\nabla u|^2&=
\int_\Omega (n-1)u\cdot\nabla\Phi\\
&\leq
\|\nabla\Phi\|_{L^\infty}
\|n-1\|_{L^2(\Omega)} \|u\|_{L^2(\Omega)}.
\end{split}
\end{equation*}
Due to $u|_{\partial\Omega}=0$,  the Poincar\'{e} inequality implies the existence of some $C_1>0$ such that
\begin{equation*}
\int_\Omega|u|^2\leq C_1\int_\Omega|\nabla u|^2.
\end{equation*}
By the Young inequality, we have
\begin{equation}\label{4.36}
\begin{split}
\frac d{dt}\int_\Omega|u|^2+C_2\int_\Omega|\nabla u|^2+ C_2\int_\Omega| u|^2 \leq
C_3\|n-1\|^2_{L^2(\Omega)}
\end{split}
\end{equation}
for some $C_2>0$ and $C_3>0$.

Integrating  \eqref{4.36} over $(t_1,t_2)$ and using \eqref{4.33}, we arrive at
\begin{equation}\label{4.37}
\begin{split}
&\frac{1}{t_2-t_1}\int^{t_2}_{t_1}\|u(\cdot,s)\|^2_{W^{1,2}(\Omega)}ds\\
\leq & \frac{C_4}{t_2-t_1} \int_\Omega |u(\cdot,t_1)|^2
+
\frac{C_4}{t_2-t_1} \int^{t_2}_{t_1} \int_\Omega (n-1)^2
\\
\leq &
\frac{C_4}{t_2-t_1} \int_\Omega |u(\cdot,t_1)|^2
+
\frac{C_5}{t_2-t_1} \int_\Omega(n(\cdot,t_1)-1-\ln n(\cdot,t_1))
+
C_5 \sup_{s\geq t_1} \int_\Omega \frac {|\nabla v(\cdot,s)|^2}{v(\cdot,s)}
\end{split}
\end{equation}
for some $C_4>0$ and $C_5>0$.
Finally, thanks to \eqref{4.31}, one can arrive at \eqref{4.35} by the argument in Lemma \ref{lemma4.9}.
\end{proof}

\vspace{.5cm}

\it {Proof of Theorem 1.1 (ii).}\rm   \quad  The existence of global classical solutions and their boundedness have been shown in Lemma \ref{lemma4.5}.  The large-time behavior of the solutions  as claimed in Theorem 1.1 (ii) has been proven in Lemmas \ref{lemma4.8}--\ref{lemma4.10}.

\vspace{.7cm}

{\bf Conflict of interest}:
No potential conflict of interest is reported by the authors.

{\bf Ethics approval}:
Ethics approval is not required for this research.

{\bf Funding}: The authors are grateful to the referee for detailed and
insightful comments, which led to significant improvements to this paper.
This work was partially supported by the National Natural Science Foundation of China (No.~12071030, 12271186  third author, No.~12371444 first author) and NUS Academic Research Fund (no.~A-0004282-00-00; second and third authors). The second author also acknowledges the hospitality of the Fields Institute for Research in Mathematical Sciences in Toronto where part of this research was carried out.

{\bf Data availability statement}:
All data that support the findings of this study are included within the article (and any supplementary
files).


\begin{thebibliography}{00}
\bibitem{Amann}
 H.~Amann, Dynamic theory of quasilinear parabolic systems III. Global existence, Math. Z., 202(1989), 219--250.

\bibitem{Cao}
X.~Cao, J.~Lankeit,  Global classical small-data solutions for a three-dimensional chemotaxis Navier-Stokes system involving matrix-valued sensitivities, Calc. Var. Part. Differ. Equ., 55(2016), 107.

\bibitem{Friedman}
 A.~Friedman,
 Partial Differential Equations, Holt, Rinehart and Winston, New York, 1969.

\bibitem{FTLH}
 X.~Fu, L.~H.~Tang, C.~Liu, J.~D.~Huang, T.~Hwa, P.~Lenz, Stripe formation in bacterial systems with density-suppresses motility,
  Phys. Rev. Lett., 108(2012), 198102.

\bibitem{Fuest}
M.~Fuest, M.~Winkler,
Uniform Lp estimates for solutions to the inhomogeneous 2D Navier-Stokes equations and application to a chemotaxis-fluid
system with local sensing,  J. Math. Fluid Mechanics, 26(2024), 40.

 \bibitem{Fujie}
 K.~Fujie, J.~Jiang, Comparison methods for a Keller-Segel-type model of pattern formations with density-suppressed motilities, Calc. Var. Partial Differential Equations, 60(2021), 92.

\bibitem{FM}
D.~Fujiwara,  H.~Morimoto,
 An $L^r$-theorem of the Helmholtz decomposition of vector fields,  J. Fac.
Sci., Univ. Tokyo, Sect. I A,  24(1977), 685--700.

\bibitem{Heihoff}
F.~Heihoff,
Two new functional inequalities and their application to the
eventual smoothness of solutions to a chemotaxis-Navier-Stokes system with rotational flux, SIAM J. Math. Anal., 55(2023), 7113--7154.

\bibitem{Jiang}
J.~Jiang, H.~Wu, S.~Zheng,
Global existence and asymptotic behavior of solutions to a chemotaxis-fluid system on general bounded domains,
Asymptotic Analysis, 92(2015), 249--258.

\bibitem{HYJin}
H.-Y.~Jin,
Boundedness and large time behavior in a two-dimensional Keller-Segel-Navier-Stokes system
with signal-dependent diffusion and sensitivity, Discrete Contin. Dyn. Syst., 38(2018), 3595--3616.

\bibitem{JinWang}
H.-Y.~Jin, Y.-J.~Kim, Z.-A.~Wang, Boundedness, stabilization, and pattern formation driven by density suppressed motility,
SIAM J. Appl. Math., 78(2018), 1632--1657.

\bibitem{JinPAMS}
 H.-Y.~Jin, Z.-A.~Wang, Critical mass on the Keller-Segel system with signal dependent motility, Proc. Amer. Math. Soc.,
 148(2020), 4855--4873.

\bibitem{PL}
P. Laurencot,  Long term spatial homogeneity for a chemotaxis model with local sensing and consumption,
Commun. Math. Sci. 21(6)(2023), 1743-1750.

\bibitem{Leyva}
J.~F.~Leyva, C.~M\'{a}laga, R.~G.~Plaza, The effects of nutrient chemotaxis on bacterial aggregation patterns with nonlinear degenerate cross diffusion, Phys. A Stat. Mech. Appl., 392(2013), 5644--5662.

\bibitem{Ligenglin}
G.~Li, Large-data global existence in a higher-dimensional doubly degenerate nutrient system, J. Differ.
Equ., 329(2022), 318--347.

\bibitem{LiLou}
G.~Li, L.~Lou,  Roles of density-related diffusion and signal-dependent motilities in a chemotaxis-consumption system,  Calc. Var. PDE., 63(2024), 195.

\bibitem{LiWinkler}
G.~Li, M.~Winkler, Relaxation in a Keller-Segel-consumption system involving signal dependent motilities,
 Commun. Math. Sci., 21(2023),  299--322.

 \bibitem{Liu}
C.~Liu, et al.,
 Sequential establishment of stripe patterns in an expanding cell population, Science, 334(2011), 238.

\bibitem{Matsushita}
M.~Matsushita, H.~Fujikawa, Diffusion-limited growth in bacterial colony formation, Physica A, 168(1990), 498--506.

 \bibitem{PWang}
 P.~Y.~H.~Pang, Y.~Wang,
  Global boundedness of solutions to a chemotaxis-haptotaxis model with tissue remodeling, Math. Models  Methods  Appl. Sci., 28(11)(2018), 2211--2235.

\bibitem{TaoM3AS}
Y.~Tao, M.~Winkler, Effects of signal-dependent motilities in a Keller-Segel-type reaction-diffusion system, Math.
Models Methods Appl. Sci., 27(2017), 1645--1683.

 \bibitem{TaoJDE}
Y.~Tao, M.~Winkler,
 Global solutions to a Keller-Segel-consumption system involving singularly signaldependent motilities in domains of arbitrary dimension. J. Diff. Equations, 343(2023), 390--418.

 \bibitem{Tian}
 Y.~Tian, M.~Winkler,
 Keller-Segel-Stokes interaction involving signal-dependent motilities,   Math. Meth. Appl. Sci., 46(2023), 15667--15683.

\bibitem{Tuval(2005)}
 I.~Tuval, et al.,
  Bacterial swimming and oxygen transport near contact
lines, Proc. Natl. Acad. Sci. USA, 102(2005), 2277--2282.

\bibitem{Liangwang}
L.~Wang, Global solutions to a chemotaxis consumption model involving signal-dependent degenerate diffusion and logistic-type dampening,  arXiv:2304.02915

\bibitem{WinklerCPDE}
M.~Winkler,
 Global large-data solutions in a chemotaxis-(Navier-)Stokes system modeling cellular swimming in fluid drops, Commun. Partial Differential Equations, 37(2012), 319-351.

\bibitem{WinklerTAMS}
M.~Winkler,
 How far do chemotaxis-driven forces influence regularity in the Navier-Stokes system, Trans. Amer. Math. Soc., 369(2017), 3067--3125.

\bibitem{Winkler(2020)}
M.~Winkler,
 Small-mass solutions in the two-dimensional Keller--Segel system coupled to the Navier--Stokes equations, SIAM J. Math. Anal.,  52(2020), 2041--2080.

\bibitem{WinklerTAMS1}
M. Winkler, Does spatial homogeneity ultimately prevail in nutrient taxis systems? A paradigm for
structure support by rapid diffusion decay in an autonomous parabolic flow,  Trans. Amer. Math. Soc., 374(2021),
219--268.

\bibitem{WinklerNARWA}
 M.~Winkler,
 Small-signal solutions of a two-dimensional doubly degenerate taxis system modeling bacterial motion in
nutrient-poor environments, Nonlinear Anal. RWA, 63(2022), 103407.

\bibitem{WinklerDCDSB(2022)}
M.~Winkler,
 Approaching logarithmic singularities in quasilinear chemotaxis--consumption systems with
 signal-dependent sensitivities, Discrete Contin. Dyn. Syst. Ser. B, 27(2022), 6565--6587.

\bibitem{WinklerNonlinearity}
 M.~Winkler,
Stabilization despite pervasive strong cross-degeneracies in a nonlinear
diffusion model for migration-consumption interaction, Nonlinearity, 36(2023), 4438--4469.

\bibitem{WinklerADE}
 M.~Winkler,
Absence of collapse into persistent Dirac-type singularities in a Keller-Segel-Navier-Stokes system involving local sensing,
Adv. Diff. Equations, 28(2023), 921--952.

\bibitem{WinkleIMRN}
 M.~Winkler,
   A result on parabolic gradient regularity in Orlicz spaces and application to absorption-induced blowup prevention in a Keller-Segel-type cross-diffusion system, Int. Math. Res. Not., 2023(19)(2023), 16336--16393.

\bibitem{WinklerZAMP}
M.~Winkler, Global generalized solvability in a strongly degenerate taxis-type parabolic system modeling
migration-consumption interaction, Z. Angew. Math. Phys., 74(2023), 32.

\bibitem{WinklerJDE2024}
M.~Winkler, $L^\infty$ bounds in a two-dimensional doubly degenerate nutrient taxis system with general cross-diffusive flux, J. Diff. Equations, 400(2024), 423--456.

\bibitem{Winkleradv}
M.~Winkler, A strongly degenerate migration-consumption model in domains of arbitrary dimension,  Adv. Nonlinear Stud., 24(2024),
592.

\bibitem{WinklerAIHP}
 M.~Winkler,
 A quantitative strong parabolic maximum principle and application to a taxis-type migration-consumption model involving signal-dependent degenerate diffusion, Ann. Inst. H. Poincar\'{e}  Anal. Non Lin\'{e}aire, 41(2024), 95--127.

\bibitem{WinklerM3AS}
M. Winkler, Effects of degeneracies in taxis-driven evolution, Math. Models Meth. Appl. Sci., 35(2)(2025), 283--343.

\bibitem{Winklerpreprint}
 M.~Winkler,
  $L^p$  bounds in the two-dimensional Navier-Stokes system
and application to blow-up suppression in chemotaxis-fluid systems accounting for local sensing, preprint.

\bibitem{XiaoJDE}
Y.~Xiao, J.~Jiang,
Global existence and uniform boundedness in a fully parabolic Keller--Segel system with non-monotonic signal-dependent motility
J. Differential Equations, 354(2023), 403--429.

\bibitem{Xiao}
Y.~Xiao, J.~Jiang,
  Prevention of infinite-time blowup by slightly super-linear degradation in a Keller-Segel system with density-suppressed motility, Nonlinearity, 37(2024), 095007.

\end{thebibliography}
\end{document}